\documentclass[english]{amsart}

\usepackage[all,cmtip]{xy}

\usepackage{amsmath,amssymb,amscd,amsfonts}

\usepackage{babel}
\usepackage{color}
\usepackage{amstext}
\usepackage{amsmath}
\usepackage{amsfonts}
\usepackage{latexsym}
\usepackage{ifthen}
\usepackage{xypic}
\usepackage[all,cmtip]{xy}
\xyoption{all}
\pagestyle{plain}

\renewcommand{\P}{\mathbb{P}}
\newcommand{\R}{\mathbb{R}}
\newcommand{\CC}{\mathbb{C}}

\newcommand{\N}{\mathbb{N}}

\newcommand{\cE}{\mathcal{E}}

\newcommand{\cG}{\mathcal{G}}
\newcommand{\cI}{\mathcal{I}}

\newcommand{\cF}{\mathcal{F}}
\newcommand{\cK}{\mathcal{K}}
\renewcommand{\O}{\mathcal{O}}

\newcommand{\ep}{\varepsilon}
\renewcommand{\epsilon}{\varepsilon}

\newcommand{\ol}{\overline}

\renewcommand{\ge}{\geqslant}

\renewcommand{\leq}{\leqslant}
\renewcommand{\geq}{\geqslant}

\newcommand{\End}{\mathrm{End}}
\newcommand{\wh}{\widehat}
\newcommand{\wt}{\widetilde}

\newcommand{\ddc}{dd^c}

\newcommand{\Id}{\mathrm {Id}}
\newcommand{\codim}{\mathrm {codim}}

\newcommand{\Sym}{\mathrm{Sym}}
\newcommand{\Div}{\mathrm{Div}}

\newcommand{\pr}{\mathrm{pr}}

\newcommand{\dbar}{\bar \partial}

\newtheorem{thm}{Theorem}[section]
\newtheorem{lemme}[thm]{Lemma}
\newtheorem{remark}[thm]{Remark}
\newtheorem{proposition}[thm]{Proposition}
\newtheorem{question}[thm]{Question}

\newtheorem{defn}[thm]{Definition}
\newtheorem{cor}[thm]{Corollary}
\newtheorem{claim}[thm]{Claim}
\title{Subharmonicity of direct images and applications}
\author[]{Fr\'ed\'eric CAMPANA, Junyan CAO, Mihai P{A}UN}

\address{Universit\'e de Lorraine\\  
  Institut de Math\'ematiques \'Elie Cartan, \\
  B.P. 70239, 54506 Vandoeuvre-les-Nancy  Cedex, France}
\email{frederic.campana@univ-lorraine.fr}

\address{Université Côte d'Azur, Laboratoire de Mathématiques J.A. Dieudonné, Parc Valrose  06108 Nice cedex 02, France}
\email{junyan.cao@unice.fr}

\address{Universit\"at Bayreuth, Mathematisches Institut, Lehrstuhl Mathematik VIII, Universit\"atsstrasse 30, D-95447, Bayreuth, Germany}
\email{mihai.paun@uni-bayreuth.de}

\begin{document}

\maketitle

\section{Introduction and main results}

\noindent The positivity properties of direct images are at the heart of many recent
developments in complex geometry. In the present article our main goal is to explore further some of these topics. Actually the main results we obtain here concern the \emph{subharmonicity properties of direct images}. As a consequence we answer partially to a conjecture formulated by Pereira-Touzet, cf. \cite{PeTou} and the references therein. We start with the application as follows.
\smallskip

\noindent Let $X$ be a compact K\"ahler manifold whose canonical bundle
$K_X$ is pseudo-effective. Let $\cF\subset T_X$ be a holomorphic foliation
such that its first Chern class $c_1(\cF)$ is trivial. Then Pereira-Touzet show (cf. \cite{PeTou}) that $\cF$ is non-singular, and they conjecture that if moreover $c_2(\cF)\neq 0$  and
$\cF$ is stable with respect to some K\"ahler metric $\omega_X$ on $X$, then
the leaves of $\cF$ are algebraic.
\medskip

\noindent In this article we obtain the following particular case of their conjecture.
\begin{thm}\label{thm001}
Let $(X, \omega_X)$ be a smooth projective manifold. Let $\cF\subset T_X$ be a holomorphic foliation such that the following hold.
\begin{enumerate}

\item[(i)] The first Chern class of $\cF$ is zero, i.e. $c_1(\cF)= 0$ and $c_2(\cF)\neq 0$.
  \smallskip

\item[(ii)] The sheaf $\wh{\Sym}^k\cF^\star$ is $\omega_X$-stable for some $k\geq r$, where $r$ is the rank of $\cF$. 
\end{enumerate}  
Then the leaves of $\cF$ are algebraic. 
\end{thm}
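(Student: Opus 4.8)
\medskip

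\noindent\textbf{Proof proposal.}
My plan is to combine the classical theory of stable bundles with the structure theory of foliations whose canonical bundle is numerically trivial, the decisive analytic input being this paper's positivity (subharmonicity) of direct images, and to use the stability hypothesis (ii) to kill the only transcendental possibility that survives. First I would observe that, since $K_X$ is pseudo-effective, Pereira--Touzet's theorem gives that $\cF$ is an honest subbundle of $T_X$, so $\widehat{\Sym}^k\cF^\star=\Sym^k\cF^\star$ is a vector bundle with $c_1=0$. A proper saturated subsheaf of $\cF^\star$ of nonnegative slope would produce, via (the image of) its $k$-th symmetric power, a proper subsheaf of $\Sym^k\cF^\star$ of nonnegative slope, contradicting (ii); hence $\cF^\star$ — and therefore $\cF$ — is $\omega_X$-stable. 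By the Donaldson--Uhlenbeck--Yau theorem $\cF$ carries a Hermitian--Einstein metric, whence the Bogomolov inequality gives $c_2(\cF)\cdot[\omega_X]^{n-2}\ge 0$, with equality only if $\cF$ is projectively flat; but a projectively flat bundle with $c_1=0$ has $c_2=0$ in $H^4(X,\R)$, contradicting (i). Thus
\[
c_2(\cF)\cdot[\omega_X]^{n-2}>0 .
\]

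Next, suppose for contradiction that the general leaf of $\cF$ is not algebraic. Here I would invoke the structure theory of foliations with numerically trivial canonical class (Pereira--Touzet, Loray--Pereira--Touzet, Druel): after a finite étale Galois cover $\gamma\colon X_1\to X$ the pulled-back foliation splits as a direct sum
\[
\gamma^{\bullet}\cF=\bigoplus_{i}\cF_i ,
\]
each $\cF_i$ being either algebraically integrable or ``of toral type'' (a pull-back of a linear foliation on an abelian variety); in the second case $\cF_i$ is a flat — indeed trivial — subbundle, so $c_2(\cF_i)=0$, and the partition of the summands by type is intrinsic, hence invariant under $\mathrm{Gal}(X_1/X)$. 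This is where the work lies, and where this paper's subharmonicity of direct images is meant to be essential: it should be used to run the structure theory under the merely numerical hypothesis $c_1(\cF)=0$, by converting the numerical triviality of $K_\cF=\det\cF^\star$ together with the pseudo-effectivity of $K_X$ into positively curved — hence, once $c_1=0$ is used, flat — metrics on the relevant direct images along the family of leaves, and into the description of the algebraically integrable summands as fibrations with fibres of torsion canonical class. I expect this step to be the main obstacle.

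Finally, since $\cF$ is $\omega_X$-stable, $\gamma^{\bullet}\cF$ is polystable and $\mathrm{Gal}(X_1/X)$ permutes its isotypic components transitively; together with the Galois-invariance of the type partition this forces all the $\cF_i$ to be of the same type. If they are all of toral type, then $\gamma^{\bullet}\cF$ is flat, so $c_2(\gamma^{\bullet}\cF)=0$ and hence $c_2(\cF)=0$ in $H^4(X,\R)$, contradicting the first paragraph. Therefore all the $\cF_i$ are algebraically integrable, hence so is $\gamma^{\bullet}\cF$; since algebraicity of leaves is unaffected by finite covers, the leaves of $\cF$ are algebraic, contrary to our assumption. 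This contradiction proves the theorem. Everything outside the second paragraph — Bogomolov's inequality, the Galois descent, the Chern-class bookkeeping — is formal; the whole weight of the argument rests on the structure-theoretic step, which is precisely what the positivity of direct images is designed to supply.
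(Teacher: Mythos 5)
Your route is genuinely different from the paper's, and the second paragraph — which you yourself flag as ``the main obstacle'' — is a real gap, not a technical loose end. The structure theorem you want to invoke (after a finite \'etale cover, a foliation with $c_1(\cF)=0$ splits into summands that are either algebraically integrable or ``of toral type'') is not available in the generality needed here. The works you cite (\cite{LPT}, \cite{Dr18}, \cite{HP19}) establish such decompositions only under substantially stronger hypotheses ($K_X$ numerically trivial, or a Calabi--Yau-type setting, or extra structure on $\cF$), and the paper's own introduction explicitly lists them as \emph{particular cases} of the Pereira--Touzet conjecture. In the setting of Theorem~\ref{thm001} — a projective manifold with no global hypothesis on $K_X$, only the stability of $\widehat{\Sym}^k\cF^\star$ — no such dichotomy is known; producing it is essentially equivalent to the theorem you are trying to prove. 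You also do not say how the subharmonicity of direct images would yield it; you only express the hope that it should.

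The paper's actual argument sidesteps the structure theory entirely. It combines two ingredients. First, the algebraicity criterion of \cite{CP19}: if the foliation $\cF\subset T_X$ is \emph{not} algebraic, then the tautological bundle $\cO(1)$ on $\P(E)$ is pseudo-effective. Second, the sheaf-theoretic Theorem~\ref{thm1proj}: if $\cO(1)$ is pseudo-effective, $c_1(\cF)=0$, and $\widehat{\Sym}^k\cF^\star$ is $\omega_X$-stable for some $k\ge r$, then $\cF$ is Hermitian flat. The proof of the latter is where the analysis lives: one takes a positively curved singular metric $e^{-\varphi}$ on $\cO(1)$, uses the Mehta--Ramanathan restriction theorem to pass to a general complete-intersection curve $C$ where $\Sym^k\cF^\star|_C$ is stable and carries a flat Hermite--Einstein metric, and compares the degree of the direct image $p_\star\bigl((K_{\P(\cF)/C}+\cO(k+r))\otimes\cI(e^{-\psi_L})\bigr)$ against that of $\Sym^k\cF^\star|_C$. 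Positivity of direct images forces the multiplier ideal $\cI(e^{-(k+1-\ep)\varphi}|_{\P(E)_x})$ to be trivial for generic $x$ (Lemma~\ref{trivia}); feeding this back through Corollary~\ref{pushep} and Theorem~\ref{weakpositivv} shows $\cF^\star$ is numerically flat, and then stability plus \cite[Thm 1.18]{DPS94} gives Hermitian flatness. The chain for Theorem~\ref{thm001} is then: not algebraic $\Rightarrow$ $\cO(1)$ psef $\Rightarrow$ $\cF$ Hermitian flat $\Rightarrow$ $c_2(\cF)=0$, contradicting (i). Your first and third paragraphs (stability of $\cF^\star$ from stability of $\Sym^k\cF^\star$; $c_2\neq0$ rules out flatness) are correct and are consistent with the paper's bookkeeping, but the Bogomolov-inequality detour is unnecessary — flat already implies $c_2=0$. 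The step you should replace the structure theory with is precisely the \cite{CP19} criterion together with Lemma~\ref{trivia}; that is where this paper's positivity of direct images is actually used, and it does not pass through any decomposition of the foliation.

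One smaller remark: your opening appeal to Pereira--Touzet's regularity of $\cF$ uses ``$K_X$ pseudo-effective'', which is part of their conjecture but is not among the hypotheses of Theorem~\ref{thm001}. The paper avoids needing $\cF$ to be a subbundle by working throughout with the resolution $E=\pi^\star\cF$ mod torsion on a modification $\widehat X$ and with $\widehat{\Sym}^k$, so it is not entitled — and does not need — to assume $\cF$ is locally free.
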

\noindent We refer to \cite{HP19}, \cite{Dr18}, \cite{LPT} and the references therein for other particular cases of the
aforementioned conjecture and its relevance in the context of singular
Calabi-Yau manifolds.
\smallskip

\noindent Now the main new ingredient in the proof of Theorem \ref{thm001} is
Theorem \ref{thm1} below that
we next discuss. We recall that if $\cF$ is any torsion free, coherent sheaf on $X$
then there exists a modification $\pi: \wh X\to X$ such that the inverse image
$\pi^\star(\cF)$ modulo torsion is a vector bundle which we denote by $E$. Let
$\O(1)\to \P(E)$ be the (dual of the) tautological bundle over $\P(E)$
(our convention here is that a point of $\P(E)$ corresponds to a line
of the fiber of $E$ at some point of $X$).
\smallskip

\noindent Then we have the following statement.

\begin{thm}\label{thm1}
Let $(X, \omega_X)$ be a compact K\"ahler manifold, and let $\cF$ be a reflexive sheaf such that the following hold.
\begin{enumerate}

\item[(i)] The first Chern class of $\cF$ is zero, i.e. $c_1(\cF)= 0$.
  \smallskip

\item[(ii)] The sheaf $\widehat{\Sym}^k\cF^\star$ is $\omega_X$-stable for some $k\geq r +1$, where $r$ is the rank of $\cF$. 
\end{enumerate}  
Then the bundle $\O(1)$ on $\P(E)$ is not pseudo-effective, or $\cF$ is Hermitian flat. 
Moreover, if $X$ is projective then we can derive the same conclusion provided that
$\widehat{\Sym}^k\cF^\star$ is $\omega_X$-stable for some $k\geq  r$ instead of $\rm (ii)$.
\end{thm}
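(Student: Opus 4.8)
The plan is to establish the contrapositive: assuming that $\cO(1)$ on $\P(E)$ is pseudo-effective, I will show that $\cF$ is Hermitian flat. There are two inputs. The first is the subharmonicity/positivity of direct images — the technical engine developed earlier in the paper — which I would use to convert a psef metric on $\cO(1)$ into a positively curved singular Hermitian metric on $\wh\Sym^k\cF^\star$. The second is the Bando–Siu existence theorem, which, from the stability hypothesis, produces a Hermitian–Einstein metric on $\wh\Sym^k\cF^\star$ with vanishing Einstein factor. Comparing the two metrics then forces flatness, first of $\wh\Sym^k\cF^\star$ and then of $\cF$ itself. Concretely, fix a singular metric $h$ on $\cO(1)$ with $i\Theta_h\ge 0$, and let $p\colon\P(E)\to\wh X$ be the projection; with our conventions $K_{\P(E)/\wh X}=\cO(-r)\otimes p^\star(\det E)^{-1}$, so that
\[
p_\star\big(\cO(k+r)\otimes K_{\P(E)/\wh X}\big)\;=\;\Sym^kE^\star\otimes(\det E)^{-1},
\]
which, after applying $\pi_\star$ and taking the reflexive hull, becomes $\wh\Sym^k\cF^\star$ — here $c_1(\cF)=0$ is what lets one absorb the determinant twist, whose first Chern class is supported on $\Exc(\pi)$ and maps to zero in $H^2(X,\R)$. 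Feeding the positively curved metric $h^{\otimes(k+r)}$ on $\cO(k+r)$ into the direct‑image machinery yields a singular Hermitian metric $H$ with semipositive curvature on $\wh\Sym^k\cF^\star$. The requirement $k\ge r+1$ is exactly what preserves one unit of fibrewise positivity of $\cO(k+r)\otimes K_{\P(E)/\wh X}$ (so that its restriction to a general fibre $\P^{r-1}$ behaves well and the direct image is the expected sheaf); when $X$ is projective a twist by a small ample class on $X$ provides this extra unit, so $k\ge r$ then suffices.

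Next I would invoke Bando–Siu: since $c_1(\cF)=0$, the sheaf $\wh\Sym^k\cF^\star$ is stable reflexive with vanishing first Chern class, hence carries an admissible Hermitian–Einstein metric $G$, smooth off an analytic set of codimension $\ge 3$, with $\Lambda_{\omx}i\Theta_G=0$. Now comes the crux, comparing $H$ and $G$. The determinant $\det H$ is a psef singular metric on the line bundle $\det\wh\Sym^k\cF^\star$, whose first Chern class is trivial, so its curvature current — closed, positive, with zero integral against $\omx^{n-1}$ — vanishes identically; the same holds for $\det G$. Writing $s=G^{-1}H$ for the resulting $G$-self-adjoint, $G$-positive endomorphism on the common smooth locus $X_0$, the function $\det s$ is then pluriharmonic, hence constant. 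Plugging $i\Theta_H\ge 0$ and $\Lambda_{\omx}i\Theta_G=0$ into the Bochner identity for $\Lambda_{\omx}i\ddbar\log\tr_G H$ — equivalently, running the maximum principle fibrewise on $p\colon\P(E)\to\wh X$ for the difference between the potential of $h$ and that of the Fubini–Study metric attached to $G$, which is the subharmonicity phenomenon of the title — I would conclude that $\log\tr_G H$ is $\omx$-subharmonic and bounded on $X_0$, extends across the codimension-$\ge 2$ complement, and is therefore constant; the equality case in the Bochner inequality then forces $i\Theta_G\equiv 0$, i.e. $\wh\Sym^k\cF^\star$ is Hermitian flat.

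It remains to descend to $\cF$. First, stability of $\wh\Sym^k\cF^\star$ forces $\cF$ to be $\omx$-stable: a saturated $\cG\subsetneq\cF$ with $\mu(\cG)\ge\mu(\cF)=0$ would yield, via $(\cF/\cG)^\star\hookrightarrow\cF^\star$ and symmetric powers, a subsheaf of $\wh\Sym^k\cF^\star$ of strictly positive slope, while a proper Jordan–Hölder quotient of $\cF^\star$ of slope $0$ would give a proper slope-$0$ quotient of $\wh\Sym^k\cF^\star$; either contradicts stability. By Bando–Siu, $\cF$ carries an admissible Hermitian–Einstein metric $G_\cF$ with $\Lambda_{\omx}i\Theta_{G_\cF}=0$, and $\Sym^kG_\cF$ is a Hermitian–Einstein metric on $\wh\Sym^k\cF^\star$; by uniqueness of the Hermitian–Einstein metric on a stable sheaf, $\Sym^kG_\cF$ agrees up to a constant with the flat metric of the previous step, so its curvature vanishes, and since the differential of $\Sym^k$ is injective for $k\ge1$ this forces $i\Theta_{G_\cF}\equiv 0$. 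Hence $\cF$ is Hermitian flat.

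The hard part, and the place where $c_1(\cF)=0$ and the sharp bound $k\ge r+1$ (resp. $k\ge r$) are genuinely consumed, is the passage in the first two paragraphs from the bare pseudo-effectivity of $\cO(1)$ — with no fibrewise positivity to spare and with the modification $\pi$ obstructing a direct argument — to an honest positively curved metric on $\wh\Sym^k\cF^\star$, together with the ensuing "squeezing" of that metric against the Bando–Siu metric so as to extract genuine Hermitian flatness rather than merely a semipositive/Einstein coincidence; everything else is formal.
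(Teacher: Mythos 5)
There is a genuine gap, and it sits exactly at the point you yourself flag as ``the hard part'': the passage from pseudo-effectivity of $\cO(1)$ to a positively curved singular Hermitian metric $H$ on $\widehat{\Sym}^k\cF^\star$. The direct-image machinery does not give this for free. Theorem \ref{push} produces a positively curved $L^2$ metric only on the subsheaf $p_\star\bigl((K_{\P(E)/\wh X}+\cO(k+r))\otimes\cI(h^{\otimes(k+r)})\bigr)$, and this coincides with the full expected sheaf $\Sym^kE^\star\otimes\det E^\star$ (hence yields a genuine singular metric on it) \emph{only if} the multiplier ideal of the psef metric restricted to the generic fiber is trivial; otherwise the natural $L^2$ metric is identically $+\infty$ on the missing sections and no metric $H$ on $\widehat{\Sym}^k\cF^\star$ is obtained. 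Proving precisely this fibrewise triviality, $\cI\bigl(e^{-(k+1-\ep)\varphi}|_{\P(E)_x}\bigr)=\cO_{\P(E)_x}$ for generic $x$, is the core of the paper's argument (Lemma \ref{trivia} and its K\"ahler version), and it is there --- not in your comparison with Bando--Siu --- that the stability of $\widehat{\Sym}^k\cF^\star$ is consumed: one shows that a nontrivial fibrewise multiplier ideal would produce a \emph{proper} subsheaf of $\Sym^kE^\star\otimes\det E^\star$ of non-negative (limiting) slope, via Mehta--Ramanathan restriction to curves plus the flat Hermite--Einstein metric in the projective case, and via approximate Hermite--Einstein metrics (Bando--Siu/Toma) combined with the new determinant-positivity statement, Theorem \ref{thm01}, in the K\"ahler case, where no restriction theorem is available. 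Your proposal never addresses the fibrewise singularities of $h$ at all, so the construction of $H$ --- and with it the whole subsequent Bochner/comparison argument --- has no foundation; in particular you never use Theorem \ref{thm0}/\ref{thm01}, which is the new ingredient that makes the K\"ahler case work.

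Relatedly, your explanation of where $k\ge r+1$ (resp.\ $k\ge r$ projective) enters is not correct: $\cO(k+r)\otimes K_{\P(E)/\wh X}\cong\cO(k)\otimes p^\star\det E^\star$ is fibrewise ample for every $k\ge1$, so there is no issue of ``preserving one unit of fibrewise positivity''. The bound comes from the strength of the multiplier-ideal statement: the lemma gives triviality of $\cI\bigl(e^{-(k+1-\ep)\varphi}\bigr)$ on generic fibers, and one needs exponent at least $r+1$ on $\varphi$ in order to realize $\cF^\star$ itself as $p_\star\bigl(K_{\P(E)/X}+\cO(r+1)\bigr)$ carrying a positively curved metric (hence $k+1-\ep\ge r+1$, i.e.\ $k\ge r+1$); in the projective case the slack $k\ge r$ is recovered by twisting with $h_0^{\ep}$ for a smooth metric $h_0$ with $\Theta_{h_0}\ge-p^\star\omega_X$ and invoking Theorem \ref{weakpositivv}, which is a projective statement. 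Finally, once the multiplier ideal is known to be trivial, your endgame is more complicated than necessary: the paper concludes directly by applying Theorem \ref{push} to get a positively curved singular metric on $\cF^\star$ and then Proposition \ref{regsh} (positively curved plus $c_1=0$ forces flatness), with no need for the $\log\tr_G H$ Bochner comparison, whose boundedness and regularity claims for a singular $H$ would themselves require justification.
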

\medskip

\noindent Unlike the other articles dedicated to these topics, our methods here are
relying on new positivity results for direct images that we are now introducing.
\smallskip

\noindent Let $p:Y\to X$ be a holomorphic proper map between two
K\"ahler manifolds, and let $(L, h_L)$ be a Hermitian line bundle on the
total space $Y$. The metric $h_L$ could be singular, but we assume that
\begin{equation}\label{intr01}
\Theta_{h_L}(L)\geq 0
\end{equation}
in the sense of currents on $Y$. In this context it is then established that the direct image sheaf
\begin{equation}\label{intr02}
\cE:= p_\star\left((K_{Y/X}+ L)\otimes \mathcal I(h_L)\right)
\end{equation}
is semi-positively curved in the sense of Griffiths when endowed with the natural $L^2$ metric denoted by $h_{Y/X}$ cf. \cite{HPS}, \cite{PT}.
\medskip

\noindent The next result we are presenting here concerns the positivity of $\det \cE$ as follows.
\begin{thm}\label{thm0}
Let $p:Y\to X$ be a holomorphic surjective and proper map, where $X$ and $Y$ are K\"ahler manifolds. Moreover, we assume that $p$ is locally projective. Let $(L, h_L)$ be a Hermitian line bundle over $Y$ such that \eqref{intr01} holds true, and such that
\begin{equation}\label{intr030}  
\Theta_{h_L}(L)\wedge p^\star\omega_X^{n-1}\geq \ep_0 p^\star\omega_X^{n}
\end{equation}
where $\omega_X$ is a Hermitian metric on $X$, $n=\dim X$, and $\ep_0> 0$ is a positive real number. Then we have
\begin{equation}\label{intr040}
\Theta_{\det h_{Y/X}}\left(\det(\cE)\right)\wedge \omega_X^{n-1}\geq r\ep_0\omega_X^{n} 
\end{equation}
where $r$ is the rank of the direct image $\cE$.
\end{thm}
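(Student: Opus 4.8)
The plan is to reduce the statement to a fiberwise computation of the curvature of $\det h_{Y/X}$ and then integrate against $p^\star\omega_X^{n-1}$. The starting point is the known fact (recalled before \eqref{intr02}) that $\cE=p_\star\big((K_{Y/X}+L)\otimes\cI(h_L)\big)$ is Griffiths-semipositive for the $L^2$-metric $h_{Y/X}$; in particular $\det\cE$ is semipositively curved, so the left-hand side of \eqref{intr040} already makes sense as a current and is $\ge 0$. The issue is to get the quantitative lower bound $r\ep_0\,\omega_X^n$. Since both sides of \eqref{intr040} are $(n,n)$-forms on $X$, it suffices to establish the inequality pointwise at a general point $x_0\in X$, over which $p$ is smooth (after shrinking $X$, by local projectivity, we may assume $p$ is projective). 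I would work on a small coordinate ball $U\ni x_0$ and produce a global holomorphic section, or rather use the standard formula for the curvature of a determinant bundle in terms of the curvature of $\cE$ together with the second fundamental form.

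The key step is the following. Choose a local holomorphic frame $e_1,\dots,e_r$ of $\cE$ over $U$ and let $H=(h_{i\bar j})$ with $h_{i\bar j}=\langle e_i,e_j\rangle_{h_{Y/X}}$. Then $\Theta_{\det h_{Y/X}}(\det\cE)=-\ddc\log\det H$, and the Griffiths curvature of $\cE$ is the $(1,1)$-form with values in $\mathrm{End}(\cE)$ computed by the Berndtsson–type formula: for a section $u$ of $\cE$, restricting to the fiber $Y_x=p^{-1}(x)$, one has the representation $u\mapsto \int_{Y_x}$ of a density built from $u\wedge\bar u$ and the weight $h_L$. The crucial input is \eqref{intr030}: contracting the curvature current $\Theta_{h_L}(L)$ of the line bundle on $Y$ with $p^\star\omega_X^{n-1}$ gives at least $\ep_0\,p^\star\omega_X^n$ fiberwise-integrated. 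Concretely, I would trace the Griffiths curvature of $h_{Y/X}$ against $\omega_X$: the Berndtsson formula expresses $\Tr_{\omega_X}\Theta_{h_{Y/X}}(\cE)$ as a fiber integral whose integrand dominates the integrand coming from $\Theta_{h_L}(L)\wedge p^\star\omega_X^{n-1}\ge\ep_0 p^\star\omega_X^n$ (the harmonic-projection/positivity correction terms all have a favorable sign once $\Theta_{h_L}(L)\ge0$ is in force). This yields, for every unit-norm local section,
\[
\langle \Theta_{h_{Y/X}}(\cE)\wedge\omega_X^{n-1}\,\cdot\, ,\,\cdot\,\rangle \ \ge\ \ep_0\,\omega_X^n\cdot \mathrm{Id},
\]
i.e. each eigenvalue of $\Theta_{h_{Y/X}}(\cE)\wedge\omega_X^{n-1}$ (relative to $\omega_X^n$) is $\ge\ep_0$.

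Taking the trace over the $r$-dimensional fiber of $\cE$ then gives
\[
\Theta_{\det h_{Y/X}}(\det\cE)\wedge\omega_X^{n-1}
= \Tr\big(\Theta_{h_{Y/X}}(\cE)\big)\wedge\omega_X^{n-1}
\ \ge\ r\,\ep_0\,\omega_X^n,
\]
which is exactly \eqref{intr040}. To make this rigorous across the locus where $p$ is not smooth and where $h_L$ is singular, I would either (a) invoke that $\det\cE$ with $\det h_{Y/X}$ extends as a psh-type metric and that the inequality, being an inequality of positive $(n,n)$-currents, propagates from a Zariski-dense open set by continuity/regularization, or (b) run the argument with a regularized sequence $h_{L,\delta}$ and pass to the limit. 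The main obstacle I anticipate is precisely this last point: controlling the $L^2$-metric and its curvature near the degenerate fibers and near $\mathrm{Sing}(h_L)$, i.e. justifying that the pointwise fiberwise curvature estimate, valid over the good locus, indeed gives the claimed current inequality globally on $X$. The Griffiths-positivity input from \cite{HPS}, \cite{PT} handles the semipositivity across the bad locus; what needs care is that the strict lower bound $\ep_0$ is not lost in the limit, for which the uniform hypothesis \eqref{intr030} on all of $Y$ (not just on the generic fiber) is exactly what is needed.
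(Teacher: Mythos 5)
Your reduction to the case $h_L$ smooth and $p$ submersive is exactly how the paper opens the discussion, and the argument you sketch there (Berndtsson's formula \cite[(4.8)]{Ber09} gives a fiber integral dominating $\ep_0\|u\|^2\omega_X^n$, then trace over $\cE$) is correct and is also the paper's observation. But this is precisely the part that the authors dismiss in one paragraph before the real proof begins; the entire content of Theorem \ref{thm0} lies in the singular case, and there your proposal has a genuine gap.

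The difficulty you flag at the end -- ``controlling the $L^2$-metric and its curvature near $\mathrm{Sing}(h_L)$'' -- is not a matter of ``continuity/regularization'' of currents on $X$ or of regularizing $h_L$ and passing to the limit. Two things go wrong with that. First, when $h_{Y/X}$ is singular, $\Theta_{h_{Y/X}}(\cE)$ does not exist as an $\mathrm{End}(\cE)$-valued current that you can diagonalize and trace; what is available (from \cite{HPS}, \cite{PT}) is only the psh property of $\log|\sigma|^2_{h_{Y/X}^\star}$ for holomorphic sections $\sigma$ of $\cE^\star$. The paper therefore replaces your ``each eigenvalue $\geq \ep_0$'' with the weaker dual statement, Claim 4.1, namely
\begin{equation*}
dd^c\log|\sigma|^2_{h_{Y/X}^\star}\wedge\omega_X^{n-1}\ \geq\ \ep_0\,\omega_X^n,
\end{equation*}
and then passes from this to \eqref{intr040} not by a trace but by the projectivization trick of \cite{HPS}: the claim is applied \emph{twice}, first to $\cE^\star$ to obtain positivity of the induced metric on $\cO(1)\to\P(\cE^\star)$, and then to the direct image $\pi_\star(K_{\P(\cE^\star)/X}+\cO(r))\cong\det\cE$. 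Second, you cannot simply regularize $h_L$ while retaining both $\Theta_{h_L}(L)\geq 0$ and the contraction hypothesis \eqref{intr030}: the latter is not preserved by an arbitrary smoothing, since $\omega_X$ is only Hermitian. The paper's actual proof of Claim 4.1 is the hard technical part: it reduces by local projectivity and the Bergman-kernel extremal characterization from \cite{BP08} to a product map $U\times V\to U$ with $V$ Stein, covers $X$ by small balls on which $\omega_X$ is $(1+\delta_\ep)$-comparable to a \emph{flat} metric $\omega_i$, regularizes $\phi$ by convolution along a Stein retraction $\mu:W\to D^{c+1/2}_{t_0}$ \emph{in those flat coordinates} so that \eqref{intr030} survives the smoothing up to $(1\pm\delta_\ep)$ factors (this is the only reason the Hermitian hypothesis on $\omega_X$ is manageable), introduces the auxiliary weights $\psi_j$ to compare Bergman kernels on the sweeping domains $D_t^c$, and finally carries out the explicit second-order computation of $\ddc B_t\langle a,a\rangle\wedge\omega^{n-1}$, controlled by H\"ormander estimates and the pointwise identity for \eqref{sh35}--\eqref{sh37}. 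None of this is captured by options (a) or (b) in your last paragraph, and without it the quantitative bound $r\ep_0$ is not established.

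In short: your smooth-case argument is right and agrees with the paper, but it is the easy observation, not the proof. Where the theorem is actually proved -- the dualization via $\cE^\star$ and $\P(\cE^\star)$, and the careful local Bergman-kernel computation that survives the singularities of $h_L$ and the Hermitian (non-K\"ahler) nature of $\omega_X$ -- your proposal only gestures at the problem without an argument.
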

\medskip

\begin{remark}{\rm If we replace the hypothesis \eqref{intr030} with the
following
\begin{equation}\label{intr03007}
\Theta_{h_L}(L)\geq \ep_0p^\star\omega_X,
\end{equation}
then it is well-known that the curvature of $(\cE, h_{Y/X})$ is greater than
$\displaystyle \ep_0\omega_X\otimes \Id_{\End(\cE)}$. Thus our statement \ref{thm0}
can be seen as a stronger version of this result.
}
\end{remark}
\smallskip

\noindent Very roughly, the proof for Theorem \ref{thm0} goes 
as follows: by techniques due to \cite{Ber06}, \cite{BP08}, we show that it would be enough to show the inequality \eqref{intr040} 
in the particular case of map $p: U\times V\to U$ which is simply the projection
on the first factor, where $U$ is an open ball in some Euclidean space and $V$ is a Stein manifold. In this case the statement to prove looks quite different from  \eqref{intr040} and it will be established by a (long and) direct computation.
\smallskip

\noindent An immediate consequence of this statement is the following.
\begin{thm}\label{thm01}
  Let $p:Y\to X$ be a holomorphic surjective and proper map, where $X$ and $Y$ are K\"ahler manifolds. Moreover, we assume that $p$ is locally projective. Let $(L, h_L)$ be a Hermitian holomorphic line bundle over $Y$ such that
\begin{equation}\label{intr03}  
\Theta_{h_L}(L)\geq -Cp^\star\omega_X, \qquad \Theta_{h_L}(L)\wedge p^\star\omega_X^{n-1}\geq 0
\end{equation}
where $\omega_X$ is a Hermitian metric on $X$, $n=\dim X$, and $C> 0$ is a positive real number. Then we have
\begin{equation}\label{intr04}
\Theta_{\det h_{Y/X}}\left(\det(\cE)\right)\wedge \omega_X^{n-1}\geq 0, 
\end{equation}
where $r$ is the rank of the direct image $\cE$.
\end{thm}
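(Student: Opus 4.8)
The plan is to obtain Theorem~\ref{thm01} from Theorem~\ref{thm0} by localizing over the base and twisting $h_L$ by the pull-back of a well-chosen potential from $X$. It suffices to prove that for every $\ep>0$ one has $\Theta_{\det h_{Y/X}}(\det(\cE))\wedge\omega_X^{n-1}\ge-\ep\,\omega_X^n$ as currents on $X$, and then let $\ep\to0$; and since positivity of currents is a local property it is in turn enough to establish this on a neighbourhood of an arbitrary point $x_0\in X$. Fix such an $x_0$ and choose holomorphic coordinates $z=(z_1,\dots,z_n)$ centred at $x_0$ so that $\omega_X$ and $\beta:=\ddc|z|^2$ agree at $x_0$; by continuity there is a small ball $U=U(x_0)$ on which $\omega_X\le(1+\delta)\beta$ and $(1-\delta)\,\omega_X^n\le\beta\wedge\omega_X^{n-1}\le(1+\delta)\,\omega_X^n$, where $\delta>0$ may be taken as small as we please.

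On $Y_U:=p^{-1}(U)$ equip $L$ with the metric $h_Le^{-p^\star\psi}$, where $\psi:=C(1+\delta)|z|^2$, so that $\ddc\psi=C(1+\delta)\beta\ge C\omega_X$ on $U$. Using the two assumptions in \eqref{intr03} together with the inequalities defining $U$,
\[
\Theta_{h_Le^{-p^\star\psi}}(L)=\Theta_{h_L}(L)+C(1+\delta)p^\star\beta\ \ge\ -Cp^\star\omega_X+C(1+\delta)p^\star\beta\ \ge\ 0,
\]
\[
\Theta_{h_Le^{-p^\star\psi}}(L)\wedge p^\star\omega_X^{n-1}\ \ge\ C(1+\delta)\,p^\star\!\big(\beta\wedge\omega_X^{n-1}\big)\ \ge\ C(1+\delta)(1-\delta)\,p^\star\omega_X^n.
\]
Thus the surjective proper locally projective map $p\colon Y_U\to U$ between K\"ahler manifolds, together with the line bundle $\big(L|_{Y_U},\,h_Le^{-p^\star\psi}\big)$, satisfies the hypotheses of Theorem~\ref{thm0} with $\ep_0:=C(1+\delta)(1-\delta)>0$.

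It remains to express the conclusion of Theorem~\ref{thm0} in terms of the original data. Since $p^\star\psi$ is locally bounded on $Y_U$ one has $\cI(h_Le^{-p^\star\psi})=\cI(h_L)$, so the direct image occurring in Theorem~\ref{thm0} is exactly $\cE|_U$; and since $p^\star\psi$ is constant (equal to $\psi(x)$) along each fibre $Y_x$, the fibrewise $L^2$ norms satisfy $\|u\|^2_{h_Le^{-p^\star\psi},\,x}=e^{-\psi(x)}\|u\|^2_{h_L,\,x}$, hence the $L^2$ metric on $\cE|_U$ equals $e^{-\psi}h_{Y/X}$ and $\Theta_{\det(e^{-\psi}h_{Y/X})}(\det\cE)=\Theta_{\det h_{Y/X}}(\det\cE)+r\,\ddc\psi$. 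Feeding this into \eqref{intr040} gives, on $U$,
\[
\Theta_{\det h_{Y/X}}(\det\cE)\wedge\omega_X^{n-1}\ \ge\ rC(1+\delta)(1-\delta)\,\omega_X^n-rC(1+\delta)\,\beta\wedge\omega_X^{n-1}\ \ge\ -2rC\delta(1+\delta)\,\omega_X^n,
\]
the last inequality using $\beta\wedge\omega_X^{n-1}\le(1+\delta)\omega_X^n$. For a prescribed $\ep>0$ one chooses $\delta$ — and hence $U$ — small enough that $2rC\delta(1+\delta)\le\ep$; this gives the desired estimate near $x_0$ and finishes the argument.

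The only genuinely delicate point is the identification in the third paragraph: twisting $h_L$ by the pull-back $e^{-p^\star\psi}$ of a weight from the base multiplies the induced $L^2$ metric on $\cE$ by the \emph{scalar} $e^{-\psi}$, so that the curvature of $\det\cE$ is corrected by precisely $r\,\ddc\psi$ — which is exactly what makes it cancel the additional term $r\ep_0\,\omega_X^n$ produced by Theorem~\ref{thm0}. The remaining ingredients — invariance of the multiplier ideal sheaf under a locally bounded twist, the admissibility of applying Theorem~\ref{thm0} over the non-compact base $U$, and the treatment of an arbitrary Hermitian (not necessarily K\"ahler) metric $\omega_X$ via coordinates normalizing it at the centre $x_0$ — are routine.
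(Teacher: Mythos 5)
Your argument is correct and essentially the same as the paper's: both proofs localize over the base, twist $h_L$ by $e^{-p^\star\psi}$ with $\ddc\psi\ge C\omega_X$, apply Theorem~\ref{thm0}, and exploit that the twist changes $\Theta_{\det h_{Y/X}}(\det\cE)$ by exactly $r\,\ddc\psi$, which cancels the gain $r\ep_0\omega_X^n$ from Theorem~\ref{thm0} up to an $O(\delta)$ error. The paper organizes this in two stages (first $\omega_X$ K\"ahler with an exact local potential, then the general Hermitian case via a covering by nearly flat balls), whereas you merge them into a single pass by normalizing $\omega_X$ at a point and tracking the $\delta$-error explicitly — a minor reorganization, not a different argument.
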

\smallskip

\noindent A last result we mention here is the following statement, cf. Theorem \ref{bermganprop}, section 5.

\begin{thm}\label{thm01nancy}
Let $p:Y\to X$ be a holomorphic surjective and proper map, where $X$ and $Y$ are K\"ahler manifolds. 
Moreover, we assume that $p$ is locally projective. Let $L$ be a line bundle over the total space $Y$ endowed with a possible singular metric $h_L$ such that  $\Theta_{h_L} (L)\geq 0$, and 
\begin{equation}\label{cor030nancy}  
\Theta_{h_L}(L)\wedge p^\star\omega_X^{n-1}\geq \ep_0p^\star\omega_X^{n}
\end{equation}
where $\omega_X$ is a Hermitian metric on $X$, $n=\dim X$, and $\ep_0> 0$ is a positive real number. We assume that the space of fiberwise $\displaystyle L^{\frac{2}{m}}$ sections (with respect to $h_L$) of
$p_\star (m K_{Y/X} +L)$ is non zero. Then there exists a metric $h$ on the
bundle $m K_{Y/X} +L$ such that we have
\begin{equation}\label{fiberversion}
\Theta_{h} (m K_{Y/X} +L) \wedge p^\star \omega_X ^{n-1} \geq \ep_0 \cdot p^\star \omega_X ^n 
\end{equation}
in the sense of currents on $Y$.
\end{thm}
\smallskip

\begin{remark}{\rm 
We see that Theorem \ref{thm01nancy} has the same flavor as the usual results concerning direct images: the positivity of the curvature of $(L, h_L)$ induces similar properties for the twisted pluricanonical bundle 
$mK_{Y/X}+ L$. More precisely, if $\Theta_{h_L}(L)\geq 0$ then we already know that the bundle 
$m K_{Y/X} +L$
is pseudo-effective. We prove here that the additional positivity requirement \eqref{cor030nancy} is inherited by the twisted pluricanonical bundle of the map $p$.}
\end{remark}

\medskip

\noindent {\bf Organization of the paper.} The remaining part of this article will unfold as follows. In the first section we recall a few technical statements which are playing an important role
in our arguments. Then as a \emph{warm-up}, we prove Theorem \ref{thm1}
in the projective case; the main ideas are as follows.
Assume that $\O(1)$ is pseudo-effective. Then it admits a singular metric
$e^{-\varphi}$ whose curvature is semi-positive.
The stability condition (ii) implies that the multiplier ideal
$$\cI(e^{-(r+1-\ep)\varphi}|_{\P(E^\star _x)})$$ is trivial for any $\ep >0$, and for all $x\in X$
generic. In order to prove this we are using the positivity of direct images,
combined with the restriction theorem of Mehta-Ramanathan.
Now if the multiplier ideal above is trivial, then we can construct a singular Hermitian metric on $\cF^\star$ with positive curvature in the sense of Griffiths. Together with the fact that $c_1(\cF)= 0$, this implies that
$\cF$ is Hermitian flat by a result in \cite{CP17}.

\noindent The general case of Theorem \ref{thm1} is much more subtle,
since one has to find
an alternative argument in order to compensate the absence of
Mehta-Ramanathan theorem. It is at this point that Theorem \ref{thm01} comes into the picture. Thanks to this result we can still analyze the singularities of the
metric on $\O(1)$ as discussed above.
\smallskip

\noindent On the other hand, Theorem \ref{thm0} is interesting in its own right: it can be seen as very precise analysis of the positivity properties of the determinant of direct images. 
Further results and applications of this statement (and its proof)
are discussed in the last section of this article. We can interpret them
as a first step towards the analysis of the positivity of direct images on currents of bi-dimension $(1,1)$.
\medskip

\noindent {\bf Acknowledgement.} We would like to thank B. Berndtsson, A. Höring and M. Toma
for useful discussions and suggestions about the topics in this article.

\section{Stable reflexive sheaves and singular Hermitian metrics}

\noindent In this preliminary section we collect a few definitions and
properties concerning the
stability of reflexive sheaves on compact K\"ahler manifolds, as well as the notion of singular Hermitian metric on a vector bundle.

\subsection{Stable reflexive sheaves} We will mostly follow the article by Bando-Siu, cf.~\cite{BS94}, combined with important clarifications communicated to us by M.~Toma, cf. \cite{T19}.
\smallskip

\noindent Let $X$ be a compact K\"ahler manifold, and let $\cF$ be a reflexive
subsheaf. Then there exists a sequence of blow-up maps of non-singular centers
whose composition is denoted by
\begin{equation}\label{stab1}
\pi: \widehat X\to X
\end{equation}
such that the inverse image $\pi^\star (\cF)$ modulo torsion becomes a vector bundle denoted by $E\to \widehat X$ in \cite{BS94}.

\noindent Let $\omega$ be a K\"ahler form on $X$. Then the inverse image
$\wh \omega:= \pi^\star(\omega)$ \emph{is not} K\"ahler in general, but nevertheless
the class corresponding to $\wh \omega^{n-1}$ is movable. In this context, we have the following result cf. e.g. \cite{GKP1}.
\begin{lemme}\label{gkp}The sheaf $\cF$ is stable with respect to $\omega$ if and only if 
$E$ is stable with respect to $\wh \omega$.
\end{lemme}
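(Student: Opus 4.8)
The plan is to reduce the statement to a comparison of slopes, exploiting that $\pi$ is a biholomorphism outside an analytic set of codimension $\geq 2$ together with the projection formula. Recall that $\cF$ is $\omega$-stable iff $\mu_\omega(\G) < \mu_\omega(\cF)$ for every saturated subsheaf $\G\subset\cF$ with $0<\rk\G<\rk\cF$, where $\mu_\omega(\G)=\frac{1}{\rk\G}\int_X c_1(\det\G)\wedge\omega^{n-1}$; and similarly $E$ is $\wh\omega$-stable iff the analogous inequality holds for all saturated subsheaves of $E$, the degree now computed against the movable class $\wh\omega^{n-1}=\pi^\star(\omega^{n-1})$. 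That it suffices to test saturated subsheaves, also in the movable setting, is part of the framework of \cite{T19}.

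The first step is that the degree is preserved under modified pullback: if $\G$ is a torsion-free coherent sheaf on $X$ and $\wh\G$ denotes $\pi^\star\G$ modulo torsion, then $\rk\wh\G=\rk\G$ and $\deg_{\wh\omega}(\wh\G)=\deg_\omega(\G)$. Indeed, writing $\det\wh\G=\mathcal{O}_{\widehat X}(\wh D)$, the divisor $\wh D$ differs from $\pi^\star(\det\G)$ only along the exceptional locus, so $\wh D\equiv\pi^\star(\det\G)+\sum_i a_i E_i$ for the exceptional prime divisors $E_i$ and integers $a_i$. Pairing with $\wh\omega^{n-1}$ and using the projection formula gives $\pi^\star(\det\G)\cdot\pi^\star\omega^{n-1}=\deg_\omega(\G)$, while $E_i\cdot\pi^\star\omega^{n-1}=(\pi_\star E_i)\cdot\omega^{n-1}=0$ because each $E_i$ is contracted by $\pi$ to a subvariety of dimension $\leq n-2$ (codimension-one blow-up centers being harmless, we may assume all centers have codimension $\geq 2$). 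This proves the claim, and applied to $\G=\cF$ it yields $\mu_{\wh\omega}(E)=\mu_\omega(\cF)$.

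The second step is to match up the test subsheaves: $\G\mapsto\wh\G$ sets up a bijection between saturated subsheaves of $\cF$ of intermediate rank and saturated subsheaves of $E$ of intermediate rank. Given saturated $\G\subsetneq\cF$, the map $\pi^\star\G\to\pi^\star\cF$ induces, after killing torsion, an inclusion $\wh\G\hookrightarrow E$ (injectivity is clear on the dense open set where $\pi$ is an isomorphism and $\cF$ is locally free, and $\wh\G$ is torsion-free), and there $E/\wh\G$ agrees with $\pi^\star(\cF/\G)$ modulo torsion, hence $\wh\G$ is saturated in $E$. Conversely, since $\cF$ is reflexive and $\pi$ is an isomorphism in codimension $1$, the natural map $\cF\to\pi_\star E$ identifies $\cF$ with $(\pi_\star E)^{\vee\vee}$; thus a saturated $\mathcal{S}\subset E$ of intermediate rank determines, via its generic fibre, a saturated subsheaf $\G\subset\cF$ of the same rank, and $\wh\G$ — being saturated in $E$ with the same generic fibre as $\mathcal{S}$ — coincides with $\mathcal{S}$, because a saturated subsheaf is determined by its generic fibre. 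Combining with Step~1, $\mu_{\wh\omega}(\wh\G)=\mu_\omega(\G)$ for every such pair, so the defining inequality holds for $\G$ in $\cF$ iff it holds for $\wh\G$ in $E$; this gives the equivalence.

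I expect the main obstacle to be the bookkeeping with torsion and saturation in codimension $\geq 2$ in the second step — in particular the identification $\cF=(\pi_\star E)^{\vee\vee}$ and the assertion that saturated subsheaves on either side are detected by their generic fibres — which is precisely where the reflexivity of $\cF$ and the structure of $\pi$ as a sequence of blow-ups along smooth centers of codimension $\geq 2$ enter; the degree computation itself, once one knows $\wh\omega^{n-1}$ is a smooth closed positive form and the projection formula applies, is routine.
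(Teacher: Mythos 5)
The paper does not give its own proof of this lemma; it is stated with the citation ``cf.\ e.g.\ \cite{GKP1}'' and used as a black box. Your two-step reduction --- degree preservation under $\pi^\star$ combined with the correspondence of test subsheaves --- is the standard argument one would find in that reference, so in spirit you are doing what the paper implicitly relies on.

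One small imprecision in Step~2: the claim that $\wh\G$ is automatically saturated in $E$ is not correct in general. If $\cF/\G$ is torsion-free but not locally free along the blow-up centers, then $\pi^\star(\cF/\G)$ can acquire torsion supported on the exceptional divisors (for instance $\pi^\star\cI_q$ has torsion along the exceptional divisor over $q$), and this torsion need not be killed in the quotient $E/\wh\G$. The assertion that a saturated $\mathcal S\subset E$ ``coincides with $\wh\G$'' should therefore be weakened to: $\mathcal S$ is the saturation of $\wh\G$ inside $E$. This does not damage your argument, because $\mathcal S/\wh\G$ is torsion supported on the exceptional locus (the two sheaves agree over the large open set where $\pi$ is an isomorphism, since there $\G$ is saturated in $\cF$), so $\det\mathcal S = \det\wh\G\otimes\cO(D)$ with $D$ exceptional, and $D\cdot\pi^\star\omega^{n-1}=0$ by exactly the projection-formula computation of your Step~1. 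Hence $\deg_{\wh\omega}(\mathcal S)=\deg_{\wh\omega}(\wh\G)=\deg_\omega(\G)$ and the slope comparison is unaffected. With that correction the proof is complete and matches the expected argument.
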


\noindent For the basic facts concerning the stability with respect to a
movable class we refer to \cite{GKP1}.
\medskip

\noindent The important result of Bando-Siu states as follows.
\begin{thm}\cite{BS94}
Let $\cF$ be a reflexive sheaf on a compact K\"ahler manifold, which moreover is
stable with respect to a K\"ahler metric $\omega$. Then $\cF$ has an admissible Hermite-Einstein metric $h_{\cF}$.  
\end{thm}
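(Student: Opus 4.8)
The plan is to follow Bando--Siu \cite{BS94}, with the clarifications of Toma \cite{T19}: work on the Zariski-open set where $\cF$ is locally free, solve a regularized Hermite--Einstein problem there, and control the behaviour near the singular set by means of a uniform $L^2$ bound on the curvature. Concretely, let $S\subset X$ be the analytic subset along which $\cF$ is not locally free; since $\cF$ is reflexive, $\codim_X S\ge 2$, and on $X_0:=X\sm S$ the sheaf $\cF$ is a holomorphic vector bundle of rank $r$. Let $\lambda$ be the slope of $\cF$ with respect to $\om$, suitably normalized. We must construct a smooth Hermitian metric $h_\cF$ on $\cF|_{X_0}$ with $\Lambda_\om F_{h_\cF}=\lambda\,\Id$ (so that $|\Lambda_\om F_{h_\cF}|$ is automatically bounded) and with $\int_{X_0}|F_{h_\cF}|_\om^2\,\om^n<\infty$; this is the definition of an \emph{admissible} Hermite--Einstein metric. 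First I would fix a smooth reference metric $h_0$ on $\cF|_{X_0}$ which near $S$ is induced by a smooth metric on an ambient bundle, so that $F_{h_0}$ is locally $L^2$ near $S$ and the Donaldson functional is finite at $h_0$.

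Next I would run the continuity method of Simpson/Uhlenbeck--Yau in this noncompact setting: exhaust $X_0$ by smoothly bounded domains $\Omega_1\subset\Omega_2\subset\cdots$ and, for $\ep\in(0,1]$, solve on each $\Omega_j$ the perturbed Dirichlet problem $\Lambda_\om F_h-\lambda\,\Id+\ep\log(h_0^{-1}h)=0$ with $h=h_0$ on $\d\Omega_j$, which is solvable by the classical monotonicity argument on a compact manifold with boundary; letting $j\to\infty$ with interior estimates produces a solution $h_\ep$ of $\Lambda_\om F_{h_\ep}-\lambda\,\Id+\ep\log(h_0^{-1}h_\ep)=0$ on all of $X_0$. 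The proof then reduces to three uniform-in-$\ep$ estimates: (a) $\sup_{X_0}|\Lambda_\om F_{h_\ep}|\le C$, via the maximum principle applied to the elliptic inequality for $|\Lambda_\om F_{h_\ep}-\lambda\,\Id|$ together with the fact that $h_0$ is approximately Hermite--Einstein near $S$; (b) $\int_{X_0}|F_{h_\ep}|_\om^2\,\om^n\le C$, from the Chern--Weil identity and monotonicity of the Donaldson functional along the path; and (c) the $C^0$-estimate $\sup_{X_0}|\log(h_0^{-1}h_\ep)|\le C$. Granting (a)--(c), interior Schauder bootstrapping gives uniform $C^\infty_{loc}(X_0)$ bounds on $h_\ep$; a subsequence converges to a smooth metric $h_\cF$ on $\cF|_{X_0}$ solving $\Lambda_\om F_{h_\cF}=\lambda\,\Id$, and Fatou applied to (b) gives finite $L^2$ curvature, so $h_\cF$ is admissible Hermite--Einstein. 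Finally, the extension of $(\cF|_{X_0},h_\cF)$ across $S$ agrees with $\cF$ because a reflexive sheaf is determined by its restriction to the complement of a codimension-two set.

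The hard part will be estimate (c), which is where $\om$-stability enters and where Toma's clarifications are essential. Suppose (c) fails; after renormalizing, $s_\ep:=\log(h_0^{-1}h_\ep)$ has $L^1$-norm tending to infinity while $t_\ep:=s_\ep/\|s_\ep\|_{L^1}$ stays bounded in $L^2_1(X_0)$ --- and it is precisely the $L^2$ curvature bound (b) that makes this Sobolev bound available on the noncompact manifold $X_0$. Passing to a nonzero weak limit $t_\infty$, which is bounded, self-adjoint and has at least two distinct eigenvalues, the Uhlenbeck--Yau argument shows each spectral projection $\pi$ of $t_\infty$ satisfies $\pi=\pi^\star=\pi^2$, lies in $L^2_1$, and obeys $(\Id-\pi)\db\pi=0$, i.e. defines a weakly holomorphic subbundle; by the Uhlenbeck--Yau regularity theorem $\pi$ is the orthogonal projection onto a coherent saturated subsheaf $\cG\subsetneq\cF|_{X_0}$, which extends by Hartogs to a proper subsheaf of $\cF$ on $X$. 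The subtle point --- the one addressed in \cite{T19} --- is that the $L^2$ Chern--Weil computation of $\deg_\om\cG$ through $\pi$ genuinely computes the analytic degree of this extended sheaf, which requires controlling the contribution near $S$; granting this, one obtains $\mu_\om(\cG)\ge\mu_\om(\cF)$, contradicting stability. I expect this destabilizing-subsheaf step, together with the degree bookkeeping near $S$, to be the only genuinely delicate part of the argument, the rest being by now standard elliptic theory for the Hermite--Einstein equation.
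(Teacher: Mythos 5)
This is a result the paper cites from Bando--Siu \cite{BS94} without giving a proof of its own; the surrounding text (the resolution $\pi:\wh X\to X$, the bundle $E=\pi^\star\cF/\mathrm{torsion}$, the perturbed form $\wh\omega_0$, and Toma's theorem on stability under the degree functions \eqref{stab50}) makes clear that the argument the paper leans on is Bando--Siu's original one: pass to a modification $\wh X$ where $\cF$ becomes a genuine vector bundle $E$, check that $E$ remains stable with respect to K\"ahler or Gauduchon perturbations $g_\ep$ of $\pi^\star\omega$ (the very point where the authors invoke \cite{T19} because they could not locate a complete proof in \cite{BS94}), solve the compact Donaldson--Uhlenbeck--Yau problem on $\wh X$ for each $\ep$, and take a degenerating limit $\ep\to 0$. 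Your proposal takes a genuinely different route: you never pass to $\wh X$ at all, but instead exhaust the locally free locus $X_0=X\smallsetminus S$ by compact domains, solve perturbed Dirichlet problems there, and extract limits by uniform interior estimates. This direct noncompact scheme avoids the stability-under-perturbation question on $\wh X$ entirely, which is a real advantage; the price is that the whole weight of the argument falls on uniform $L^2$ curvature and $C^0$ bounds on a noncompact manifold and on the $L^2$ Chern--Weil degree calculation near $S$.

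Two points in your sketch deserve more care than you give them. First, the claim that a reference metric $h_0$ ``induced by a smooth metric on an ambient bundle'' near $S$ has $F_{h_0}\in L^2_{\mathrm{loc}}$ is not automatic: the second fundamental form of $\cF\hookrightarrow\cE$ generically blows up along $S$ even for reflexive $\cF$ (Bando--Siu only get $L^p$ control for $p$ in a restricted range without additional work), and constructing an admissible reference metric is itself one of the delicate parts of the theory, not a free step. Second, your uniform-in-$j$ $C^0$ bound across the exhaustion needs to be disentangled from the stability-dependent $C^0$ bound: the former has to hold before you know stability bites (so that the nonzero weak limit $t_\infty$ exists at all), and on a noncompact $X_0$ the absence of a global maximum principle and of a global Donaldson functional makes this a nontrivial elliptic issue rather than boilerplate. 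Granting those two points, your outline is a coherent account of what must be proved, just by a different path from the one the paper relies on.
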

\medskip

\noindent We do not recall here the notion of \emph{admissible metric}
of a vector bundle because we do not need it; we will rather work with its regularization defined as follows. Let
\begin{equation}\label{stab3}
K_{\wh X/X}= \sum_{i=1}^N e_i E_i  
\end{equation}
be the relative canonical divisor of the map $\pi$. 
Then there exist a set of non-singular representatives
$\theta_i\in c_1(E_i)$ such that the form
\begin{equation}\label{stab2}
\wh\omega_0:= \wh \omega- \ep_0\sum_{i=1}^N\ep_i\theta_i
\end{equation}
is K\"ahler for any positive and small enough $0<\ep_0\ll 1$,
provided that the coefficients $\ep_i$ are carefully chosen.
\smallskip

\noindent By Lemma \ref{gkp} the vector bundle $E$ is stable with respect to
$\wh\omega$. It is stated as a remark in \cite{BS94} that 
 $E$ is stable with respect to the perturbation $\wh\omega_0$ of $\wh\omega$,
As soon as the $\ep_0$ in \eqref{stab2} is small enough. We were not able to find a reference/proof for this assertion (which is most likely true) so we will rather
 use the following result established in the recent article \cite{T19}.
\begin{thm}\label{mtoma}\cite{T19}
The bundle $E$ is stable with respect to the degree function induced by the form
\begin{equation}\label{stab50}
(1-\ep)\wh\omega_0^{n-1}+ \ep\wh\omega^{n-1}
\end{equation}
for any $1\geq \ep\geq 0$, provided that $\ep_0\ll 1$. 
\end{thm}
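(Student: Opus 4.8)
The plan is to deduce the assertion from the single piece of stability information we already have — namely that $E$ is stable with respect to $\wh\omega$, by Lemma \ref{gkp} — together with a boundedness property of subsheaves of the fixed bundle $E$, exploiting the fact that as $\ep_0\to0$ the whole family of classes in \eqref{stab50} collapses onto the single class $\wh\omega^{n-1}$.

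For a coherent subsheaf $\cG\subset E$ of positive rank and a movable class $\alpha$ on $\wh X$ put $\mu_\alpha(\cG):=\frac{1}{\rk\cG}\int_{\wh X}c_1(\cG)\wedge\alpha$; this is the relevant notion of slope for the (movable) classes occurring in \eqref{stab50}, and what must be shown is that $\mu_\alpha(\cG)<\mu_\alpha(E)$ for every saturated subsheaf $\cG$ with $0<\rk\cG<r$ and every $\alpha$ of the form \eqref{stab50}, once $\ep_0$ is small. First I would fix $\ep_0'>0$ small enough that the form \eqref{stab2} is K\"ahler for all $\ep_0\in(0,\ep_0']$, and let $K_0$ be the compact set of all classes $(1-\ep)\wh\omega_0^{n-1}+\ep\wh\omega^{n-1}$ with $\ep\in[0,1]$ and $\ep_0\in[0,\ep_0']$ (for $\ep_0=0$ the class is just $\wh\omega^{n-1}$).

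The hard step — this is where one really invokes \cite{T19} — is the following boundedness statement: the set of saturated subsheaves $\cG\subset E$ with $0<\rk\cG<r$ for which $\mu_\alpha(\cG)\ge\mu_\alpha(E)$ holds for at least one $\alpha\in K_0$ is bounded. Over a projective base this is classical Grothendieck/Maruyama-type boundedness of subsheaves of a fixed sheaf with bounded slope, but on a general compact K\"ahler manifold and for a \emph{movable} polarization it requires the more delicate analysis of \cite{T19}. Granting it, the set of pairs $(\rk\cG,c_1(\cG))$ realized by such $\cG$ is finite, say $\{(s_1,\gamma_1),\dots,(s_m,\gamma_m)\}$, and each $\gamma_j$ is the first Chern class of an actual saturated subsheaf $\cG_j\subset E$ of rank $s_j$.

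Finally I would run a soft continuity and compactness argument. For each $j$ the function $\alpha\mapsto\delta_j(\alpha):=\frac{\gamma_j\cdot\alpha}{s_j}-\frac{c_1(E)\cdot\alpha}{r}$ is linear, hence continuous, and since $E$ is $\wh\omega$-stable and $\cG_j$ is a genuine subsheaf one has $\delta_j(\wh\omega^{n-1})=\mu_{\wh\omega^{n-1}}(\cG_j)-\mu_{\wh\omega^{n-1}}(E)<0$. Consequently the continuous function $(\ep_0,\ep)\mapsto\delta_j\big((1-\ep)\wh\omega_0^{n-1}+\ep\wh\omega^{n-1}\big)$ on $[0,\ep_0']\times[0,1]$, which equals $\delta_j(\wh\omega^{n-1})<0$ on $\{0\}\times[0,1]$, stays negative on $[0,\ep_0'']\times[0,1]$ for some $0<\ep_0''\le\ep_0'$, simultaneously for all $j$. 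Now fix $\ep_0\le\ep_0''$. If some $\alpha$ of the form \eqref{stab50} admitted a saturated destabilizing subsheaf $\cG$, then $\alpha\in K_0$ would force $(\rk\cG,c_1(\cG))=(s_j,\gamma_j)$ for some $j$, and hence $0\le\delta_j(\alpha)<0$, a contradiction. This proves the asserted stability; the one genuine obstacle is the boundedness input from \cite{T19}, everything else being purely formal.
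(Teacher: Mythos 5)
The paper does not actually give a proof of this statement: Theorem~\ref{mtoma} is cited verbatim from \cite{T19}, and the text immediately before it explains that the authors ``were not able to find a reference/proof'' of the Bando--Siu remark and therefore ``rather use the following result established in the recent article \cite{T19}.'' So there is no internal argument in the present paper against which to compare your proposal.

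Your sketch is a sensible reduction of the statement to a \emph{boundedness} input (finiteness of the numerical types of saturated subsheaves that could destabilize for some polarization in the compact family $K_0$), followed by a standard openness-of-stability continuity argument. The continuity and compactness part is correct and purely formal, exactly as you say: the slope difference $\delta_j$ is linear in the class, negative at $\ep_0=0$ by $\wh\omega$-stability (Lemma~\ref{gkp}), and there are only finitely many $j$ to control, so it stays negative uniformly for $\ep_0\le\ep_0''$ and all $\ep\in[0,1]$. The genuine content, however, is precisely the boundedness claim, and you explicitly ``outsource'' that step to \cite{T19} --- which is the very reference the paper cites for the \emph{entire} theorem. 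In other words, your proposal is not an independent proof: it recasts the theorem as an easy consequence of Toma's boundedness results, which is plausibly close to what \cite{T19} actually does, but it does not supply the core argument. Note also that boundedness over the whole closed parameter rectangle (including the degenerate boundary $\ep_0=0$, where $\wh\omega^{n-1}$ is only a movable, non-K\"ahler class) is precisely the nonstandard part on compact K\"ahler base, and would need justification; this is exactly why the paper had to invoke \cite{T19} rather than classical Grothendieck--Maruyama boundedness.
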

\noindent It then follows that
there exists a metric $h_{E, \ep}$ on $E$ which verifies the Hermite-Einstein
condition with respect to $g_\ep^{n-1}$, where $g_\ep$ is the unique Gauduchon metric on $\wh X$ such that
\begin{equation}\label{stab51}
g_\ep^{n-1}= (1-\ep)\wh\omega_0^{n-1}+ \ep\wh\omega^{n-1}.
\end{equation}
Let us fix a coordinate system $(z_j)$ centered at some point $x_0$ of $\wh X$.
We denote by $G_\ep$ the matrix corresponding to 
$g_\ep$ and we define 
\begin{equation}\label{stab611}
(1-\ep)\wh\omega_0^{n-1}+ \ep\wh\omega^{n-1}|_U= (n-1)!(-1)^{\frac{(n-1)^2}{2}}\sum_{\alpha, \beta}(-1)^{\alpha+ \beta}\Omega_{\alpha \ol\beta, \ep}dz_\alpha\wedge d\ol z_{\beta}.
\end{equation}  
We have the formula $\displaystyle \Omega_\ep^tG_\ep= \det(G_\ep)I_n$ which
implies that we have the equality 
$\displaystyle
\det G_\ep= \left(\det\Omega_\ep\right)^{\frac{1}{n-1}}$.
We therefore obtain
\begin{equation}\label{stab512}
\int_{\wh X}dV_{g_\ep}\leq C
\end{equation}
for some constant $C> 0$ independent of $\ep$.  

\medskip

\noindent In conclusion, a reflexive sheaf $\cF$ which is stable with respect to a
K\"ahler metric $\omega$ has an admissible Hermite-Einstein metric $h_\cF$ which is limit of non-singular Hermite-Einstein metrics on a modification $\wh X$ of $X$
with respect to a Gauduchon metric $g_\ep$.

\subsection{Singular Hermitian Metrics and Positivity of Direct Images} 
For the convenience of the reader, we collect here a few basic notions and results concerning the positivity of
push-forward of relative canonical bundles, cf. \cite{PT}
\cite{HPS}, \cite{Pa15} and the references therein. As we will next see, the proof of the projective case of Theorem \ref{thm1} is
relying heavily on them.
\smallskip

\noindent The following notion appeared naturally in \cite{BP08}, and it was subsequently studied in \cite{Rau}, \cite{PT}, \cite{HPS}.
Let $E\to X$ be a holomorphic vector bundle of rank $r$ on a complex manifold $X$.
We denote by
\begin{equation}\label{sHm1}
H_{r}:=\{A=(a_{i\ol j})\}
\end{equation} the set of $r \times r$, semi-positive definite Hermitian matrices.
The manifold $X$ is endowed with the Lebesgue measure. We recall next the following notion.

\begin{defn}\label{sHm20}
A {\it singular Hermitian metric} $h$ on $E$ is given locally by a measurable map with values in 
$H_{r}$ such that 
\begin{equation}\label{sHm30}
0<\det h<+\infty
\end{equation} 
almost everywhere.
\end{defn}

\noindent Let $(E, h)$ be a vector bundle endowed with a 
singular Hermitian metric $h$. Given a {local section} $v$ of $E$, i.e. an element $v \in H^{0}(U,E)$ defined on some open subset $U\subset X$, the function 
$\displaystyle |v|_{h}^2 : U \to \R_{\ge 0}$ is measurable, given by 
\begin{equation}
|v|_{h}^{2} = {}^{t}v h \ol v=\sum h_{i\ol j}v^{i}\ol{v^{j}}
\end{equation} 
where $v={}^{t}(v^{1},\ldots,v^{r})$ is a column vector.
\medskip

\noindent Following \cite[p.\,357]{BP08}, we recall next the notion of positivity/negativity of a singular Hermitian vector bundle as follows.

\begin{defn}\label{curv}
Let $h$ be a singular Hermitian metric on $E$.
\smallskip

\begin{enumerate}

\item[(1)] The metric $h$ is {\it negatively curved} if the function
\begin{equation}
x\to \log |v|_{h, x}^{2}
\end{equation}
is psh for any local section $v$ of $E$.
\smallskip

\item[(2)] The metric $h$ is {\it positively curved} if the dual singular Hermitian metric $h^{\star}:={}^{t}h^{-1}$ on the dual vector bundle $E^{\star}$ is negatively curved.
\end{enumerate}
\end{defn}
\medskip

\noindent We recall the following important property of the notion \ref{curv}.

\begin{proposition}\label{regsh}\cite[Cor 2.8]{CP17} Let $(E, h)$ be a vector bundle endowed with a positively curved singular Hermitian metric $h$ on a compact manifold $X$. If the first Chern class $c_1(E)= 0$, then the metric $h$ is non-singular and 
the curvature of $(E, h)$ is equal to zero. 
\end{proposition}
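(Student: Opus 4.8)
My proof would run in three steps: first reduce everything to the flatness of the determinant line bundle $\det E$; then promote this to the flatness of $h$ itself by an exterior–power duality; and finally settle a regularity point. Throughout, $E$ denotes the vector bundle underlying the positively curved singular metric $h$, and $r=\mathrm{rk}(E)$.

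\emph{Step 1: $\det E$ is Hermitian flat.} It is well known (cf. \cite{Rau}, \cite{HPS}) that the determinant of a positively curved singular Hermitian metric is again positively curved; one way to see it is to realise $\det E^\star$ as a sub-bundle of a tensor power of $E^\star$, on which negativity of the curvature is inherited. Hence $\Theta_{\det h}(\det E)\ge 0$ in the sense of currents. This is a closed positive $(1,1)$-current lying in the class $c_1(\det E)=c_1(E)=0$, so pairing it with $\omega_X^{n-1}$ for a K\"ahler metric $\omega_X$ on $X$ gives $0$; since the current is positive this forces $\Theta_{\det h}(\det E)=0$. Therefore the local weights of $\det h$ are pluriharmonic, $\det h$ is non-singular, and $(\det E,\det h)$ is a flat Hermitian line bundle.

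\emph{Step 2: $h$ is Hermitian flat.} Exterior powers of positively curved singular metrics are again positively curved (write $\Lambda^{r-1}E$ as a quotient of $E^{\otimes(r-1)}$), so $\Lambda^{r-1}h$ is a positively curved metric on $\Lambda^{r-1}E$. The canonical isomorphism $\Lambda^{r-1}E\simeq E^\star\otimes\det E$, $\eta\mapsto(v\mapsto \eta\wedge v)$, is an isometry from $(\Lambda^{r-1}E,\Lambda^{r-1}h)$ to $(E^\star\otimes\det E,\,h^\star\otimes\det h)$. Since $\det h$ is flat by Step 1, twisting by it does not affect positivity of the curvature, so I would conclude that $h^\star$ is positively curved on $E^\star$. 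On the other hand $h^\star$ is negatively curved on $E^\star$, this being exactly the definition of $h$ being positively curved (Definition \ref{curv}). Thus $h^\star$ is simultaneously positively and negatively curved on $E^\star$.

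\emph{Step 3: regularity.} It then remains to prove the general principle that a singular Hermitian metric which is both positively and negatively curved is non-singular and flat, and to apply it to $h^\star$ (so that $h=(h^\star)^\star$ is non-singular with vanishing curvature, which is the assertion of the proposition). For this one observes that $\det(h^\star)=(\det h)^{-1}$ is flat by Step 1, hence has locally bounded potential, so by Raufi's regularity criterion the curvature current $\Theta_{h^\star}(E^\star)$ is well defined; it is then Griffiths semi-positive by positivity and Griffiths semi-negative by negativity, hence identically zero, and a Hermitian metric whose curvature current vanishes is smooth and flat. I expect this last step to be the delicate one: the curvature current of a merely negatively curved singular metric need not be well defined a priori (Raufi), so one genuinely has to exploit the flatness of $\det h$ obtained in Step 1 in order to enter this framework, after which the two–sided curvature bound pins everything down.
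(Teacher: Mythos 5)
The paper does not actually prove Proposition~\ref{regsh}: it is quoted from \cite[Cor.~2.8]{CP17}, so there is no in-text argument to compare yours against. On its own merits your three-step scheme (flatness of $\det E$ from $c_1(E)=0$; the duality $\Lambda^{r-1}E\simeq E^\star\otimes\det E$ to get two-sided curvature; a regularity argument to conclude) is sound and is close in spirit to how this circle of ideas is handled in \cite{CP17}, \cite{Rau}, \cite{HPS}.

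Two points should be made precise rather than asserted. First, both Step~1 (realising $\det E^\star$ inside $(E^\star)^{\otimes r}$) and Step~2 (realising $\Lambda^{r-1}E$ as a quotient of $E^{\otimes(r-1)}$) rely on the fact that tensor powers of positively curved singular Hermitian metrics are again positively curved. Sub-bundles, quotients and line-bundle twists are immediate from Definition~\ref{curv}, but for a general tensor product this is \emph{not} a one-line consequence of plurisubharmonicity of $\log|v|^2$ (a local holomorphic section of a tensor product need not be decomposable); it is a genuine lemma of the singular-metric theory and should be cited, e.g.\ from \cite{Rau}, \cite{PT} or \cite{HPS}. Second, the pairing with $\omega_X^{n-1}$ in Step~1 needs $X$ to carry a K\"ahler metric — which is indeed the setting of \cite{CP17} and of this paper, even though the statement says only ``compact manifold'' — so that the closed positive current $\Theta_{\det h}(\det E)$ lies in the Bott--Chern class $c_1(E)=0$ and therefore vanishes. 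With those two references supplied, and with Step~3 carried out exactly as you indicate (Raufi's criterion applies to both $h$ and $h^\star$ because $\det h$ is now smooth and nowhere zero, the two-sided Griffiths bound forces the curvature current to vanish, and elliptic regularity of the resulting holomorphic connection form gives smoothness and flatness), the argument is complete.
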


\noindent If $E$ is replaced by a coherent, torsion-free sheaf $\cE$ then the conclusion of the statement
\ref{regsh} still holds provided that we restrict to the open subset on which
$\cE$ is a vector bundle.
\smallskip

\noindent The notion of positively curved singular Hermitian metric is the
natural property of push-forwards of relative canonical bundles as we see
from the next results.

\begin{thm}\label{push}\cite{PT, Pa15, HPS}. Let $p: Y\to X$ be an algebraic fiber space, and let $(L, h_L)$ be a Hermitian line bundle. The metric 
$h_L$ could be singular and the corresponding curvature current $\Theta(L, h_L)$ is semi-positive. Then the singular Hermitian  metric on the torsion-free push-forward sheaf
\begin{equation}\label{eq4} 
p_\star\left((K_{Y/X}+ L)\otimes \cI(h_L)\right)
\end{equation}
is positively curved in the sense of Griffiths.
If moreover $\cI(h_L |_{Y_x}) = \O_{Y_x} $ for a generic fiber $Y_x$, then $h_{Y/X}$ extends naturally as a singular Hermitian metric $p_\star\left((K_{Y/X}+ L) \right)$, which is also positively curved in the sense of Griffiths.
\end{thm}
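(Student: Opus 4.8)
I would set things up so as to reduce to Berndtsson's curvature theorem for direct images. Griffiths semi-positivity of a singular Hermitian metric is a local condition on the base, testable after restriction to general curves in $X$ (it is the plurisubharmonicity of $x\mapsto\log|\xi|^2_{h^\star}$ for local sections $\xi$ of the dual sheaf), and the $L^2$-metric of a direct image is compatible with such restriction; so I would first reduce to $X=\DD$, the unit disc, over which the direct image is automatically a genuine vector bundle. The ``moreover'' assertion is then disposed of at once: if $\cI(h_L|_{Y_x})=\cO_{Y_x}$ for $x$ in a dense Zariski-open $X_0\subset X$, the sheaves $p_\star((K_{Y/X}+L)\otimes\cI(h_L))$ and $p_\star(K_{Y/X}+L)$ coincide over $X_0$, so $h_{Y/X}$ is a singular Hermitian metric on the latter over $X_0$; since that sheaf is torsion free, $X\setminus X_0$ has codimension $\ge 1$, and the functions $\log|\xi|^2_{h_{Y/X}^\star}$ are locally bounded above near $X\setminus X_0$, the Hartogs-type extension theorem for plurisubharmonic functions lets $h_{Y/X}$ extend across $X\setminus X_0$ with the same positivity. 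Hence it is enough to prove the first assertion with $X=\DD$.

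Next I would replace the fibration by a smooth model. By resolution of singularities and generic flatness one obtains a modification $\mu:Y'\to Y$ such that $p':=p\circ\mu$ is smooth over $\DD\setminus\Sigma$ for a finite set $\Sigma\subset\DD$, together with the canonical identification $p_\star\big((K_{Y/X}+L)\otimes\cI(h_L)\big)\simeq p'_\star\big((K_{Y'/X}+\mu^\star L)\otimes\cI(\mu^\star h_L)\big)$, which follows from the relation $K_{Y'/X}+\mu^\star L=\mu^\star(K_{Y/X}+L)+K_{Y'/Y}$, the projection formula, and the birational invariance $\mu_\star\big(\cO(K_{Y'/Y})\otimes\cI(\mu^\star h_L)\big)=\cI(h_L)$ of multiplier ideals; moreover the two $L^2$-metrics correspond under this isomorphism, integration of relative canonical forms being a birational invariant. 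So I may assume $p$ itself is smooth over $\DD\setminus\Sigma$; it then suffices to prove Griffiths semi-positivity of $(\cE,h_{Y/X})$ over $\DD\setminus\Sigma$, the extension across the finite set $\Sigma$ being handled exactly as in the previous step (the relevant psh functions are locally bounded above there).

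On the submersive locus the model case is that of a \emph{smooth} metric $h_L$: there $(p_\star(K_{Y/X}+L),h_{Y/X})$ is Nakano --- hence Griffiths --- semi-positive by Berndtsson's theorem on direct images, \cite{Ber06}, \cite{BP08}. To treat a \emph{singular} $h_L$ I would not deform the curvature, but rather establish the minimal $L^2$-extension property of $h_{Y/X}$ directly: given $x_0\in\DD\setminus\Sigma$ and $v\in\cE_{x_0}=H^0\big(Y_{x_0},(K_{Y/X}+L)|_{Y_{x_0}}\otimes\cI(h_L|_{Y_{x_0}})\big)$, the Ohsawa--Takegoshi extension theorem with optimal constant (in the form used in \cite{PT}) provides a holomorphic extension $\wt v$ of $v$ to $p^{-1}(\DD')$, where $\DD'\subset\DD$ is a slightly smaller disc, with $\int_{p^{-1}(\DD')}|\wt v|^2e^{-\vp_L}\le|\DD'|\cdot\int_{Y_{x_0}}|v|^2e^{-\vp_L}$; by Fubini the left-hand side equals $\int_{\DD'}|\wt v(t)|^2_{h_{Y/X}}\,d\lambda(t)$, which is precisely the minimal extension property on every subdisc. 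The equivalence between that property and Griffiths semi-positivity (Berndtsson--Lempert; cf.\ \cite{PT}, \cite{HPS}) then concludes the argument. A more computational alternative would be to regularize $h_L$ by a decreasing sequence of metrics with analytic singularities, apply the smooth case after a vanishing ample twist, and pass to the limit; but that route needs care because of the behaviour of the multiplier ideals along the way.

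The hard part is exactly this passage from Berndtsson's smooth submersive theorem to the general singular statement: one must regularize the singular fibres and the singular weight $\vp_L$ simultaneously without losing any positivity of $h_{Y/X}$, and then push the positivity back down to $X$. The reduction of singularities is controlled by resolution together with the compatibility of $L^2$-metrics under the relative canonical bundle identification above; the decisive analytic input that rules out any loss of positivity is the Ohsawa--Takegoshi extension theorem, acting through the minimal extension property; and the final descent across the discriminant rests only on the removable-singularity theorem for plurisubharmonic functions applied to $\log|\xi|^2_{h_{Y/X}^\star}$, once one knows the (standard but not purely formal) fact that these functions are locally bounded above near the bad locus.
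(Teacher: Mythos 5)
The paper does not prove Theorem~\ref{push} at all: it is quoted from \cite{PT}, \cite{Pa15}, \cite{HPS} as an input, so there is no ``paper's own proof'' to compare your attempt against. Judged on its own terms, your sketch follows the route taken in \cite{HPS} (and in spirit \cite{PT}): reduce to a one-dimensional base by slicing and by removability of thin sets for the psh functions $\log|\xi|^2_{h_{Y/X}^\star}$, pass to a smooth model via resolution and the birational invariance of the adjoint multiplier ideal, verify the minimal $L^2$-extension property on subdiscs by Ohsawa--Takegoshi with the sharp constant, and then invoke the Berndtsson--Lempert type equivalence between that extremal property and Griffiths (equivalently, over a disc, Nakano) semi-positivity. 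This is in substance correct and is indeed how the cited references argue; the alternative regularization route you mention, with vanishing ample twists, is essentially the strategy in \cite{Pa15} and \cite{PT}.

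A few places deserve more care than the sketch gives them. First, the reduction to $X=\DD$ via restriction to curves silently uses that the $L^2$-metric on $p_\star(\cdot)|_C$ coincides with that of $(p|_{Y_C})_\star(\cdot)$; that compatibility holds generically but needs a base-change statement for direct images of torsion-free sheaves, which one usually arranges by throwing away a further analytic set. Second, the local upper-boundedness of $\log|\xi|^2_{h_{Y/X}^\star}$ near the discriminant (which powers the Hartogs extension) is ``standard but not purely formal,'' as you say, and is the place where most readers would want to see an honest estimate (it follows from the extension argument itself or from the plurisubharmonicity already established on the smooth locus plus a local $L^2$ bound). Third, the smooth-model input you need is Berndtsson's Nakano positivity of $p_\star(K_{Y/X}+L)$ for a smooth submersion with smooth $h_L$, which is \cite{Ber09}, not \cite{Ber06} or \cite{BP08}. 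None of these affects the correctness of the outline.
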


\noindent We also recall the following variant of Theorem \ref{push}.

\begin{cor}\label{pushep}\cite[Lemma 5.25]{CP17} Let $p: Y\to X$ be an algebraic fiber space, and let $(L, h_L)$ be a Hermitian line bundle. The metric 
$h_L$ could be singular and the corresponding curvature current $\Theta(L, h_L) + C p^\star \omega_X$ is semi-positive for some $C >0$. 
Let $\xi$ be a local holomorphic section of the dual of $p_\star\left((K_{Y/X}+ L)\otimes \cI(h_L)\right)$. Then we have
\begin{equation}\label{sHm100}\sqrt{-1}\partial\overline{\partial} \log |\xi|^2 _{h_{Y/X}} \geq - C \omega_X.
\end{equation}  
\end{cor}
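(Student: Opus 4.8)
\emph{Proof proposal.} The plan is to reduce to Theorem \ref{push} by absorbing the positivity defect of $h_L$ into a twist of the metric. Since the asserted inequality is local on $X$ — the section $\xi$ is only defined over some open set — I would first shrink that open set to a coordinate ball $U\subset X$ on which $\omega_X$ admits a smooth potential, say $\omega_X = \sqrt{-1}\partial\overline\partial\psi$ with $\psi$ bounded, and restrict $p$ over $U$; the push-forward positivity statements are local over the base, so nothing is lost.

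Next I would introduce the twisted metric $h_L':= h_L\, e^{-C\, p^\star\psi}$ on $L|_{p^{-1}(U)}$. Because $p^\star\psi$ is smooth and bounded, the multiplier ideal is unchanged, $\cI(h_L') = \cI(h_L)$, while the curvature becomes genuinely semi-positive:
\[
\Theta_{h_L'}(L) \;=\; \Theta_{h_L}(L) + C\, p^\star\!\big(\sqrt{-1}\partial\overline\partial\psi\big) \;=\; \Theta_{h_L}(L) + C\, p^\star\omega_X \;\geq\; 0 .
\]
Thus Theorem \ref{push} applies to $(L, h_L')$: writing $\cE := p_\star\big((K_{Y/X}+L)\otimes\cI(h_L)\big)$ — which is also the direct image sheaf attached to $(L, h_L')$, as neither $L$ nor $\cI(h_L)$ has changed — the associated $L^2$-metric $h_{Y/X}'$ on $\cE$ is positively curved in the sense of Griffiths. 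By Definition \ref{curv} this means the dual metric on $\cE^\star$ is negatively curved, so $\log|\xi|^2_{h_{Y/X}'}$ is plurisubharmonic on $U$ for each local holomorphic section $\xi$ of $\cE^\star$ (on the locus where $\cE$ is locally free).

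Finally I would compare $h_{Y/X}'$ and $h_{Y/X}$. The point is that $p^\star\psi$ is constant along each fiber $Y_x$, equal to $\psi(x)$; hence the fiberwise $L^2$-integral defining $h_{Y/X}'$ is obtained from the one defining $h_{Y/X}$ simply by the scalar factor $e^{-C\psi(x)}$, so $h_{Y/X}' = e^{-C\psi}\, h_{Y/X}$ on $\cE$ and therefore $|\xi|^2_{h_{Y/X}'} = e^{C\psi}\,|\xi|^2_{h_{Y/X}}$ on $\cE^\star$. Then $\log|\xi|^2_{h_{Y/X}'} = C\psi + \log|\xi|^2_{h_{Y/X}}$, and plurisubharmonicity of the left side gives
\[
\sqrt{-1}\partial\overline\partial \log|\xi|^2_{h_{Y/X}} \;\geq\; -\,C\,\sqrt{-1}\partial\overline\partial\psi \;=\; -\,C\,\omega_X ,
\]
as wanted. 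I do not expect a serious obstacle here: the argument is essentially a weight-twisting trick, and the only thing requiring care is the last bookkeeping step — verifying that the two direct image sheaves really coincide and that their $L^2$-pairings differ exactly by the fiberwise-constant weight $e^{-C\, p^\star\psi}$, which is where all the genuine content is hidden.
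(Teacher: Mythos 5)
Your argument is correct, and since the paper only cites \cite[Lemma 5.25]{CP17} for this statement rather than proving it, there is no in-paper proof to compare against; the weight-twisting reduction you give (twist $h_L$ by $e^{-Cp^\star\psi}$ where $\psi$ is a local potential of $\omega_X$, note that $\cI$ and $\cE$ are unchanged, apply Theorem \ref{push}, and observe that $h_{Y/X}'=e^{-C\psi}h_{Y/X}$ because the twisting weight is fiberwise constant) is exactly the standard mechanism that underlies the cited lemma.
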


\noindent The property \eqref{sHm100} will be formally denoted by
\begin{equation}\label{sHm101}\Theta_{h_{Y/X}} (p_\star\left((K_{Y/X}+ L)\otimes \cI(h_L)\right)) \succeq -C \omega_X \otimes \Id.
\end{equation}
\medskip

\noindent We will equally need the following generalization of Proposition \ref{regsh} proved in \cite{CH19, CM19}.
\begin{thm}\label{weakpositivv}\cite{CH19,CM19}
Let $X$ be a projective manifold and $\cF$ be a reflexsive sheaf on $X$. Let $X_0$ be the locally free locus of $\cF$. If $c_1 (\cF)=0$ and for every $\ep >0$, there exsits a possible singular hermitian metric $h_\ep$ on $\cF |_{X_0}$ such that 
\begin{equation}\label{sHm1022}\Theta_{h_\ep} (\cF) \succeq -\ep  \omega_X \otimes \Id \qquad\text{on } X_0,
\end{equation}
then $\cF$ is a numerically flat vector bundle on $X$, namely $\cF$ is nef and $c_1 (\cF)=0$.
\end{thm}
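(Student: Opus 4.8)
The plan is to reduce the statement to two facts: that $\cF$ is locally free, and that $\cF$ is nef. Granting both, there is nothing left to prove, since ``numerically flat'' is defined here to mean ``nef with vanishing first Chern class''. The only new analytic input is the nefness, which I would extract from the family $(h_\ep)_{\ep>0}$ of almost-positively curved metrics; local freeness will then follow from the structure theory of numerically flat sheaves. Recall that, on a modification $\pi\colon\wh X\to X$ on which $E:=\pi^\star\cF/\mathrm{torsion}$ is a vector bundle, $\cF$ is nef by definition if and only if $E$ is nef, and --- $X$ (hence $\P(E)$) being projective --- this is equivalent to requiring that $\nu^\star\cF$, modulo torsion, be a nef vector bundle on $C$ for every morphism $\nu\colon C\to X$ from a smooth projective curve, i.e. that every quotient bundle of $\nu^\star\cF$ have non-negative degree.

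\noindent First I would promote each $h_\ep$ from $X_0$ to a singular Hermitian metric on the reflexive sheaf $\cF$ over all of $X$: for a local section $v$ of $\cF^\star$ the function $\log|v|^2_{h_\ep^\star}$ is $\ep\,\omega_X$-plurisubharmonic on $X_0$, hence, being locally bounded above, extends across the analytic set $X\ssm X_0$, which has codimension $\ge 2$; thus $\Theta_{h_\ep}(\cF)\succeq -\ep\,\omega_X\otimes\Id$ now holds on all of $X$. Restricting to a curve $C$ gives on $\nu^\star\cF$ a singular metric with curvature $\succeq -\ep\,\nu^\star\omega_X\otimes\Id$. If $Q$ is a quotient bundle of $\nu^\star\cF$ of rank $q$, then $Q^\star$ is a subbundle of $(\nu^\star\cF)^\star$, the induced metric on $Q^\star$ has curvature bounded above by that of the ambient bundle (as in the smooth case; this persists for singular metrics), hence by $\ep\,\nu^\star\omega_X\otimes\Id$, and integrating over $C$:
\[
\deg Q\;=\;-\deg Q^\star\;\ge\;-\,q\,\ep\!\int_C\nu^\star\omega_X\;\ge\;-\,r\,\ep\,\bigl(\nu_\star[C]\cdot\{\omega_X\}\bigr).
\]
For a \emph{fixed} curve $C$ the right-hand side is a fixed multiple of $\ep$, while $\deg Q\in\Z$; letting $\ep\to0$ forces $\deg Q\ge0$. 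Hence $\nu^\star\cF$ is nef for every $C$, so $\cF$ is nef.

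\noindent It remains to see that $\cF$ is locally free. On $\wh X$ the bundle $E$ is now nef and --- once the contribution of the exceptional divisors of $\pi$ to $c_1(E)$ is bookkept --- has numerically trivial determinant, hence is numerically flat in the sense of Demailly--Peternell--Schneider; in particular it admits a filtration whose graded pieces are Hermitian flat, i.e. correspond to unitary representations of $\pi_1(\wh X)$, and a by-now-standard argument (as in the study of reflexive sheaves with vanishing Chern classes, see \cite{GKP1}, \cite{CP17}) then shows that $\pi_\star E$ is reflexive and locally free and coincides with $\cF$. Thus $\cF$ is a nef vector bundle with $c_1(\cF)=0$, i.e. it is numerically flat.

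\noindent\textbf{Main obstacle.} The genuinely delicate point is that the metrics $h_\ep$ are \emph{singular}, so their restriction to an arbitrary curve need not be meaningful --- the image of $C$ may lie in the polar locus of the metric, or in the exceptional locus of $\pi$ --- and nefness cannot be read off from general curves alone, being an open condition. The clean way to make the argument of the second paragraph rigorous is therefore to pass to $\P(E^\star)$: the $h_\ep$ induce singular metrics on $\cO_{\P(E^\star)}(1)$ with curvature bounded below by $-\ep$ times a fixed Kähler form (up to a fixed exceptional correction), so the class of $\cO_{\P(E^\star)}(1)$ lies in the closed pseudoeffective cone; one then upgrades ``pseudoeffective, with numerically trivial determinant'' to ``nef'' by combining Nakayama's divisorial Zariski decomposition with the non-negativity on movable curves provided by the estimate above. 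The remaining non-formal ingredient is the descent of numerical flatness from $\wh X$ to $X$ in the third paragraph; the $\ep\to0$ step and the final identification are routine.
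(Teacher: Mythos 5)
Your strategy — prove nefness on curves, then invoke structure theory to get local freeness — diverges from the paper's and, as written, it has a genuine gap precisely where you flag the ``main obstacle.''

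The paper proceeds differently. It fixes a polarisation $H$ and an $H$-stable filtration $0\to\cG_1\to\cdots\to\cG_m=\cF$, shows each reflexive graded piece $(\cG_{i+1}/\cG_i)^{\star\star}$ inherits the hypotheses (vanishing $c_1$ and an $\ep$-family of almost positively curved singular metrics), restricts to a generic complete-intersection \emph{surface} $S\subset X$, and applies a surface-level result from \cite{CH19} (Cor.~2.12) to conclude each piece is Hermitian flat on $S$. That gives $c_2\cdot H^{n-2}=0$, and Bando--Siu \cite{BS94} then upgrades Hermitian flatness from $S$ to all of $X$; the filtration makes $\cF$ a successive extension of Hermitian flat bundles, hence numerically flat by \cite{DPS94}. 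The crucial point is that all the hard analytic input is pushed to surfaces, where singularities of the metrics can be handled once and for all.

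Your route does not survive closer inspection at the two places you yourself anticipate trouble. First, you cannot extend $h_\ep$ across $X\ssm X_0$ by invoking extension of psh functions: there is no vector bundle on $X\ssm X_0$ to carry a metric, only the reflexive sheaf, so ``$h_\ep$ on $X$'' is not even a well-formed object (the paper never needs it, precisely because it works with the filtration and with restrictions to generic complete intersections, which may be arranged to miss the bad locus). Second, the curve-restriction argument fails for the reason you name: for each $\ep$ the polar set of $h_\ep$ is its own analytic set, and a given curve $C$ can sit inside that set for \emph{all} small $\ep$, so you get no degree bound on $C$ at all. Your proposed repair via $\P(E^\star)$ and Nakayama's Zariski decomposition does not close this: ``$\cO_{\P(E^\star)}(1)$ pseudoeffective together with $c_1(\cF)=0$'' does not imply nef --- the divisorial Zariski decomposition can have a nonzero negative part supported on non-horizontal divisors, and ruling that out is exactly the content you would need to supply (it is not a formal consequence of the degree estimate on generic movable curves). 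Third, the descent step has a circularity: to say $E$ on $\wh X$ has numerically trivial determinant ``after bookkeeping the exceptional divisors'' already presupposes that those exceptional contributions vanish, which is essentially the local-freeness conclusion you are trying to prove. The paper sidesteps all three difficulties at once by reducing to Hermitian flatness on a surface where the relevant analytic lemma (\cite[Cor.~2.12]{CH19}) is available, and then letting Bando--Siu and Demailly--Peternell--Schneider do the reassembly. If you want to salvage your approach, you would need, at minimum, a precise argument controlling the $\ep$-dependence of the polar sets (e.g.\ via a diagonal regularisation), and a separate argument killing the negative part of the Zariski decomposition on $\P(E^\star)$; neither is routine.
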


\begin{remark}{\rm
    The theorem \ref{weakpositivv} is proved in \cite{CH19} for projective surfaces
    and it was very recently generalized to arbitrary dimension in \cite{CM19}.
    We recall briefly the idea of the proof and refer to  \cite{CM19} for a complete treatment.

We fix a polarisation $H$, and let $0\rightarrow \cG_1 \rightarrow \cG_2 \cdots \rightarrow \cG_m = \cF$
a be $H$-stable filtration. By an argument which parallels \cite[Thm 1.18]{DPS94}, we we show that every quotient $(\cG_{i+1} /\cG_i)^{\star\star}$ satisfies the same conditions like $\cF$, namely 
$$c_1 ((\cG_{i+1} /\cG_i)^{\star\star})=0 \qquad\text{and} \qquad \Theta_{h_{i,\ep}} ((\cG_{i+1} /\cG_i)^{\star\star}) \succeq -\ep  \omega_X \otimes \Id ,$$
where $h_{i,\ep}$ is a natural metric induced by $h_\ep$. 
Let $S$ be a surface defined by complete intersection of $H_1 \cap H_2 \cdots \cap H_{n-2}$, where $H_i$ is a generic hypersurface in the class of $|H|$. By applying \cite[Cor 2.12]{CH19} to 
$(\cG_{i+1} /\cG_i)^{\star\star} |_S$, we know that $(\cG_{i+1} /\cG_i)^{\star\star} |_S $ is hermitian flat. In particular, $c_2 ((\cG_{i+1} /\cG_i)^{\star\star} ) \wedge H^{n-2} =0$. Then \cite[Cor 3]{BS94} implies that  $(\cG_{i+1} /\cG_i)^{\star\star}$ is a hermitian flat vector bundle on $X$. As a consequence, we can prove that $\cF$
is an successive extension of hermitian flat vector bundles on $X$. It is thus numerically flat by \cite[Thm 1.18]{DPS94}. }
\end{remark}

\medskip

\section{Proof of the main result: the projective case}

\noindent In this section we will prove our main result under the assumption that $X$
is projective. Indeed, in this case all the necessary tools we will be using in our arguments are already available: we show that Theorem \ref{thm1}
follows from the positivity of direct images, combined with the existence of Hermite-Einstein metric on stable vector bundles.
\medskip

\noindent To make things precise, the result we establish in this first section is
the following.

\begin{thm}\label{thm1proj}
Let $(X, H)$ be a smooth polarized projective manifold. We consider a reflexive sheaf $\cF$ with the following properties.
\begin{enumerate}

\item[(i)] The first Chern class of $\cF$ is zero, i.e. $c_1(\cF)= 0$.
  \smallskip

\item[(ii)] The sheaf $\widehat{\Sym}^k\cF^\star$ is $H$-stable for some $k\geq r$, where $r$ is the rank of $\cF$.  
\end{enumerate}  
Then the bundle $\O(1)$ on $\P(E)$ is not pseudo-effective, or $\cF$ is Hermitian flat. \end{thm}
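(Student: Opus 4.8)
The plan is to show that if $\cO(1)$ is pseudo-effective then $\cF$ carries a positively curved singular Hermitian metric, so that Proposition \ref{regsh} together with $c_1(\cF)=0$ forces $\cF$ to be Hermitian flat; this is exactly the stated dichotomy. One may assume $r\ge 2$, since for $r=1$ the sheaf $\cF$ is a line bundle with $c_1=0$, hence flat. I would first record that, $\widehat{\Sym}^k\cF^\star$ being $H$-stable of slope zero, it has no nonzero section: a section $\cO_X\to\widehat{\Sym}^k\cF^\star$ embeds $\cO_X$ as a proper subsheaf of the same slope, contradicting stability. Applying $\Sym^k$ to a section of $\cF^\star$ (nonzero at a general point) shows likewise $H^0(X,\cF^\star)=0$, i.e. $H^0(\P(E),\cO(1))=0$, and similarly $H^0(\P(E),\cO(k))=0$.

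So assume $\cO(1)$ pseudo-effective and fix a metric $e^{-\vp}$ on $\cO(1)$ with $\Theta(\cO(1),e^{-\vp})\ge 0$. Let $\sigma\colon\P(E)\to X$ be the composition of $\P(E)\to\wh X$ with $\pi$, and for general $x\in X$ let $F_x=\sigma^{-1}(x)\cong\P^{r-1}$ (such $x$ avoids $\Exc(\pi)$ and the non-locally-free locus of $\cF$). The crucial step is to prove that for general $x$ the multiplier ideal $\cI\big(e^{-(k+r)\vp}|_{F_x}\big)$ is trivial (hence also $\cI(e^{-(r+1)\vp}|_{F_x})=\cO_{F_x}$, since $k+r\ge r+1$). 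By the Mehta--Ramanathan restriction theorem I would pick a general complete intersection curve $C\subset X$ cut out by sufficiently positive members of $|H|$, so that $\widehat{\Sym}^k\cF^\star|_C$ stays stable of degree zero, $C$ misses the bad loci, and $\cO(1)|_\Sigma$ is still pseudo-effective, where $\Sigma:=\sigma^{-1}(C)$ (the pull-backs to $\P(E)$ of the $|H|$-members being base point free). On $q\colon\Sigma\to C$ set
\[
\mathcal Q:=q_\star\Big(\big(K_{\Sigma/C}+(k+r)\cO(1)\big)\otimes\cI\big((k+r)\vp|_\Sigma\big)\Big).
\]
Since $\Theta\big((k+r)\cO(1),e^{-(k+r)\vp}\big)|_\Sigma\ge 0$, Theorem \ref{push} endows $\mathcal Q$ with a singular Hermitian metric positively curved in the sense of Griffiths, whence $\deg_C\mathcal Q\ge 0$. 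On the other hand $K_{\Sigma/C}=\cO(-r)\otimes q^\star\det(E^\star|_C)$, so $\cI((k+r)\vp|_\Sigma)\hookrightarrow\cO_\Sigma$ realizes $\mathcal Q$ as a subsheaf of $q_\star\big(K_{\Sigma/C}+(k+r)\cO(1)\big)=\big(\widehat{\Sym}^k\cF^\star\otimes\det\cF^\star\big)|_C$, which is stable of degree zero. Thus $\mathcal Q$ is either $0$ or all of this ambient sheaf; in the second case $\rk\mathcal Q=h^0(F_x,\cO(k))$ while $\rk\mathcal Q=h^0\big(F_x,\cO(k)\otimes\cI((k+r)\vp|_{F_x})\big)$, and base point freeness of $|\cO_{F_x}(k)|$ forces $\cI((k+r)\vp|_{F_x})=\cO_{F_x}$; letting $C$ vary (and using semicontinuity of multiplier ideals in families) gives the claim for general $x\in X$.

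It then remains to exclude the degenerate case $\mathcal Q=0$. Here $\mathcal Q$ is torsion free and vanishes for a general such $C$, so $\sigma_\star\big((K_{\P(E)/X}+(k+r)\cO(1))\otimes\cI((k+r)\vp)\big)=0$; computing masses on $F_x$, this says the divisorial (Siu) part of $\Theta(\cO(1),e^{-\vp})|_{F_x}$ carries its full mass, for general $x$. Equivalently, $\cF^\star$ acquires a proper saturated subsheaf from that fiberwise effective divisor in $\P(E)$ via the tautological correspondence, and using pseudo-effectivity of $\cO(1)$ together with $H^0(\P(E),\cO(1))=0$ one checks that an appropriate twist of a symmetric power of this subsheaf is a proper subsheaf of $\widehat{\Sym}^k\cF^\star$ of slope $\ge 0$ (here $r\ge 2$), contradicting (ii). This exclusion — matching the fiberwise curvature concentration against the stability of $\widehat{\Sym}^k\cF^\star$, and the place where $H^0(\P(E),\cO(1))=0$ is essential — is the step I expect to be the main obstacle.

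Granting the claim, I would finish as follows. Since $c_1(\cF)=0$, the line bundle $\det\cF^\star$ has $c_1=0$, hence a flat metric. Put on $L:=(r+1)\cO(1)$ the metric of weight $(r+1)\vp$, so $\Theta(L)\ge 0$ and $\cI((r+1)\vp|_{F_x})=\cO_{F_x}$ for general $x$; then Theorem \ref{push} shows that $\sigma_\star\big((K_{\P(E)/X}+L)\otimes\cI((r+1)\vp)\big)$, which on the locally free locus (and reflexively on $X$) equals $\cF^\star\otimes\det\cF^\star$, carries a positively curved singular Hermitian metric in the sense of Griffiths. Tensoring by the flat metric on $\det\cF$ yields such a metric on $\cF^\star$ itself; since $c_1(\cF^\star)=0$, Proposition \ref{regsh} makes it smooth and flat, so $\cF^\star$, and therefore $\cF$, is Hermitian flat. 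Finally, a technical nuisance running through the whole argument is the modification $\pi\colon\wh X\to X$, whose exceptional locus (where $\sigma^\star\omx$ degenerates) must be absorbed — e.g. by working with the Kähler perturbation $\wh\om_0$ of Section 2, or by restricting the $L^2$ estimates to $\P(E)\setminus\sigma^{-1}(\Exc\pi)$.
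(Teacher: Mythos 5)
Your overall strategy (produce a positively curved singular metric on $\cF^\star$, then invoke Proposition \ref{regsh}) and your use of Mehta--Ramanathan restriction to a complete intersection curve are in the spirit of the paper, but the key technical step is different and, as you yourself suspect, the difference opens a gap that your argument does not close.

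The decisive device in the paper's proof of Lemma \ref{trivia} is that it does \emph{not} put the full weight $(k+r)\varphi$ on $\cO(k+r)|_\Sigma$. Instead it writes the weight as $\psi_L=(k+1-\ep)\varphi+(r-1+\ep)\varphi_C$, where $\varphi_C$ is the smooth, \emph{flat} metric on $\cO(1)|_\Sigma$ induced by the Hermite--Einstein metric on $\cF|_C$, which is flat precisely because $\cF|_C$ is stable of degree zero. The flat piece $\varphi_C$ contributes nothing to the multiplier ideal on the fiber and absorbs $(r-1+\ep)$ of the needed twist, so the singular part of the weight on the fiber is only $(k+1-\ep)\varphi$. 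That exponent is crafted so that the adjoint $L^2$-extension result (Lemma \ref{l2est}, applied with $m=k+1$ on $\P^{r-1}$) \emph{guarantees} $H^0\bigl(\P^{r-1},\cO(k)\otimes\cI(e^{-(k+1-\ep)\varphi}|_{F_x})\bigr)\neq 0$, hence the direct image $\cG$ is automatically nonzero and the stability argument closes immediately: any proper nonzero subsheaf of the stable degree-zero bundle $\Sym^k\cF^\star|_C$ has negative degree, contradicting the Griffiths positivity of $\cG$.

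By contrast, you put the full weight $(k+r)\varphi$ on $\cO(k+r)|_\Sigma$, so the multiplier ideal on the fiber involves the exponent $k+r$, which is strictly larger than the $k+1$ that the $L^2$-extension lemma supports for sections of $\cO(k)$. Consequently $\cQ=0$ is a genuine possibility: a single log pole of $\varphi$ along a divisor meeting $F_x$ in degree $\geq 1$ already forces $\cI(e^{-(k+r)\varphi}|_{F_x})\subset\cO_{F_x}(-(k+1)D)$ and kills all sections of $\cO(k)|_{F_x}$. You flag this ``$\cQ=0$'' case as the main obstacle and sketch an exclusion via a Siu decomposition of $\Theta(\cO(1),e^{-\varphi})$ and a putative destabilizing subsheaf of $\widehat{\Sym}^k\cF^\star$. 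That sketch does not go through as stated: (a) the non-divisorial part of the Siu decomposition can carry Lelong mass on the fibers without producing any coherent subsheaf of $\cF^\star$; (b) even for the divisorial part, extracting a saturated subsheaf of $\widehat{\Sym}^k\cF^\star$ with slope $\geq 0$ requires controlling the $p^\star(L_j)$-twists, and the argument the paper uses for that (Theorem \ref{h_div}) relies on $\cF$ being a foliation and on pseudo-effectivity of quotients coming from \cite{CP19} --- tools unavailable in the general reflexive-sheaf setting of Theorem \ref{thm1proj}. The whole point of the paper's choice of weight is to bypass this case distinction altogether.

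A secondary but genuine difference: because of the $\ep$-loss built into $(k+1-\ep)\varphi$, the paper only gets triviality of $\cI(e^{-(r+1-\ep)\varphi}|_{F_x})$ for every $\ep>0$, not at $\ep=0$. Its final step therefore does \emph{not} apply Theorem \ref{push} and Proposition \ref{regsh} directly; it uses the perturbed metrics $h_\ep$ on $\cO(r+1)$, Corollary \ref{pushep} to get $\Theta_{h_{\ep,\cF^\star}}\succeq -\ep\,\omega_X\otimes\Id$, and then the numerical flatness result Theorem \ref{weakpositivv} (which is where projectivity enters), followed by \cite{DPS94}. Your cleaner finish (plain Theorem \ref{push} plus Proposition \ref{regsh}) is only available if you truly have triviality at exponent exactly $r+1$, i.e.\ if your stronger claim $\cI(e^{-(k+r)\varphi}|_{F_x})=\cO_{F_x}$ were established --- which is precisely the unproved step.

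In short: replace the weight $(k+r)\varphi$ by the paper's split weight $(k+1-\ep)\varphi+(r-1+\ep)\varphi_C$ using the flat Hermite--Einstein metric on $\cF|_C$, then use Lemma \ref{l2est} to force the direct image to be nonzero; and for the conclusion, account for the $\ep$-loss via Corollary \ref{pushep} and Theorem \ref{weakpositivv} rather than Theorem \ref{push} and Proposition \ref{regsh}.
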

\noindent The bundle $E$ in (ii) is $\pi^\star(\cF)$ modulo torsion, cf. section 1.
\medskip

\noindent An immediate consequence of Theorem \ref{thm1proj} is the following statement.
\begin{cor}\label{coro1proj}
  Let $(X, H)$ be a smooth polarized projective manifold. We consider a
  saturated coherent subsheaf $\cF\subset T_X$ satisfying the properties in Theorem {\rm \ref{thm1proj}} and moreover
  we assume that $\cF$ is closed under the Lie bracket. Then $\cF$ is either Hermitian flat or all of its leaves are algebraic.
\end{cor}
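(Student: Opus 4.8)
The plan is to read the corollary off Theorem \ref{thm1proj} together with the standard dictionary relating pseudo-effectivity of the tautological bundle on $\P(E)$ to algebraic integrability of a foliation. First I would apply Theorem \ref{thm1proj} to $\cF$: since $\cF$ is reflexive (a saturated subsheaf of the locally free sheaf $T_X$ is reflexive) and satisfies hypotheses (i) and (ii), we obtain that either $\cF$ is Hermitian flat, or $\cO(1)$ on $\P(E)$ is not pseudo-effective. In the first case the first alternative of the conclusion holds and we are done, so the entire content is the implication: \emph{if $\cF\subset T_X$ is saturated and closed under the Lie bracket, with $c_1(\cF)=0$, and $\cO_{\P(E)}(1)$ is not pseudo-effective, then every leaf of $\cF$ is algebraic.}

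To prove this implication I would argue by contradiction. Suppose the leaf $L$ through a general point of $X$ is not algebraic, and let $Z:=\overline{L}$ be its Zariski closure, an $\cF$-invariant irreducible subvariety with $\dim Z>\rk\cF=r$. By the theory of the closure of a general leaf one may, after an additional modification, assume that these subvarieties $Z$ are the fibres of a proper fibration $\psi\colon X'\to W$ and that the induced foliation is contained in, but strictly smaller than, the relative tangent sheaf of $\psi$. Restricting to a general fibre $Z_w$, the foliation $\cF|_{Z_w}$ has a well-defined tautological lift into $\P(E)$, and one builds out of it, using the hypothesis $c_1(\cF)=0$, a closed positive current in the restriction of the class $c_1(\cO(1))$ to the part of $\P(E)$ lying over $Z_w$; averaging over $W$ produces a global closed positive current in $c_1(\cO_{\P(E)}(1))$, contradicting the assumed non-pseudo-effectivity. (Equivalently one may phrase the construction in terms of the lifted foliation on $\P(E)$ and the non-pseudo-effectivity of its canonical bundle, in the spirit of the foliation-theoretic criteria in \cite{HP19}, \cite{Dr18}, \cite{LPT}.) Once the general leaf is known to be algebraic, the standard structure theory of algebraically integrable foliations --- the existence of the family of leaf closures as a proper fibration over a normal base --- upgrades this to the statement that \emph{all} leaves of $\cF$ are algebraic, which is the second alternative in the corollary.

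The step I expect to be the main obstacle is precisely this translation of ``non-pseudo-effectivity of $\cO_{\P(E)}(1)$'' into ``algebraic integrability of $\cF$'': one has to control the singular locus of $\cF$, the exceptional divisors of $\pi\colon\widehat X\to X$ (which a priori could absorb mass of the current produced above), and to check that the bundle $\cO(1)$ appearing in Theorem \ref{thm1proj}, with the paper's convention on $\P(E)$, is exactly the one governed by the foliation criterion. If a statement in the required generality is already available in the literature, the cleanest option is to invoke it directly, in which case the proof of the corollary reduces to the single application of Theorem \ref{thm1proj} above; otherwise one has to carry out the current construction sketched here.
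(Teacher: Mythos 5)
Your proposal is correct and coincides with the paper's approach: one applies Theorem \ref{thm1proj} to reduce the corollary to the implication ``$\cO_{\P(E)}(1)$ not pseudo-effective $\Rightarrow$ the leaves of $\cF$ are algebraic,'' and the paper then simply cites this algebraicity criterion as a consequence of \cite{CP19} (cf.\ also \cite{Dr18}) rather than re-proving it. The self-contained current construction you sketch as a fallback is unnecessary --- the citation you anticipate does exist and is exactly what the paper invokes.
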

\medskip  

\noindent We assume that the tautological bundle $\O(1)\to \P (E)$ is pseudo-effective.
Then we obtain a singular metric $h= e^{-\varphi}$ on $\O(1)$ whose curvature current $\Theta$ is positive,
$$\Theta\geq 0.$$

\noindent The main point in our arguments is the following statement concerning the singularities of the metric $h$. 

\begin{lemme}\label{trivia} We consider the map $p:\P(E)\to X$ obtained by composing the natural projection $\P(E)\to \wh X$ with $\pi$.
If $\Sym^k \cF^\star$ is $H$-stable for some $k\in \mathbb{N}$, then the multiplier sheaf corresponding to the restriction
\begin{equation}\label{eq1}
e^{-(k+1-\ep)\varphi}|_{\P({E})_x}
\end{equation} 
is trivial for any $\ep >0$. Here $x$ is any point in the complement of a countable union of proper analytic subsets of $X$, where we denote by $\P({E})_x$ the $p$--inverse image of the point $x\in X$.
\end{lemme}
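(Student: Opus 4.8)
We argue by contradiction, manufacturing a destabilizing subsheaf of $\widehat{\Sym}^k\cF^\star$. Suppose the conclusion fails, so that for some $\ep_0>0$ the locus
$$ \Sigma_{\ep_0}:=\big\{x\in X:\ \cI\big(e^{-(k+1-\ep_0)\varphi}|_{\P(E)_x}\big)\ne\cO_{\P(E)_x}\big\} $$
is not contained in any countable union of proper analytic subsets of $X$. Enlarging the exponent of $\varphi$ only shrinks the multiplier ideal, and $k+r\ge k+1>k+1-\ep_0$, so $\cI\big(e^{-(k+r)\varphi}|_{\P(E)_x}\big)\subsetneq\cO_{\P(E)_x}$ for every $x\in\Sigma_{\ep_0}$. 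The reason for replacing $k+1-\ep_0$ by the integer $k+r$ is that $\cO(k+r)$ is exactly the twist which, once the relative canonical bundle is subtracted, reproduces $\widehat{\Sym}^k\cF^\star$: over a generic point $x$ one has $\P(E)_x\cong\P^{r-1}$ with $p$ a submersion, $K_{\P(E)/X}|_{\P(E)_x}=\cO_{\P^{r-1}}(-r)$, whence $(K_{\P(E)/X}+\cO(k+r))|_{\P(E)_x}=\cO_{\P^{r-1}}(k)$, and globally — using $c_1(\cF)=0$ — one has $p_\star\big(K_{\P(E)/X}+\cO(k+r)\big)=\widehat{\Sym}^k\cF^\star$, the two reflexive sheaves agreeing off a set of codimension $\ge2$ (the exceptional locus of $\pi$ is disjoint from the locally free locus of $\cF$). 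We may assume $k\ge1$, the case $k=0$ being automatic ($\cO_{\P^{r-1}}(1)$ carries no psef weight of log canonical threshold $<1$, by Skoda).

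Now set
$$ \mathcal{G}:=p_\star\Big(\big(K_{\P(E)/X}+\cO(k+r)\big)\otimes\cI\big(e^{-(k+r)\varphi}\big)\Big)\ \subseteq\ \widehat{\Sym}^k\cF^\star. $$
I claim $\mathcal{G}$ is a \emph{proper} subsheaf. By base change together with the fact that restriction to the general fibre of $p$ commutes with the formation of multiplier ideals — both holding away from a countable union of proper subvarieties of $X$, hence over some $x\in\Sigma_{\ep_0}$ — the fibre of $\mathcal{G}$ over such an $x$ is $H^0\big(\P^{r-1},\cO_{\P^{r-1}}(k)\otimes\mathcal{J}_x\big)$ with $\mathcal{J}_x:=\cI\big(e^{-(k+r)\varphi}|_{\P(E)_x}\big)\subsetneq\cO_{\P^{r-1}}$. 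Since $\cO_{\P^{r-1}}(k)$ is globally generated ($k\ge1$), a global section not vanishing at a point of $V(\mathcal{J}_x)$ cannot lie in $\cO_{\P^{r-1}}(k)\otimes\mathcal{J}_x$, so $\dim H^0(\cO_{\P^{r-1}}(k)\otimes\mathcal{J}_x)<\binom{r+k-1}{k}=\rk\widehat{\Sym}^k\cF^\star$; thus $\rk\mathcal{G}<\rk\widehat{\Sym}^k\cF^\star$. On the other hand, $e^{-(k+r)\varphi}$ is a singular metric on $\cO(k+r)$ with semipositive curvature current, so Theorem \ref{push} applies: the natural $L^2$-metric $h_{Y/X}$ on $\mathcal{G}$ is positively curved in the sense of Griffiths.

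A torsion-free coherent sheaf carrying a Griffiths-positively-curved singular Hermitian metric has pseudoeffective first Chern class — the induced weight on its determinant is plurisubharmonic on the locally free locus, whose complement has codimension $\ge2$ — hence $\mu_{\omega_X}(\mathcal{G})\ge0$, and the same holds for the saturation $\overline{\mathcal{G}}$ of $\mathcal{G}$ in $\widehat{\Sym}^k\cF^\star$. But $\widehat{\Sym}^k\cF^\star$ is $\omega_X$-stable with $c_1=0$, hence of slope $0$, so every proper saturated subsheaf has \emph{negative} slope, contradicting $\mu_{\omega_X}(\overline{\mathcal{G}})\ge0$. This proves the lemma. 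One can render the last two steps completely transparent by restricting to a general complete intersection curve $C\subset X$: by Mehta--Ramanathan $\widehat{\Sym}^k\cF^\star|_C$ stays stable of degree $0$, $C$ avoids the exceptional locus of $\pi$ so that $p^{-1}(C)\to C$ is a genuine $\P^{r-1}$-bundle, $\varphi$ restricts to a psef weight, $\mathcal{G}|_C$ is the corresponding direct image over $C$, and the weight on $\det\mathcal{G}|_C$ is subharmonic, giving $\deg_C\mathcal{G}|_C\ge0$; alternatively one transfers stability to the modification $\widehat X$ via Lemma \ref{gkp} and Theorem \ref{mtoma} and argues with $\widehat\omega^{\,n-1}$-slopes.

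The step I expect to be the main obstacle is the bookkeeping hidden in the word ``generic'': one must show that if $\Sigma_{\ep_0}$ escapes every countable union of proper analytic subsets, then a general complete intersection curve meets in a dense set the locus where the fibrewise multiplier ideal is nontrivial, simultaneously with the loci where base change for $\mathcal{G}$ and the generic restriction theorem for multiplier ideals both hold. The second, more standard but not purely formal, ingredient is the statement that a torsion-free sheaf with a Griffiths-positively-curved singular Hermitian metric has pseudoeffective $c_1$; this is the bridge converting the positivity of direct images (Theorem \ref{push}) into the slope inequality that contradicts stability.
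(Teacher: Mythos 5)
Your reduction to a destabilizing subsheaf is the right general scheme (it is also the paper's), but the specific choice of weight breaks the argument at a crucial point: you put the \emph{entire} coefficient $k+r$ on the singular weight $\varphi$ and consider
$\mathcal G=p_\star\big((K_{\P(E)/X}+\cO(k+r))\otimes\cI(e^{-(k+r)\varphi})\big)$,
but you never show $\mathcal G\neq 0$, and in general it is zero. A zero sheaf is not a destabilizing subsheaf, so no contradiction with stability can be extracted. Concretely, under your contradiction hypothesis nothing prevents $\varphi$ from restricting on the generic fibre to a weight with Lelong number $1$ along a hyperplane, say $\varphi|_{\P(E)_x}=\log|\ell|^2+O(1)$; then $\cI\big(e^{-(k+r)\varphi}|_{\P(E)_x}\big)=(\ell^{\,k+r})$ and $H^0\big(\P^{r-1},\cO(k)\otimes(\ell^{\,k+r})\big)=0$, so $\mathcal G$ vanishes identically and your slope inequality $\mu(\overline{\mathcal G})\ge 0$ is vacuous. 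This is exactly why the lemma's threshold is $k+1$ and not $k+r$: to keep the fibrewise $H^0$ nonzero one can only afford a coefficient strictly less than $k+1$ on $\varphi$, and the remaining weight of total degree $r-1+\ep$ must be carried by a \emph{nonsingular} semipositive metric. This is the step you cannot bypass, and it is where the hypotheses (i) and (ii) actually enter the paper's proof: restricting to a Mehta--Ramanathan general complete intersection curve $C$, stability plus $c_1(\cF)=0$ give a flat Hermite--Einstein metric on $\cF|_C$, hence a smooth semipositive weight $\varphi_C$ on $\cO(1)$, and the paper endows $\cO(k+r)$ with $\psi_L=(k+1-\ep)\varphi+(r-1+\ep)\varphi_C$. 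Lemma \ref{l2est} then guarantees that the fibrewise space $H^0\big(\P(\cF_x),\cO(k)\otimes\cI(e^{-(k+1-\ep)\varphi}|_{\P(\cF_x)})\big)$ is nonzero, while nontriviality of the fibrewise ideal makes the inclusion $\cG\subset\Sym^k\cF^\star|_C$ strict; positivity of the direct image (Theorem \ref{push}) gives $\deg_C\cG\ge 0$, contradicting stability and degree zero of $\Sym^k\cF^\star|_C$.

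Two secondary points. First, your identification $p_\star(K_{\P(E)/X}+\cO(k+r))=\widehat{\Sym}^k\cF^\star$ only holds up to a numerically trivial twist by $\det\cF^\star$ (the paper says ``modulo a topologically trivial line bundle''); this is harmless for slopes but should be stated. Second, the genericity you worry about at the end (base change and the equality $\cI(e^{-c\varphi})|_{\P(E)_x}=\cI(e^{-c\varphi}|_{\P(E)_x})$ for generic $x$) is indeed needed and is handled in the paper by the ``countable union of proper analytic subsets'' caveat; but that bookkeeping is not the main obstacle --- the missing nonvanishing of $\mathcal G$ is. If you repair your argument by splitting the weight as the paper does (or, over a curve, by using the flat metric coming from the Hermite--Einstein condition), your global-slope variant via ``Griffiths-positive singular metric $\Rightarrow$ psef determinant'' can be made to work, but as written the proof has a genuine gap.
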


\begin{remark}{\rm
In this lemma it is not necessary to require $k\geq r$.
Ideally, we want to consider the case $k=1$. The bundle
$\O(r+1)$ is endowed with the metric
$$e^{-(2-\ep)\varphi- (r-1+\ep)\varphi_{HE}}$$
where we assume that we are in the projective case and we restrict $\cF$
to a complete intersection curve $C$. We denote by $e^{-\varphi_{HE}}$ the
metric on $\O(1)$ induced by the flat Hermite-Einstein metric on $\cF|_C$. 
Of course, the direct image is just $\cF|_C$, but it is endowed with a metric depending on $e^{-\varphi}$. 
Since the
determinant is cohomologically trivial, the hope is that this should have
important consequences on the Hessian of $\varphi$ ``in the base directions'', cf. calculations section 4.}
\end{remark}

\noindent The rest of this section is divided into three parts. We will first show that Theorem \ref{thm1proj} follows from Lemma \ref{trivia}. Then we prove the lemma, and finally we give a quick argument for Corollary \ref{coro1proj}.

\subsection{Lemma \ref{trivia} implies Theorem \ref{thm1proj}}
\noindent Now we can prove Theorem \ref{thm1proj}: since $k \geq r$, thanks to the Lemma \ref{trivia}, we know that 
$\cI(e^{- (r+1-\ep)\varphi} |_{\P (E)_x}) = \mathcal{O}_{\P (E)_x}$ for a generic $x\in X$. The inclusion
\begin{equation}\label{eq10}
  p_\star\left(K_{\P(E)/X}\otimes \O(r+1)\otimes \cI(e^{- (r+1-\ep)\varphi})\right) \subset
  p_\star\left(K_{\P({E})/X}\otimes \O(r+1)\right) =\cF^\star
\end{equation} 
is thus generically isomorphic.

As $\O (1)$ is relatively ample, there exists a smooth metric $h_0$ on $\O (1)$ such that $\Theta_{h_0} \O (1) \geq - p^\star\omega_X$ for some K\"{a}hler metric $\omega_X$ on $X$.
Set $h_\ep := e^{- (r+1 -\ep) \varphi} \cdot h_0 ^{\ep}$. It is a metric on $\O (r+1)$ with $\Theta_{h_\ep} (\O (r+1)) \geq - \ep p^\star\omega_X$.
By Corollary \ref{pushep}, $h_\ep$ induces a metric $h_{\ep, \cF^\star}$ on $\cF^\star$ such that
$$i\Theta_{h_{\ep, \cF^\star}} (\cF^\star) \succeq -\ep \omega_X \otimes \Id .$$  
Together with the fact that $c_1 (\cF^\star)=0$, it follows that $\cF^\star$ is numerically flat by Theorem \ref{weakpositivv}.
Note that the stablility of $\widehat{\Sym}^k \cF^\star$ implies that $\cF^\star$ is stable. $\cF^\star$ is thus hermitian flat by using \cite[Thm 1.18]{DPS94}.

\subsection{Proof of Lemma \ref{trivia}}
By restriction theorem of Mehta-Ramanathan, the bundles
\begin{equation}\label{eq2}
\Sym^k \cF^\star|_C
\end{equation}
are still be stable provided that $C$ is a generic complete intersection of $n-1$ divisors in a
large enough multiple of $H$. 

The curves ``$C$" for which the restriction \eqref{eq2} is stable cover a non-empty 
Zariski open subset of $X$. 
We assume that our point $x$ belongs to the union of these curves, so $x\in C$ for some complete intersection curve
which we fix for the rest of the proof. 

Let $h_{C}$ be the Hermite-Einstein metric on the bundle $\displaystyle \cF|_C$. It is of course smooth, and thanks to the first 
hypothesis (i) the curvature tensor $\Theta(\cF|_C, h_C)$ is identically equal to zero. In particular, $h_C$ 
induces a smooth metric on $\O(1)$ whose curvature is semi-positive. Roughly speaking, this is so because the curvature of $\O(1)$ at a point $(y, [v])$ has two components: 
the Fubini-Study form in the directions of the fibers of the map
\begin{equation}\label{eq3}
\pi: \P(\cF|_{C})\to C
\end{equation}
plus the curvature $\Theta(\cF|_C, h_C)$ evaluated in the direction $v$.

\noindent We assume that $x$ is such that the multiplier ideal $\displaystyle \cI(e^{-(k+1-\ep)\varphi}|_{\P({\cF_x})})$ coincides with 
$\displaystyle \cI(e^{-(k+1-\ep)\varphi})|_{\P({\cF_x})}$. Again, this is a genericness requirement which holds true in the complement of a countable union of points of $C$. 

Consider the bundle 
\begin{equation}\label{eq5} 
K_{\P({\cF})/C}+ L.
\end{equation}
We endow $L:= \O(k+r)$ with a the metric whose local weights are 
\begin{equation}\label{eq6}
\psi_L:= (k+1-\ep)\varphi+ (r-1+\ep)\varphi_C;
\end{equation}
the corresponding curvature curent is semi-positive. By standard $L^2$ estimates cf. Lemma \ref{l2est}, the vector space
\begin{equation}\label{eq7} 
H^0\left(\P({\cF_x}), \O(K_{\P({\cF})/C} +L)\otimes \cI(e^{-\psi_L})\right) = H^0\left(\P({\cF_x}), \O( k)\otimes \cI( e^{-(k+1 -\ep)\varphi} |_{\P({\cF_x})})\right) 
\end{equation}
is non-trivial. Moreover, it does not coincides with the space of global sections of $\O(k)|_{\P({\cF_x})}$ if the multiplier ideal sheaf of
the metric $e^{-\psi_L}$ is non-trivial.

By Theorem \ref{push} we infer that the direct image
\begin{equation}\label{eq8}
\cG := \pi_\star\left( (K_{\P({\cF})/C}+ L)\otimes \cI(e^{-\psi_L})\right)
\end{equation}
is semi-positively curved. In particular, its degree with respect to $C$ is semi-positive. 

\noindent On the other hand, we have 
\begin{equation}\label{eq9}
\cG \subset \pi_\star\left(K_{\P({\cF})/C}+ L \right) =\Sym^k \cF^\star
\end{equation}
(modulo a topologically trivial line bundle) and as explained before, the inclusion is strict. Hence the degree of $\cG$ is \emph{strictly smaller} than the 
degree of $\Sym^k\cF^\star$, which is zero. 
The lemma is proved in the projective case.
\smallskip

\begin{question}{\rm
    We see that the stability condition allow us to deduce regularity
    properties of the metric $e^{-\varphi}$ in case $c_1(\cF)= 0$. Can one
formulate (and eventually prove) similar results if $c_1(\cF)$ contains a negative representative? 
}\end{question}

\subsection{Proof of Corollary \ref{coro1proj}} The statement \ref{coro1proj}
follows immediately as combination of Theorem \ref{thm1proj} and the following
algebraicity criteria which is a direct consequence of
\cite{CP19} and for which we refer to cf. \cite{Dr18}.

\begin{thm} Let $X$ be a projective manifold and let $\cF\subset T_X$ be a holomorphic foliation. If $\cF$ is not algebraic, then the tautological line bundle
$\O(1)$ over $\P(E)$ is pseudo-effective. Here we denote by $E$ any desingularization of the sheaf $\cF$.  
\end{thm}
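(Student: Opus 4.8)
\emph{Plan of proof.} I would argue by contraposition: assume that $\cO(1)$ on $\P(E)$ is \emph{not} pseudo-effective, and deduce that $\cF$ is algebraically integrable, i.e.\ the closure of a general leaf of $\cF$ is an algebraic subvariety of $X$. To begin, I would fix one desingularization $\pi:\wh X\to X$ of $\cF$, so that $\pi$ is a composition of blow-ups with smooth centres and $E=\pi^\star\cF/(\text{torsion})$ is locally free on $\wh X$; since any two such desingularizations are isomorphic over the Zariski-open set where $\cF$ is locally free, and the modifications are supported over the singular locus of $\cF$, the pseudo-effectivity of $\cO(1)$ is independent of the choice, so it suffices to work with this fixed $\pi$.

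The core of the argument is to convert the failure of pseudo-effectivity into the positivity hypothesis of the algebraicity theorem of \cite{CP19}. By the duality between the pseudo-effective cone of divisor classes and the cone of movable curve classes, ``$\cO(1)$ not pseudo-effective'' supplies a movable curve class $\gamma$ on $\P(E)$ with $c_1(\cO(1))\cdot\gamma<0$. Exploiting the structure of the movable cone on the projectivized bundle $p:\P(E)\to\wh X$ --- a suitable movable $\gamma$ can be taken of the form induced by a movable class $\wh\alpha^{\,\dim X-1}$ on $\wh X$ together with an appropriate self-intersection of $c_1(\cO(1))$ --- this negativity is equivalent, after restriction to a general complete intersection curve $C\subset X$ of large multi-degree (with strict transform $\wh C\subset\wh X$, over which $\pi$ is an isomorphism, so $E|_{\wh C}\cong\cF|_C$), to the statement that $\cF|_C$ contains a subsheaf of positive degree; equivalently, $\cF$ has positive slope with respect to a movable class $\alpha$ on $X$ (the $\pi$-exceptional divisors do not affect these slopes, since we work with $\pi$-pulled-back movable classes).

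Finally, I would invoke \cite{CP19}: a foliation $\cF\subseteq T_X$ on a projective manifold which carries a subsheaf of positive slope with respect to some movable class is algebraically integrable, with the closure of a general leaf of its algebraic part rationally connected --- together with the reduction (passing to the quotient by the algebraic part of $\cF$, which is again governed by the same positivity) that upgrades this to algebraic integrability of $\cF$ itself; see also \cite{Dr18}. This contradicts the hypothesis that $\cF$ is not algebraic, proving the theorem. I expect the main obstacle to be precisely the middle step: making rigorous the dictionary between ``$\cO(1)$ not pseudo-effective'' and ``$\cF$ has a positive-slope subsheaf for some movable class'', including the behaviour of Harder--Narasimhan slopes under the pullback $\pi$ and under restriction to general complete intersection curves, as well as tracking the passage from positive slope to full algebraic integrability --- all of which is exactly the content imported from \cite{CP19} and detailed in \cite{Dr18}.
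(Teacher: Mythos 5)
Your overall plan --- argue the contrapositive and feed a slope statement into \cite{CP19} --- is indeed the route the paper intends: the paper gives no independent argument and simply refers to Section 4.1, pp.\ 14--18 of \cite{CP19}. But the dictionary you set up between non-pseudo-effectivity of $\cO(1)$ and slopes of $\cF$ is the wrong one, and this breaks the proof. What you extract is that $\cF$ \emph{contains a subsheaf of positive slope} for some movable class, i.e.\ $\mu_{\alpha,\max}(\cF)>0$, and you then quote \cite{CP19} as saying that a foliation with this property is algebraically integrable. That is not what \cite{CP19} proves: their Theorem 1.1 needs $\mu_{\alpha,\min}(\cF)>0$, i.e.\ \emph{every} torsion-free quotient of $\cF$ has positive slope. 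Under the weaker hypothesis $\mu_{\alpha,\max}(\cF)>0$ only the positive part of the Harder--Narasimhan filtration is an algebraically integrable foliation, and your proposed upgrade (``pass to the quotient by the algebraic part, which is governed by the same positivity'') fails: the quotient inherits no positivity, and the intermediate claim is simply false. For instance, on $X=\P^1\times A$ with $A$ an abelian surface, take $\cF=T_{X/A}\oplus L\subset T_X$ where $L\subset T_A$ is a linear foliation with non-algebraic leaves; $\cF$ is a foliation containing the positive-slope subsheaf $T_{X/A}$, yet it is not algebraic (consistently with the theorem, $\cO_{\P(\cF)}(1)$ is effective in this example, via the quotient $\cF\to L\cong\cO_X$). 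With the paper's convention (points of $\P(E)$ are lines in the fibres, so $p_\star\cO(m)=\wh\Sym^m E^\star$), the invariant that pseudo-effectivity of $\cO(1)$ detects is $\mu_{\min}$ of $E$, not $\mu_{\max}$: already over a curve one has that $\cO_{\P(E)}(1)$ is pseudo-effective if and only if $\mu_{\max}(E^\star)\ge 0$, i.e.\ $\mu_{\min}(E)\le 0$. So from ``$\cO(1)$ not psef'' you must produce a movable class $\alpha$ with $\mu_{\alpha,\min}(\cF)>0$ before invoking \cite{CP19}.

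Second, even for the corrected dictionary your reduction is not justified. BDPP duality gives a movable curve class $\gamma$ on $\P(E)$ with $c_1(\cO(1))\cdot\gamma<0$, but such a $\gamma$ need not be of the special product form $p^\star\wh\alpha^{\,n-1}\cdot c_1(\cO(1))^{r-1}$ that you want, and pseudo-effectivity cannot be tested against complete intersection curves of $X$ alone: the movable cone is generated by pushforwards of complete intersections of ample divisors on \emph{modifications}, and a non-psef class can well be non-negative on every complete intersection curve of the variety itself. Converting ``$\cO(1)$ is not pseudo-effective'' into ``$\mu_{\alpha,\min}(\cF)>0$ for some movable $\alpha$'' is precisely the technical content of \cite[Section 4.1]{CP19} that the paper cites (and it is also why the K\"ahler case of Theorem \ref{thm1} in this paper requires the new direct-image results rather than a restriction argument). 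As written, your sketch both assumes this hard step and lands on the wrong slope inequality, so the final appeal to \cite{CP19}, \cite{Dr18} does not go through.
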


\noindent Indeed, this result does not appears explicitly in \cite{CP19}
but it is quickly deduced from the proof of \cite[Thm 1.1]{CP19}.
More precisely this is proved in section 4.1, pages 14-18 of \emph{loc.cit.}
  
\section{Proof of the main result: the general case}

\noindent We establish in the next subsection Theorem \ref{thm0}. This statement can be seen as a subharmonicity property of direct images and it represents the new technical result in the present article. Once this is done we show that the K\"ahler version of Lemma \ref{trivia} follows. Further results based on Theorem \ref{thm0} will be given in the next section.
\smallskip

\noindent A good point to start with is the remark that if $h_L$ is non-singular and if $p$ is a submersion,
then \eqref{intr040} follows 
directly from the curvature formula \cite[(4.8)]{Ber09}.

\noindent Indeed, let $x\in X$ be an arbitrary point
and let $\displaystyle u\in H^0 (Y_x, K_{Y_x} +L|_{Y_x})$ be a holomorphic section. 
Since $h_L$ and $p$ are smooth, we can find a section
$$\wh u\in H^0 \left(U, p_\star(K_{Y/X}+ L)\right)$$
extending $u$ as in \emph{loc. cit.}, \cite[Prop 4.2]{Ber09}. Here we denote by $U$ a
small coordinate set centered at $x$.

\noindent The equality \cite[(4.8)]{Ber09} implies that we have
$$\left\langle\Theta_{h_{Y/X}} \left(p_\star(K_{Y/X}+ L)\right)u,u\right\rangle \wedge \omega_X ^{n-1}\geq c_{n}\int_{Y_x} \widehat{u}\wedge \overline{\widehat{u}} \wedge \Theta_{h_L}(L)
\wedge p^\star (\omega_X ^{n-1}) e^{-\phi_L} .$$
Since by hypothesis \eqref{intr030} we have,
$$ \Theta_{h_L}\wedge p^\star \omega_X ^{n-1}\geq \ep_0 p^\star \omega_X ^{n}$$
it follows that
$$\left\langle \Theta_{h_{Y/X}} \left(p_\star(K_{Y/X}+ L)\right)u,u\right\rangle \wedge \omega_X ^{n-1} \geq \ep_0 \|u\|^2 _{h_{Y/X}} \omega_X ^{n} $$
\noindent at $x$. We obtain thus \eqref{intr040} if $h_L$ is smooth by taking the trace.

\subsection{A subharmonicity property of direct images}
In general case the metric $h_L$ is not necessary smooth, but nevertheless the function
$\displaystyle \log|\sigma|_{h_{Y/X}^\star}^2$ is still psh for any holomorphic
section $\sigma$ of $\cE^\star$ defined on some open subset $U$ of $X$. This is the content of Theorem \ref{push} we have recalled in Section 2.
\medskip

\noindent The next step is to show that
under the assumptions of Theorem \ref{thm0} the following holds.

\begin{claim}\label{mainc} Let $\sigma$ be a local holomorphic section of the dual bundle $\cE^\star$. Then we have
\begin{equation}\label{sh1}
dd^c\log|\sigma|_{h_{Y/X}^\star}^2\wedge \omega_X^{n-1}\geq \ep_0\omega_X^n
\end{equation}
in the sense of currents on $X$.
\end{claim}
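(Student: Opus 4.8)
\textbf{Plan for the proof of Claim \ref{mainc}.}

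The plan is to reduce the statement to a purely local computation on a product, following the strategy announced after Theorem \ref{thm0}. First I would note that the inequality \eqref{sh1} is local on $X$, and that by Berndtsson--P\u aun type localization arguments (cf.~\cite{Ber06}, \cite{BP08}), together with the fact that $p$ is locally projective, it suffices to treat the model situation $p: U\times V\to U$ where $U$ is a small ball in $\CC^n$, $V$ is a Stein manifold (a fiber of $p$, made Stein by restricting to a suitable relatively compact sublevel set), and $p$ is the first projection. Here one uses that the $L^2$-metric $h_{Y/X}$ and its curvature behave well under such localization, and that the defining property of \eqref{sh1} — psh-ness of $\log|\sigma|^2_{h_{Y/X}^\star}$ with the prescribed lower bound on its Laplacian against $\omega_X^{n-1}$ — can be checked after this reduction. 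In the model case a local holomorphic section $\sigma$ of $\cE^\star$ corresponds, by duality for direct images, to a holomorphic family $\{u_t\}_{t\in U}$ of $L^2$ holomorphic $n$-forms with values in $L$ on $\{t\}\times V$, and $|\sigma|^2_{h_{Y/X}^\star}$ is realized as the extremal value $\inf_t$ or via the Bergman-type pairing; concretely $\log|\sigma(t)|^2_{h^\star_{Y/X}}$ equals $\log$ of the $L^2$-norm of an optimal extension, which one wants to estimate from below.

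The key step is then to compute $dd^c$ of this function and extract the $\Theta_{h_L}(L)\wedge p^\star\omega_X^{n-1}$ term. I would differentiate under the integral sign twice in the base variable $t\in U$, using the minimizing/extension property to kill the first-order terms and to control the second-order ``curvature'' terms; this is exactly the mechanism in Berndtsson's curvature formula \cite[(4.8)]{Ber09} recalled in the excerpt, except that here $h_L$ is only assumed semi-positive as a current and only the trace-type positivity \eqref{intr030} is available rather than the full Nakano/Griffiths positivity. So after the formal computation one arrives at a lower bound of the shape
\begin{equation*}
dd^c\log|\sigma|^2_{h_{Y/X}^\star}(t)\wedge \omega_X^{n-1}\;\geq\; c\int_{V}\widehat u_t\wedge\overline{\widehat u_t}\wedge \Theta_{h_L}(L)\wedge p^\star\omega_X^{n-1}\,e^{-\phi_L}\Big/ \|u_t\|^2,
\end{equation*}
and then \eqref{intr030} converts the right-hand side into $\ep_0\,\omega_X^n$ exactly as in the smooth submersion case worked out just before the claim. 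The passage from the smooth model to the general singular $h_L$ is handled by a regularization: approximate $h_L$ by smooth metrics $h_{L,\nu}$ with $\Theta_{h_{L,\nu}}\geq -\delta_\nu p^\star\omega_X$ and $\delta_\nu\to 0$, apply the smooth estimate, and pass to the limit using the semicontinuity of the relevant Bergman kernels / $L^2$-metrics and the fact that $\cI(h_L)$ controls the limit (the multiplier ideal hypothesis built into the definition of $\cE$ in \eqref{intr02}).

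The main obstacle I expect is precisely making the localization to $p:U\times V\to U$ rigorous while keeping track of the multiplier ideal $\cI(h_L)$ and of the fact that the singular metric $h_L$ need not extend nicely to a neighborhood of a chosen Stein sublevel set; one must choose the exhaustion of the fiber compatibly with $h_L$, ensure the extension section $\widehat u$ exists with controlled norm (an Ohsawa--Takegoshi type input), and verify that no positivity is lost in the limit — the inequality \eqref{sh1} is an equality-of-currents statement, so one needs the lower bound to survive weak limits, which is fine since it is a one-sided bound, but one must still justify that the left-hand side of \eqref{sh1} for $h_L$ is $\geq$ the limit of the left-hand sides for $h_{L,\nu}$. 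A secondary technical point is that $h_{Y/X}$ is only Griffiths semi-positive (not Nakano), so one cannot invoke the full strength of Berndtsson's formula; one must isolate exactly the ``$(1,1)$ against $\omega_X^{n-1}$'' part of the curvature, which is the part that \emph{is} controlled, and this is why the hypothesis is phrased as \eqref{intr030} rather than \eqref{intr03007}.
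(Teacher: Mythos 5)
Your overall strategy matches the paper's: reduce to the model $p: U\times V\to U$, realize $|\sigma|^2_{h^\star_{Y/X}}$ as a fiberwise Bergman-kernel-type quantity, and extract the trace $\Theta_{h_L}(L)\wedge p^\star\omega_X^{n-1}$ from a second-order computation in the base variable. But there is a genuine gap in your regularization step, and it is precisely the step where the trace hypothesis \eqref{intr030} is at stake.

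You propose to approximate $h_L$ by smooth metrics $h_{L,\nu}$ satisfying $\Theta_{h_{L,\nu}}(L)\geq -\delta_\nu\,p^\star\omega_X$ and then pass to the limit. This is the standard Demailly-type regularization, and it controls the full $(1,1)$-positivity, but it does \emph{not} preserve the hypothesis \eqref{intr030}: there is no reason that $\Theta_{h_{L,\nu}}(L)\wedge p^\star\omega_X^{n-1}\geq(\ep_0-o(1))\,p^\star\omega_X^n$ should hold for the regularized metric, because wedging with $\omega_X^{n-1}$ and taking convolutions do not commute unless the coefficients of $\omega_X^{n-1}$ are (locally) constant. Since \eqref{intr030} is exactly what feeds the estimate, the whole chain breaks at this point. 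The paper circumvents this by covering the base with small balls $B(x_i,\ep)$ on which $\omega_X$ is $\delta_\ep$-comparable to a \emph{flat} metric $\omega_i$ (cf. \eqref{reg1}), transferring the trace inequality to one against $\omega_i^{n-1}$ (cf. \eqref{reg2}), and then regularizing $\varphi$ by fiberwise convolution via a Stein embedding of the fiber and a holomorphic retraction (cf. \eqref{reg133}--\eqref{reg134}); because $\omega_i$ has constant coefficients, convolution preserves \eqref{reg131}. You need this, or an equivalent device, and it is not a cosmetic detail.

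A secondary point: in the smooth Stein model the paper does \emph{not} invoke Berndtsson's curvature formula \cite[(4.8)]{Ber09} as a black box (the formula as stated is for direct images with compact fibers, and here the fiber is open and $\dim\cE=\infty$). Instead it carries out a direct differential computation on $B_t\langle a,a\rangle$ (section 4.1.2), where the heart of the matter is (a) a H\"ormander estimate controlling the term $\Lambda_\alpha$ in \eqref{sh30} since $\bar\partial\Lambda_\alpha=-\Gamma\sum\varphi_{t_\alpha\bar\xi_k}d\bar\xi_k$ and $\Lambda_\alpha\perp$ holomorphic forms, and (b) the identification of the resulting quantity \eqref{sh35} with $\frac{n}{d+1}\frac{(dd^c\varphi)^{d+1}\wedge p^\star\omega^{n-1}}{(dd^c\varphi)^{d}\wedge p^\star\omega^n}$, which is then bounded below by $n\ep_0$ via the algebraic argument \eqref{sh70}--\eqref{sh73}. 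Your sketch compresses this into ``differentiate under the integral and kill first-order terms'', which is the right spirit, but the inequality you write down is not a formal consequence of Griffiths positivity and really does require carrying out that computation (together with Lemma \ref{LGG} to get from the $B$-inequality to the $\log B$-inequality). Also, the dual norm $|\sigma|^2_{h^\star_{Y/X}}$ is realized as a \emph{sup} over sections of $\cE$, i.e.\ as a Bergman-kernel evaluation $B\langle a,a\rangle$ via \eqref{sh80}, not as ``the $L^2$-norm of an optimal extension''; the latter phrasing suggests an Ohsawa--Takegoshi minimization, which is the wrong side of the duality.
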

\medskip

\noindent Prior to explaining the proof of this claim, we see here that it implies
the inequality \eqref{intr040}. To this end, we are using a trick from the article \cite{HPS}, which
consists in considering the relative adjoint bundle
\begin{equation}\label{sh2}
K_{\P(\cE^\star)/X_0}+ \O(r)
\end{equation}
on $\displaystyle \P(\cE^\star)$, where $X_0\subset X$ is the maximal subset
of $X$ such that the restriction $\displaystyle \cE|_{X_0}$ is a vector bundle.
The bundle $\O(1)$ is endowed with the metric $h$ induced from $h_{Y/X}$. Then the relation \eqref{sh1} implies that we have
\begin{equation}\label{sh3}
\Theta_h(\O(1))\wedge \pi^\star\omega_X^{n-1}\geq \ep_0 \pi^\star\omega_X^{n}.
\end{equation}
Now we apply the Claim one more time, for the direct image sheaf
\begin{equation}\label{sh4}
\pi_\star\left(K_{\P(\cE^\star)/X}+ \O(r)\right)
\end{equation}
which coincides with $\displaystyle \det \cE|_{X_0}$.
Since $h$ is induced by the metric $h_{Y/X}$, it follows by the definition of a singular metric the corresponding multiplier sheaf is trivial over a large open subset of $\P(\cE^\star)$.   
Then we have
\begin{equation}\label{sh5}
\chi_{X_0}\Theta_{\det h_{Y/X}}( \det \cE)\wedge \omega_X^{n-1}\geq r\ep_0\omega_X^{n}.
\end{equation}
in the sense of currents on $X_0$ as consequence of \eqref{sh1}, where $\chi_{X_0}$ is the characteristic function
of the set $X_0\subset X$. Indeed this follows from the fact that 
\begin{equation}\label{sh2002}
T:= \Theta_{\det h_{Y/X}}( \det \cE)\wedge \omega_X^{n-1}
\end{equation}
is a closed positive current on $X$, and we clearly have  
\begin{equation}\label{sh2003}
T\geq \lim_{k\to\infty}v_k\Theta_{\det h_{Y/X}}( \det \cE)\wedge \omega_X^{n-1}
\end{equation}
for a sequence $(v_k)$ converging to $\chi_{X_0}$, cf. \cite{agbook}. For each $k$ we have
\begin{equation}\label{sh2004}
v_k\Theta_{\det h_{Y/X}}( \det \cE)\wedge \omega_X^{n-1}\geq v_k r\ep_0\omega_X^{n}
\end{equation}
as currents of bi-degree $(n,n)$ on $X$. Summing up, we have
\begin{equation}\label{sh2005}
T\geq \lim_{k\to \infty}v_k r\ep_0\omega_X^{n}
\end{equation}
and this is equivalent to the inequality \eqref{intr040}.
\smallskip

\begin{remark}{\rm
It is absolutely crucial that the lower bound \eqref{sh1} is the
same as the lower bound of the trace of the curvature of $(L, h_L)$ we have by hypothesis in Theorem \ref{thm01}.} 
\end{remark}

\noindent We detail now the proof of Claim \ref{mainc}.
An important point in our argument here is that the function
$\displaystyle \log|\sigma|_{h_{Y/X}^\star}^2$ is \emph{already} known to be
psh.
It would be therefore enough to show that the inequality
\eqref{sh1} holds true in the complement of an analytic subset of $X$.
In particular, we can assume that $p$ is smooth, and that $\cE$ is a trivial vector bundle of rank $r$.

\subsubsection{Reduction to a local statement} We invoke here the regularization arguments in \cite{BP08} in order to reduce our claim to a subharmonicity property of
fiberwise Bergman kernels for Stein submanifolds in $\CC^n$.

 Let $s_1, s_2, \cdots, s_r$
be a set of sections of the fibration $p: p^{-1} (U)\rightarrow U$ and let $a_i$ be local sections of $\Lambda^{n}T_{Y/X}\otimes L^{-1}$ defined locally near the image of $s_i$.
We define the Bergman kernel-type function on $U$ as follows
\begin{equation}\label{sh6}
  B \langle a, a\rangle :=\sum_{j,k} a_j \overline{a}_k B (s_j , s_k) ,
\end{equation}
where 
\begin{equation}\label{sh7}
  B (s_j (y), s_k (y) ) =\sum_i u_i (s_j (y)) \otimes \overline{u}_i (s_k (y)),
\end{equation}  
and $\{u_1, \dots, u_r\}$ is an orthogonormal basis of
$H^0\left(X_y,  (K_{X/Y}+ L)\otimes \cI(h_{\phi_L})\right)$. Also, we are using the same notation $a_i$ for the section
$a_i$ evaluated at $s_i$. Therefore the expression \eqref{sh6} is indeed a function on $U$.
\smallskip

\noindent The link between \eqref{sh6} and our problem is as follows.
We consider the expression
\begin{equation}\label{sh80}
\xi_y:= \sum a_j(s_j(y)){\rm ev}_{s_j(y)}
\end{equation}
which is a local holomorphic section of the dual bundle $\cE^\star$.
Conversely, as observed in \cite{BP08} any local section of $\cE^\star$ can be obtained in this manner.
Moreover, the norm of the section \eqref{sh80}
with respect to the metric $h_{Y/X}^\star$ is precisely $B \langle a, a\rangle$.
This is very important, because it shows that $B \langle a, a\rangle$ is an
\emph{extremal function}.
\smallskip

\noindent It would be therefore
sufficient to show that for every such $a$ we have 
\begin{equation}\label{equberg}
\ddc \log B \langle a, a\rangle \wedge \omega_X ^{n-1} \geq \ep_0 \omega_X ^n 
\end{equation}
on $U$. Actually, thanks to a standard trick which we recall in Lemma \ref{LGG}
below it is enough to show the -equivalent- inequality
\begin{equation}\label{equberg1}
\ddc B \langle a, a\rangle \wedge \omega_X ^{n-1} \geq \ep_0 B \langle a, a\rangle \omega_X ^n 
\end{equation}
\medskip

\noindent It is at this point that we need our map $p$ to be locally projective.
In this case we can use the regularization procedure in \cite{Ber06}, \cite{BP08}
and reduce our problem (i.e. the inequality \eqref{equberg1} above) to a local
situation in which we can solve it by a direct computation. We will therefore recall the main steps of the relevant part of \cite{BP08} and explain the way in which this is implemented in our case.
\smallskip

\noindent $\bullet$ According to the hypothesis, given any point $t_0\in X$ there exists an open subset $U\subset X$ containing $x_0$ such that 
\begin{equation}\label{reg4}
D:= p^{-1}(U)\setminus H
\end{equation}
is a Stein manifold,
where $H$ is a hypersurface. We can therefore replace our initial map with a Stein fibration
\begin{equation}\label{reg8}
p: D\to U.
\end{equation}
The
corresponding Bergman kernel $\displaystyle B\langle a, a\rangle$
is
defined as in \eqref{sh6} except that we replace the finite dimensional basis
$u_1,\dots, u_r$ with a Hilbert basis of the space of holomorphic $L^2$ top forms on $D_t:= D\cap p^{-1}(t)$ with respect to the $L^2$ norm
\begin{equation}\label{reg7}
\Vert u\Vert^2_{t}:= \int_{D_t}c_nu\wedge \ol ue^{-\varphi_L}.
\end{equation}
\smallskip 

\noindent $\bullet$ Let $\rho$ be a psh exhaustion function for the Stein manifold $D$. For $c$ large enough, the image of the sections $s_i$ will be contained
in $D^c:= (\rho < c)$. Let $B_c\langle a, a\rangle$ be the function \eqref{sh6}
associated to the domain $D^c$. By the extremal characterization of $B\langle a, a\rangle$ and $B_c\langle a, a\rangle$ respectively, we see that it would be enough to show that we have
\begin{equation}\label{reg6}
\ddc B_c \langle a, a\rangle \wedge \omega_X ^{n-1} \geq \ep_0 B_c \langle a, a\rangle \omega_X ^n 
\end{equation}
in the sense of currents on $D^c$. The inequality \eqref{equberg1} would
follow
as $c\to \infty$. 

\smallskip

\noindent $\bullet$ Let $c$ be
a regular value for the function $\rho$. There exists a biholomorphism
\begin{equation}\label{reg9}
\Phi: D_{t_0}^{c+ 1/2}\times U\to \Omega
\end{equation}
 such that  
\begin{equation}\label{reg10}
D_t^{c}\subset \Phi\big(D_{t_0}^{c+ 1/2}\times \{t\}\big)\subset D_t^{c+ 1}.   
\end{equation}
This map is obtained as usual, by considering the flow associated to the
holomophic lifting of vector fields of $U$ (we might be forced to shrink $U$,
but this is fine since the estimates we have to prove are independent of the size of this set).
\smallskip

\noindent We can assume that the bundle $L$ is trivial when restricted to $D$, and that the weight of the metric $h_L$ is given by the psh function $\varphi$.
Via the map \eqref{reg9}, the curvature hypothesis \eqref{intr030} becomes
\begin{equation}\label{reg11}
\ddc\phi\wedge \pi^\star\omega^{n-1}\geq \ep_0 \pi^\star\omega^{n}
\end{equation}
where
\begin{equation}\label{reg12}
\phi:= \varphi\circ \Phi,\qquad \omega:= \omega_X|_U. 
\end{equation}
and $\pi$ is the projection on the second factor of $D_{t_0}^{c+ 1/2}\times U$.
\smallskip

\noindent Let $\displaystyle \big(B(x_i, \ep)\big)_{i=1,\dots N_\ep}$ be a covering of the base $X$ with balls of radius $\ep$. On each such ball we certainly have
\begin{equation}\label{reg1}
(1-\delta_\ep)\omega_i\leq \omega_X\leq (1+ \delta_\ep)\omega_i
\end{equation}
where $\omega_i$ is a flat metric on $B(x_i, \ep)$, and
$\delta_\ep\to 0$ as $\ep\to 0$.

\noindent Then we have
\begin{equation}\label{reg2}
dd^c\phi\wedge \pi^\star\omega_i^{n-1}\geq \ep_0\frac{(1-\delta_\ep)^{n-1}}{(1+\delta_\ep)^{n}}\pi^\star\omega_i^{n}
\end{equation}
for each index $i$ such that $x_i\in U$ (so we assume that $\Phi$ is defined on a domain slightly bigger, which is again harmless).
\smallskip

\noindent
Now the point is that the inequality \eqref{reg2} behaves well with respect to convolutions on the inverse image of
$B(x_i, \ep)$ --since $\omega_i$ is flat-- in the sense that we have
\begin{equation}\label{reg131}
dd^c\phi_\delta\wedge \pi^\star\omega_i^{n-1}\geq \ep_0\frac{(1-\delta_\ep)^{n-1}}{(1+\delta_\ep)^{n}}\pi^\star\omega_i^{n}
\end{equation}
for a monotonic sequence of smooth psh functions $(\phi_\delta)$ converging to $\phi$. This can be seen e.g. as follows. We first use the fact that the Stein manifold
$D_{t_0}^{c+ 1/2}$ can be embedded in an Euclidean space $\CC^N$ (in our case this is much simpler, since it is a domain in a projective manifold). The image of the
embedding
\begin{equation}\label{reg133}
D_{t_0}^{c+ 1/2}\to \CC^N
\end{equation}
has a Stein neighborhood $W$ by a result of Siu \cite{}, and moreover we have a holomorphic retract $\mu: W\to D_{t_0}^{c+ 1/2}$. We can regularize the function 
\begin{equation}\label{reg134}
\wt \phi: U\times W\to \R\cup\{-\infty\} \qquad \wt \phi(t, w) := \phi\big(t, \mu(w)\big)
\end{equation}
by the usual convolution kernel, and $\phi_\delta$ is the restriction of the convolution to $U\times D_{t_0}^{c+ 1/2}$. Given this explicit definition of $\phi_\delta$,
the inequality \eqref{reg131} becomes obvious.

\smallskip

\noindent $\bullet$ It is proved in \cite{BP08} that there exists a sequence
of smooth psh functions $\psi_j$ such that if we denote by
$\displaystyle B_{j,\delta}\langle a, a\rangle$ the Bergman kernel-type function induced fiberwise by $e^{-\varphi_\delta- \psi_j}$, then the limit
\begin{equation}\label{reg14}
\lim_jB_{j,\delta}\langle a, a\rangle|_{D_{t_0}^{c+ 1/2}\times \{t\}}
\end{equation}
is \emph{equal} to the Bergman kernel on the fiber $D_t$ of our initial map $p$
with respect to the weight $\phi_\delta\circ \Phi^{-1}$. The role of the sequence
$(\psi_j)$ is thus to ``erase'' the difference between the domains in \eqref{reg10}.  This is first proved for points $t$ in the complement of measure zero set,
and then in general, cf. pages 355-356 in \cite{BP08}. A last observation at this point is that we also have
\begin{equation}\label{reg15}
dd^c(\phi_\delta+ \psi_j)\wedge \pi^\star\omega_i^{n-1}\geq \ep_0\frac{(1-\delta_\ep)^{n-1}}{(1+\delta_\ep)^{n}}\pi^\star\omega_i^{n}
\end{equation}
for every $j$, since $\psi_j$ is psh.  

\smallskip

\noindent $\bullet$ We show in the next subsection that we have
\begin{equation}\label{equberg2}
\ddc B_{j,\delta} \langle a, a\rangle \wedge \omega_i ^{n-1} \geq \ep_0 \frac{(1-\delta_\ep)^{n-1}}{(1+\delta_\ep)^{n}}B_{j,\delta}\langle a, a\rangle \omega_i ^n 
\end{equation}
on $B(x_i, \ep)$. By using \eqref{reg1} and the fact that $B \langle a, a\rangle $ is psh, we infer that we have 
\begin{equation}\label{reg3}
\ddc B \langle a, a\rangle \wedge \omega_X ^{n-1} \geq \ep_0 \frac{(1-\delta_\ep)^{2n-2}}{(1+\delta_\ep)^{2n}}B \langle a, a\rangle \omega_X ^n   
\end{equation}
and we are done, provided that we are able to establish \eqref{equberg2}. This will be done in the next subsection.  
\medskip

\subsubsection{The local computation} As consequence of the discussion in the
previous subsection, it is enough to consider the following set-up:
\begin{enumerate}
\smallskip
  
\item[(1)] The map $p: U\times V\to U$ is simply the projection on the $1^{\rm st}$ factor. Here $U$ is the unit ball in $\CC^n$ and $V$ is a Stein manifold.
\smallskip
  
\item[(2)] We denote by $t_1,\dots, t_n$ the coordinate functions on $U$, and
we use the notation $z_1,\dots, z_d$ for coordinates on $V$ at some point.
\smallskip
  
\item[(3)] We have a smooth psh function $\varphi$ on $U\times V$ such that
\begin{equation}\label{sh9}\nonumber
dd^c\varphi\wedge p^\star\omega^{n-1}\geq \ep_0p^\star\omega^{n}
\end{equation}
point-wise on $U\times V$.
\smallskip
  
\item[(4)] For each $t\in U$ we denote by $B_t$ the Bergman kernel on
\begin{equation}\label{sh10}\nonumber
  \left(\{t\}\times V, e^{-\varphi(t, \cdot)}\right) 
\end{equation}
corresponding to $(d, 0)$ holomorphic forms.
\smallskip
  
\item[(5)] Let $s_i: U\to V$ be a set of holomorphic functions (i.e. sections of the projection map $p$), where $i=1,\dots r$. We also consider the holomorphic sections $a_i$ of $\displaystyle s_i^\star\Lambda^dT_V$ and we define
\begin{equation}\label{sh11}
B_t\langle a, a\rangle := \sum_{k, p=1}^ra_k(t)\ol{a_p(t)}B_t\big(s_k(t), s_p(t)\big)
\end{equation}
\end{enumerate}
\smallskip

\noindent Now we will show that we have
\begin{equation}\label{sh12}
dd^cB_t\langle a, a\rangle \wedge \omega^{n-1}\geq \ep_0B_t\langle a, a\rangle \omega^n
\end{equation}
at each point $t\in U$. This would be indeed sufficient, thanks to the following standard fact.

\begin{lemme}\label{LGG}
Assume that \eqref{sh12} holds true. Then we have
\begin{equation}\label{sh13}
dd^c\log B_t\langle a, a\rangle \wedge \omega^{n-1}\geq \ep_0\omega^n
\end{equation}  
\end{lemme}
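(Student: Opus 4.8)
The plan is to derive the logarithmic inequality \eqref{sh13} from the non-logarithmic one \eqref{sh12} by a pointwise computation at a fixed $t_0\in U$, using the fact that $B_t\langle a,a\rangle$ is already known to be plurisubharmonic (being the squared norm of the holomorphic section $\xi$ of $\cE^\star$ with respect to the negatively curved metric $h_{Y/X}^\star$). Write $f:=B_t\langle a,a\rangle$ for brevity. The identity I would use is the standard one
\begin{equation*}
dd^c\log f=\frac{dd^c f}{f}-\frac{df\wedge d^cf}{f^2}=\frac{1}{f}\left(dd^c f-\frac{df\wedge d^cf}{f}\right),
\end{equation*}
valid wherever $f>0$ (and $f>0$ everywhere relevant since it is an extremal/Bergman-type quantity that does not vanish unless the section does, in which case the statement is vacuous). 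Thus \eqref{sh13} is equivalent to
\begin{equation*}
dd^c f\wedge\om^{n-1}-\frac{df\wedge d^cf}{f}\wedge\om^{n-1}\ge \ep_0 f\,\om^n,
\end{equation*}
and by \eqref{sh12} it suffices to prove the purely pointwise inequality $df\wedge d^cf\wedge\om^{n-1}\ge 0$ as a real $(n,n)$-form at $t_0$.

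The key step is therefore to observe that for any smooth real-valued function $f$, the form $df\wedge d^cf\wedge\om^{n-1}$ is a non-negative multiple of the volume form $\om^n$ whenever $\om$ is a positive $(1,1)$-form. Indeed $df\wedge d^cf$ is itself a semi-positive $(1,1)$-form of rank at most one: writing $\d f=\sum_\alpha f_\alpha\,dt_\alpha$ one has $\tfrac{i}{2}\d f\wedge\overline{\d f}=\tfrac{i}{2}\sum_{\alpha,\beta}f_\alpha\overline{f_\beta}\,dt_\alpha\wedge d\bar t_\beta\ge 0$, and the wedge of two semi-positive $(1,1)$-forms with $\om^{n-1}$ is a non-negative $(n,n)$-form (this is the elementary Lefschetz-type positivity: if $T\ge 0$ is $(1,1)$ and $\om>0$, then $T\wedge\om^{n-1}=(\tr_\om T)\,\om^n\ge 0$). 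Applying this with $T=df\wedge d^cf$ gives exactly $df\wedge d^cf\wedge\om^{n-1}\ge 0$, hence $\tfrac{1}{f}df\wedge d^cf\wedge\om^{n-1}\ge 0$, and combining with \eqref{sh12} divided by $f>0$ yields \eqref{sh13}.

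There is essentially no serious obstacle here; the only mild point to be careful about is that \eqref{sh12} is an inequality of currents (since a priori $f$ is only known to be psh, not smooth), so the manipulation $dd^c\log f=f^{-1}(dd^cf-f^{-1}df\wedge d^cf)$ should be justified in the distributional sense. In the local computation set-up of this subsection, however, $\varphi$ is smooth and the fibers $V$ are fixed, so $f=B_t\langle a,a\rangle$ is a smooth function of $t$ (smoothness of fiberwise Bergman kernels in the parameter, for a smooth family with fixed Stein fiber and smooth weight, is classical — see \cite{Ber06}, \cite{BP08}); thus all the computations above are genuinely pointwise and the argument is complete. One then records that \eqref{sh13}, together with \eqref{reg1}, propagates back through \eqref{reg3} to give \eqref{equberg} on $U$, which is what was needed for Claim \ref{mainc}.
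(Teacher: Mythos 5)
Your proof has a genuine sign error that invalidates the whole argument. You correctly establish that $df\wedge d^cf\wedge\omega^{n-1}\ge 0$ as a pointwise $(n,n)$-form (indeed $df\wedge d^cf$ is a semi-positive $(1,1)$-form of rank at most one). But that positivity works \emph{against} the conclusion, not for it. From the identity $dd^c\log f = f^{-1}dd^cf - f^{-2}df\wedge d^cf$ (which you write correctly) and the assumption $dd^cf\wedge\omega^{n-1}\ge\ep_0 f\,\omega^n$, all you can deduce is
\begin{equation*}
dd^c\log f\wedge\omega^{n-1}\;\ge\;\ep_0\,\omega^n\;-\;\frac{1}{f^2}\,df\wedge d^cf\wedge\omega^{n-1},
\end{equation*}
and the last term is being \emph{subtracted}; since it is non-negative, this gives no lower bound at all, let alone $\ep_0\omega^n$. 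In other words, you need $df\wedge d^cf\wedge\omega^{n-1}\le 0$ for your reduction to close, but the truth is the opposite inequality. Note also that the premise ``$f$ positive and psh $\Rightarrow$ $\log f$ psh'' is simply false as a general principle (take $f=1+\Re(z)$: harmonic and positive near $0$, but $\log f$ is strictly superharmonic there), so no elementary pointwise manipulation of this kind can suffice.

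What is actually needed — and what the paper does — is to exploit the \emph{extremal/multiplicative} structure of the Bergman-type quantity: replacing the weight $\varphi$ by $\varphi+\Re\big(\sum_i\alpha_i t_i\big)$ preserves hypothesis (3) and rescales $B_t\langle a,a\rangle$ by $e^{\Re(\sum\alpha_i t_i)}$. Applying the non-logarithmic inequality \eqref{sh12} to this one-parameter family and then optimizing the choice of $\alpha_i$ at the point $t_0$ (namely $\alpha_i=-\tfrac{2}{B(t_0)}\tfrac{\partial B}{\partial t_i}(t_0)$, in normal coordinates for $\omega$ at $t_0$) exactly cancels the offending gradient term and yields \eqref{sh13}. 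This ``tilt'' trick — which is the standard device for upgrading an inequality $\Delta u\ge cu$ with scaling invariance to $\Delta\log u\ge c$, cf.~Lelong–Gruman, Lemma 3.46 — is the missing idea in your proposal; without the freedom to tilt the weight, the gradient correction cannot be absorbed.
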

\begin{proof}
The argument is well-know in the case of positive psh functions, cf.
e.g. \cite{LG}, Lemma 3.46. Strictly the same proof goes through in our situation,
as we will see in what follows.

\noindent The main observation is that if we replace the function $\varphi$ with
\begin{equation}\label{sh013}
\varphi_\lambda:= \varphi+ \Re\left(\sum_{i=1}^n\alpha_it_i\right) 
\end{equation}
then the hypothesis (3) above is still verified. The corresponding fiberwise Bergman
kernel becomes
\begin{equation}\label{sh014}
e^{\Re\left(\sum_{i=1}^n\alpha_it_i\right)}B_t\langle a, a\rangle.
\end{equation}
Now let $t_0\in U$ be an arbitrary point. We assume that the coordinates $(t_i)$ are such that $\omega_{t_0}$ is the flat metric. Since the inequality
\eqref{sh12} is true for the Bergman kernel \eqref{sh014}, we infer that we have
\begin{equation}\label{sh015}
\sum_i \frac{\partial ^2B}{\partial t_i\partial\ol t_i}+ \frac{1}{2}\left(\alpha_i\frac{\partial B}{\partial \ol t_i}+ \ol\alpha_i\frac{\partial B}{\partial t_i}\right)+ \frac{1}{4}|\alpha_i|^2B\geq n\ep_0B
\end{equation}
where we denote by $B:= B_t\langle a, a\rangle$, and the quantities above are evaluated at $t= t_0$. The inequality \eqref{sh015} holds true for any choice of the coefficients $\alpha_i$. We then choose
\begin{equation}\label{sh016}
\alpha_i:= -\frac{2}{B(t_0)}\frac{\partial B}{\partial t_i}(t_0)
\end{equation}
and the lemma follows.
\end{proof}
\medskip

\noindent The inequality \eqref{sh13} will be established by a direct
computation detailed along the next lines. The case $n=1$ was treated by \cite{Ber06},
and our arguments below represent a generalization of his approach.
\smallskip

\noindent By the reproducing property of Bergman kernels, we have
\begin{equation}\label{sh15}
  B_t\langle a, a\rangle= \sum_{i, j} a_i(t)\ol{a_j(t)}\int_V B_t(\xi, s_i(t))
  \ol{B_t(\xi, s_j(t))}e^{-\varphi(t, \xi)}
\end{equation}
  
\noindent We first take the anti-holomorphic derivative with respect to
$t_\alpha$.

\begin{align}\label{sh14}
\frac{\partial}{\partial \ol t_\alpha } B_t\langle a, a\rangle= & \sum_{i, j} a_i(t)\ol{\frac{\partial a_j(t)}{\partial t_\alpha}}\int_V B_t(\xi, s_i(t))
  \ol{B_t(\xi, s_j(t))}e^{-\varphi(t, \xi)}\\
+ & \sum_{i, j} a_i(t)\ol{a_j(t)}\int_V \frac{\partial B_t}{\partial \ol t_\alpha}(\xi, s_i(t))
    \ol{B_t(\xi, s_j(t))}e^{-\varphi(t, \xi)}\nonumber\\
+ & \sum_{i, j} a_i(t)\ol{a_j(t)}\int_V \frac{\partial B_t}{\partial \ol w_\gamma}(\xi, s_i(t))\ol{\frac{\partial s_i^\gamma}{\partial t_\alpha}}
  \ol{B_t(\xi, s_j(t))}e^{-\varphi(t, \xi)}  \nonumber\\
+ & \sum_{i, j} a_i(t)\ol{a_j(t)}\int_V B_t(\xi, s_i(t))
    \ol{\frac{\partial^\varphi B_t}{\partial t_\alpha}(\xi, s_j(t))}e^{-\varphi(t, \xi)}.\nonumber\\
=: & I_1+ I_2+ I_3+ I_4.\nonumber\\   
\nonumber \end{align}
Given any holomorphic top form $h$ which is $L^2$ on $V$ we have
\begin{equation}\label{sh16}
h(s_j(t))= \int_Vh(\xi)\ol{B_t(\xi, s_j(t))}e^{-\varphi(t, \xi)}.
\end{equation}
We apply the operator $\displaystyle \frac{\partial}{\partial \ol t_\alpha}$
to \eqref{sh16} and it follows that we have
\begin{equation}\label{sh17}
0= \int_V h(\xi)
  \ol{\frac{\partial^\varphi B_t}{\partial t_\alpha}(\xi, s_j(t))}e^{-\varphi(t, \xi)}.
\end{equation}
Therefore the last term $I_4$ of \eqref{sh14} is equal to zero.
\medskip

\noindent We evaluate next the quantity
\begin{equation}\label{sh18}
\sum_{\alpha, \beta}\omega^{\ol \alpha\beta}\frac{\partial^2}{\partial t_\beta\partial \ol t_\alpha } B_t\langle a, a\rangle
\end{equation}
where $\displaystyle \left(\omega^{\ol\alpha \beta}\right)$ be the inverse
of the coefficients of the metric $\omega$ at some point $t_0\in U$.
\smallskip

\noindent The $\displaystyle \frac{\partial}{\partial t_\beta}$ of the term $I_1$ gives
\begin{align}\label{sh19}
\frac{\partial I_1}{\partial t_\beta}= & \sum_{i, j} \frac{\partial a_i(t)}{\partial t_\beta}\ol{\frac{\partial a_j(t)}{\partial t_\alpha}}\int_V B_t(\xi, s_i(t))\ol{B_t(\xi, s_j(t))}e^{-\varphi(t, \xi)}\\
  + & \sum_{i, j} a_i(t)\ol{\frac{\partial a_j(t)}{\partial t_\alpha}}
 \int_V \frac{\partial^\varphi B_t}{\partial t_\beta}(\xi, s_i(t))
 \ol{B_t(\xi, s_j(t))}e^{-\varphi(t, \xi)} \nonumber\\ 
+ & \sum_{i, j} a_i(t)\ol{\frac{\partial a_j(t)}{\partial t_\alpha}}\int_V B_t(\xi, s_i(t))
  \ol{\frac{\partial B_t}{\partial \ol t_\beta}(\xi, s_j(t))}e^{-\varphi(t, \xi)}
\nonumber\\
+ & \sum_{i, j} a_i(t)\ol{\frac{\partial a_j(t)}{\partial t_\alpha}}\int_V B_t(\xi, s_i(t))
  \ol{\frac{\partial B_t}{\partial \ol w_\gamma}(\xi, s_j(t))\ol{\frac{\partial s_j^\gamma}{\partial t_\beta}}}e^{-\varphi(t, \xi)}
\nonumber\\  
\nonumber \end{align}

\noindent We compute similar derivative for the other terms:
\begin{align}\label{sh20}
\frac{\partial I_2}{\partial t_\beta}= & \sum_{i, j} \frac{\partial a_i(t)}{\partial t_\beta}\ol{a_j(t)}\int_V \frac{\partial B_t}{\partial \ol t_\alpha}(\xi, s_i(t))
\ol{B_t(\xi, s_j(t))}e^{-\varphi(t, \xi)}\\
+ & \sum_{i, j} a_i(t)\ol{a_j(t)}\int_V \frac{\partial^\varphi}{\partial t_\beta}\Big(\frac{\partial B_t}{\partial \ol t_\alpha}\Big)(\xi, s_i(t))
    \ol{B_t(\xi, s_j(t))}e^{-\varphi(t, \xi)} \nonumber\\
+ & \sum_{i, j} a_i(t)\ol{a_j(t)}\int_V \frac{\partial B_t}{\partial \ol t_\alpha}(\xi, s_i(t))
    \ol{\frac{\partial B_t}{\partial \ol t_\beta}(\xi, s_j(t))}e^{-\varphi(t, \xi)}
    \nonumber\\
+ & \sum_{i, j} a_i(t)\ol{a_j(t)}\int_V \frac{\partial B_t}{\partial \ol t_\alpha}(\xi, s_i(t))
    \ol{\frac{\partial B_t}{\partial \ol w_\gamma}\ol{\frac{\partial s_j^\gamma}{\partial t_\beta}}(\xi, s_j(t))}e^{-\varphi(t, \xi)}\nonumber\\
\nonumber
\end{align}
and 
\begin{align}\label{sh21}
\frac{\partial I_3}{\partial t_\beta}= & \sum_{i, j} \frac{\partial a_i(t)}{\partial t_\beta}\ol{a_j(t)}\int_V \frac{\partial B_t}{\partial \ol w_\gamma}(\xi, s_i(t))\ol{\frac{\partial s_i^\gamma}{\partial t_\alpha}}
  \ol{B_t(\xi, s_j(t))}e^{-\varphi(t, \xi)}\\
  + & \sum_{i, j} a_i(t)\ol{a_j(t)}\int_V \frac{\partial^\varphi}{\partial t_\beta}
      \Big(\frac{\partial B_t}{\partial \ol w_\gamma}\Big)(\xi, s_i(t))\ol{\frac{\partial s_i^\gamma}{\partial t_\alpha}}
    \ol{B_t(\xi, s_j(t))}e^{-\varphi(t, \xi)}  \nonumber\\
+ & \sum_{i, j} a_i(t)\ol{a_j(t)}\int_V \frac{\partial B_t}{\partial \ol w_\gamma}(\xi, s_i(t))\ol{\frac{\partial s_i^\gamma}{\partial t_\alpha}}
  \ol{\frac{\partial B_t}{\partial \ol t_\beta}(\xi, s_j(t))}e^{-\varphi(t, \xi)}  \nonumber\\
+ & \sum_{i, j} a_i(t)\ol{a_j(t)}\int_V \frac{\partial B_t}{\partial \ol w_\gamma}(\xi, s_i(t))\ol{\frac{\partial s_i^\gamma}{\partial t_\alpha}}
  \ol{\frac{\partial B_t}{\partial \ol w_\mu}\ol{\frac{\partial s_j^\mu}{\partial t_\beta}}(\xi, s_j(t))}e^{-\varphi(t, \xi)}. \nonumber\\
\nonumber                                        
\end{align}
\smallskip

\noindent In order to arrange a bit the terms above we make the following observations
\begin{enumerate}
\smallskip

\item[(i)] The second term in the rhs of \eqref{sh19} equals zero, by the formula
  \eqref{sh17}.
\smallskip

\item[(ii)] We introduce the notation
\begin{equation}\label{sh22}\nonumber
\Xi_{i\ol \alpha}(t, \xi):= \ol{\frac{\partial a_i(t)}{\partial t_\alpha}}+ a_i(t)\left(\frac{\partial B_t}{\partial \ol t_\alpha}(\xi, s_i(t))+ \frac{\partial B_t}{\partial \ol w_\mu}(\xi, s_i(t))\ol{\frac{\partial s_i^\mu}{\partial t_\alpha}}\right).
\end{equation}
Then we have
\begin{align}\label{sh23}
\frac{\partial^2B_t\langle a, a\rangle}{\partial t_\beta\partial \ol t_\alpha } = & \sum_{i, j}\int_V\Xi_{i\ol \alpha}(t, \xi)\ol{\Xi_{i\ol \beta}(t, \xi)}e^{\varphi}\\
+ & \sum_{i, j} a_i(t)\ol{a_j(t)}\int_V \frac{\partial^\varphi}{\partial t_\beta}\Big(\frac{\partial B_t}{\partial \ol t_\alpha}\Big)(\xi, s_i(t))
    \ol{B_t(\xi, s_j(t))}e^{-\varphi(t, \xi)} \nonumber\\
+ & \sum_{i, j} a_i(t)\ol{a_j(t)}\int_V \frac{\partial^\varphi}{\partial t_\beta}
      \Big(\frac{\partial B_t}{\partial \ol w_\gamma}\Big)(\xi, s_i(t))\ol{\frac{\partial s_i^\gamma}{\partial t_\alpha}}
    \ol{B_t(\xi, s_j(t))}e^{-\varphi(t, \xi)}  \nonumber\\
  \nonumber \end{align}
and since the following holds
\begin{equation}\label{sh24}\nonumber
\sum_{i, j, \alpha,\beta}\omega^{\ol\alpha \beta}\int_V\Xi_{i\ol \alpha}(t, \xi)\ol{\Xi_{j\ol \beta}(t, \xi)}e^{\varphi}\geq 0
\end{equation}
for each $t\in U$ we only have to deal with the last two terms of the previous equality \eqref{sh23} in order to obtain a lower bound of \eqref{sh18}.
\smallskip

\item[(iii)] The last two terms of \eqref{sh23} are obtained by applying the operator
  $\displaystyle \frac{\partial^\varphi}{\partial t_\beta}\circ \frac{\partial}{\partial \ol t_\alpha}$ to the function $t\to B_t(\xi, s_i(t))$. The commutation formula reads as
\begin{equation}\label{sh25}\nonumber  
\left[\frac{\partial^\varphi}{\partial t_\beta}, \frac{\partial}{\partial \ol t_\alpha}\right]= \frac{\partial^2\varphi}{\partial t_\beta\partial \ol t_\alpha }
\end{equation}
so all in all we infer that we have
\begin{align}\label{sh26}
\sum\omega^{\ol\alpha\beta}\frac{\partial^2B_t\langle a, a\rangle}{\partial t_\beta\partial \ol t_\alpha }\geq & \sum a_i(t)\ol{a_j(t)}\omega^{\ol\alpha\beta}\int_V\frac{\partial^2\varphi}{\partial t_\beta\partial \ol t_\alpha }B_t(\xi, s_j(t))
  \ol{B_t(\xi, s_j(t))}e^{-\varphi(t, \xi)} \nonumber\\
+ & \sum a_i(t)\ol{a_j(t)}\omega^{\ol\alpha\beta}\int_V \frac{\partial }{\partial \ol t_\alpha}\Big(\frac{\partial^\varphi B_t}{\partial t_\beta}\Big)(\xi, s_i(t))
    \ol{B_t(\xi, s_j(t))}e^{-\varphi(t, \xi)} \nonumber\\
\nonumber
\end{align}
\end{enumerate}  

\noindent By the equation \eqref{sh17} we infer that
\begin{equation}\label{sh27}
\int_V \frac{\partial }{\partial \ol t_\alpha}\Big(\frac{\partial^\varphi B_t}{\partial t_\beta}\Big)(\xi, s_i(t))
    \ol{B_t(\xi, s_j(t))}e^{-\varphi(t, \xi)} = -\int_V \frac{\partial^\varphi B_t}{\partial t_\beta}(\xi, s_i(t))
    \ol{\frac{\partial^\varphi B_t}{\partial t_\alpha}(\xi, s_j(t))}e^{-\varphi(t, \xi)} 
\end{equation}
\medskip  

\noindent 
The computations above were done with respect to an arbitrary coordinate system, so we can as well
assume that $\displaystyle \omega^{\ol\alpha \beta}= \delta_{\alpha\beta}$ at some point
$t_0\in U$.
We therefore get 
\begin{align}\label{sh28}
\sum\frac{\partial^2B_t\langle a, a\rangle}{\partial t_\alpha\partial \ol t_\alpha }(t_0)\geq & \sum a_i(t_0)\ol{a_j(t_0)}\int_V\frac{\partial^2\varphi}{\partial t_\alpha\partial \ol t_\alpha }B_{t_0}(\xi, s_i(t_0))
  \ol{B_{t_0}(\xi, s_j(t_0))}e^{-\varphi(t_0, \xi)} \nonumber\\
- & \sum a_i(t_0)\ol{a_j(t_0)}\int_V \frac{\partial^\varphi B_{t_0}}{\partial t_\beta}(\xi, s_i(t_0))
    \ol{\frac{\partial^\varphi B_{t_0}}{\partial t_\beta}(\xi, s_j(t_0))}e^{-\varphi(t_0, \xi)}
\nonumber\\
= & \sum_\alpha\int_V\left(\frac{\partial^2\varphi}{\partial t_\alpha\partial \ol t_\alpha }(t_0, \xi)|\Gamma(\xi)|^2- |\Lambda_\alpha(\xi)|^2\right)e^{-\varphi(t_0, \xi)},\nonumber \\
\nonumber
\end{align}
where the notations used are as follows
\begin{equation}\label{sh29}
\Gamma(\xi):= \sum_i a_i(t_0)B_{t_0}(\xi, s_i(t_0))
\end{equation}
and   
\begin{equation}\label{sh30}
\Lambda_\alpha(\xi):= \sum_{i} a_i(t_0)\frac{\partial ^\varphi B_{t_0}}{\partial t_\alpha}(\xi, s_i(t_0))
\end{equation}
In order to evaluate the norm of $\Lambda_\alpha$ we compute its $\dbar$, and obtain
\begin{equation}\label{sh31}
\dbar\Lambda_\alpha(\xi)= -\sum_{i, k} a_i(t_0)B_{t_0}(\xi, s_i(t_0))\frac{\partial^2\varphi}{\partial t_\alpha\partial \ol\xi_k}d\ol \xi_k
\end{equation}
and we see that this can be rewritten as
\begin{equation}\label{sh32}
\dbar\Lambda_\alpha(\xi)= -\Gamma(\xi)\sum_{k} \frac{\partial^2\varphi}{\partial t_\alpha\partial \ol\xi_k}d\ol \xi_k.
\end{equation}
Moreover, $\Lambda_\alpha$ is perpendicular on the space of holomorphic $L^2$ functions --by the property \eqref{sh17}--, and H\"ormander estimates show that we much have
\begin{equation}\label{sh33}
\int_V|\Lambda_\alpha(\xi)|^2e^{-\varphi(t_0, \xi)}\leq \int_V|\Gamma|^2\frac{\partial^2\varphi}{\partial t_\alpha\partial \ol\xi_k}\frac{\partial^2\varphi}{\partial \ol t_\alpha\partial \xi_m}\varphi^{\ol k m}e^{-\varphi(t_0, \xi)}.
\end{equation}
We thus get the inequality
\begin{equation}\label{sh34}
\sum_\alpha\frac{\partial^2B_t\langle a, a\rangle}{\partial t_\alpha\partial \ol t_\alpha }(t_0)\geq  \sum_\alpha\int_V|\Gamma|^2\left(\frac{\partial^2\varphi}{\partial t_\alpha\partial \ol t_\alpha }- \frac{\partial^2\varphi}{\partial t_\alpha\partial \ol\xi_k}\frac{\partial^2\varphi}{\partial \ol t_\alpha\partial \xi_m}\varphi^{\ol k m}\right)e^{-\varphi(t_0, \xi)}.
\end{equation}
It turns out that the quantity
\begin{equation}\label{sh35}
  \frac{\partial^2\varphi}{\partial t_\alpha\partial \ol t_\alpha }- \frac{\partial^2\varphi}{\partial t_\alpha\partial \ol\xi_k}\frac{\partial^2\varphi}{\partial \ol t_\alpha\partial \xi_m}\varphi^{\ol k m}
\end{equation}
has an intrinsic interpretation: it is equal to
\begin{equation}\label{sh36}
\frac{n}{d+1}\frac{(\ddc \varphi)^{d+1}\wedge p^\star\omega^{n-1}}{(\ddc \varphi)^{d}\wedge p^\star\omega^{n}}
\end{equation}
evaluated at the point $t_0$. By hypothesis, $\varphi$ is psh, which combined with the lower bound for the trace of $\ddc \varphi$ with respect to $p^\star\omega$
shows that
\begin{equation}\label{sh37}
\frac{n}{d+1}\frac{(\ddc \varphi)^{d+1}\wedge p^\star\omega^{n-1}}{(\ddc \varphi)^{d}\wedge p^\star\omega^{n}}\geq n\ep_0.
\end{equation}
The inequality \eqref{sh37} is easy to justify in our situation, since $\varphi$ is
strictly psh, and we can choose the coordinates in such a way that \eqref{sh37} becomes trivial. It holds however true in more general circumstances, as we will now see: it is enough to assume that $\varphi$ to be strictly psh on the fibers of $p$.

To this end, let $(z_i)$ be local coordinates centered at the origin, such that
\begin{align}\label{sh70}
  \ddc \varphi= & \sqrt{-1}\Big(\sum_{\alpha, \beta} \varphi_{\alpha\ol\beta}dt_\alpha\wedge d\ol t_\beta+ \sum_{\alpha, a} \varphi_{\alpha\ol a}dt_\alpha\wedge d\ol z_a+ \sum_{a, \beta} \varphi_{a\ol\beta}dz_a\wedge d\ol t_\alpha\Big)\\
    + &  \sqrt{-1}\sum_{a} dz_a\wedge d\ol z_a.\nonumber \\
\nonumber
\end{align}
By hypothesis we have
\begin{align}\label{sh71}
  n\ep_0\prod_\alpha \sqrt{-1}dt_\alpha d\ol t_\alpha\leq & \sum_{\alpha} \varphi_{\alpha\ol \alpha}\prod_\alpha \sqrt{-1}dt_\alpha d\ol t_\alpha\nonumber \\
  + & \sqrt{-1}\sum_{\alpha, a} \varphi_{\alpha\ol a}dt_\alpha\wedge d\ol z_a
      \prod \wh{dt_\alpha\wedge d\ol t_\alpha}\nonumber \\
+ & \sqrt{-1}\sum_{a, \alpha} \varphi_{a \ol \alpha}dz_a\wedge d\ol t_\alpha
    \prod \wh{dt_\alpha\wedge d\ol t_\alpha}\nonumber \\
+ &  \sqrt{-1}\Big(\sum_{a} dz_a\wedge d\ol z_a\Big)\prod \wh{dt_\alpha\wedge d\ol t_\alpha},\nonumber \\
\nonumber  
\end{align}
and therefore for any choice of complex numbers $\displaystyle (\lambda_{\alpha \ol a})$ we have
\begin{equation}\label{sh72}
\sum_{\alpha} \varphi_{\alpha\ol \alpha}- n\ep_0- 2\Re(\sum_{\alpha, a} \varphi_{\alpha\ol a}\ol{\lambda_{\alpha \ol a}})+ \sum|\lambda_{\alpha \ol a}|^2\geq 0. 
\end{equation}
This now implies that  
\begin{equation}\label{sh73}
\sum_{\alpha} \varphi_{\alpha\ol \alpha}- \sum_{\alpha, a} |\varphi_{\alpha\ol a}|^2\geq n\ep_0
\end{equation}
which is what we wanted to prove.

\noindent The quantity we have on the LHS of \eqref{sh35} reads as
\begin{equation}\label{sh50}
n\frac{\ddc B_t\langle a, a\rangle\wedge \omega^{n-1}}{\omega^n}
\end{equation}
and we get
\begin{equation}\label{sh51}
\ddc B_t\langle a, a\rangle\wedge \omega^{n-1}\geq \ep_0B_t\langle a, a\rangle\omega^n.
\end{equation}
In conclusion, the local version of our result holds true and Theorem \ref{thm0} is proved. 
\medskip

\subsection{Proof of Theorem \ref{thm01}} We first assume that $\omega$ is K\"ahler. Then Theorem \ref{thm01} is easily reduced to Theorem \ref{thm0} as follows. 

\noindent Let $U\subset X$ be an open subset of
$X$, such that
\begin{equation}\label{sh52}
\omega_X |_U= \ddc \varphi_U
\end{equation}
for some function smooth $\varphi_U$ defined on $U$. We then consider the restriction of  $L$ on the $p$-inverse image of $U$ denoted by $Y_U$ and we endow it with the metric
\begin{equation}\label{sh53}
h_1:= e^{-C\cdot \phi}h_L
\end{equation}
where $\phi:= \varphi_U\circ p$. The hypothesis of Theorem \ref{thm0} are fulfilled. The constant $\ep_0$ is in this case equal to $C$. Then we have
\begin{equation}\label{sh54}
\Theta_{\det h_{1, Y_U/U}}(\det\cE)\wedge \omega_X ^{n-1}\geq rC\omega_X ^n,
\end{equation}
which is equivalent to
\begin{equation}\label{sh55}
\Theta_{\det h_{Y/X}}(\det\cE)\wedge \omega_X ^{n-1}\geq 0 ,
\end{equation}
by using the relation $h_{1, Y_U/U} = h_{Y/X} \cdot e^{-C\cdot \varphi_U}$.
\smallskip

\noindent Now for the case of an arbitrary metric $\omega$ we argue as in the proof of Theorem \ref{thm0}. Let $B(x_i, \ep)$ be a covering of $X$ with balls of radius
$\ep$, such that the condition \eqref{reg1} is satisfied. Then on each ball
we consider the metric
\begin{equation}\label{sh353}
h_{i, L}:= e^{-C(1+ \delta_\ep)|t_i|^2}h_L
\end{equation}
where $(t_i)$ are coordinates on the ball $B(x_i, \ep)$. By the K\"ahler case already
discussed, we obtain
\begin{equation}\label{sh354}
\left(\Theta_{\det h_{Y/X}}(\det\cE)+ rC(1+ \delta_\ep)\omega_i\right)\wedge \omega_i ^{n-1}\geq rC\big(1+ \O(\delta_\ep)\big)\omega_i^n
\end{equation}
on each ball $B(x_i, \ep)$. This inequality is basically unchanged if we replace
$\omega_i$ by the global metric $\omega$, again thanks to \eqref{reg1}.
We therefore obtain
\begin{equation}\label{sh355}
\left(\Theta_{\det h_{Y/X}}(\det\cE)+ rC(1+ \delta_\ep)\omega\right)\wedge \omega ^{n-1}\geq rC\big(1+ \O(\delta_\ep)\big)\omega^n
\end{equation}
Theorem \ref{thm01} is therefore established by letting $\ep\to 0$.

\subsection{K\"ahler version of Lemma \ref{trivia}}
We will establish here the K\"ahler version of Lemma \ref{trivia}. The statement is absolutely the same, 
except that we only assume the manifold $X$ to be compact K\"ahler. Also, the arguments are similar to those used in the projective case but we apply Theorem \ref{thm01} instead of Theorem \ref{push}.
There is however an additional slight complication due
to the singularities of $\cF$, so we will provide a complete treatment in what follows.
\smallskip

\noindent Recall that we have the bundle $E\to \wh X$ which admits a sequence of smooth metrics $\displaystyle h_{E, \ep}$ such that
\begin{equation}\label{sh56}
\Theta(E, h_{E, \ep})\wedge g_\ep^{n-1}= \delta_\ep g_\ep^n\otimes \Id_{\End(E)}
\end{equation}
for some constant $\delta_\ep$ such that $\lim_{\ep\to 0}\delta_\ep= 0$.
Let $h$ be a metric on $\O (1)$ which could be singular, but such that
\begin{equation}\label{sh57}
\Theta(\O (1), h)\geq 0
\end{equation}
in the sense of currents on $\P(E)$.
\smallskip

\noindent Assume by contradiction that the multiplier sheaf  $\cI((k+1 -\delta_0)h |_{\P(E^\star)_x})$ is non-trivial on the generic fiber $\P(E)_x$ 
for some $\delta_0 >0$. Then we consider the relative
adjoint bundle
\begin{equation}
K_{\P(E)/\wh X}+ \O (r+k) .
\end{equation}
By a standard $L^2$-estimate (cf. Lemma \ref{l2est}), the direct image
\begin{equation}\label{sh58}
\cG := p_\star\left((K_{\P(E)/\wh X}+ \O (r+k))\otimes \cI((k+1 -\delta_0)h)\right)
\end{equation}
is non zero. We define the metric 
\begin{equation}\label{sh59}
h_\ep:= h_{E, \ep}^{\otimes(r-1+\delta_0)}\otimes h^{\otimes (k+1-\delta_0)} 
\end{equation}
on the bundle $\O(r+k)$ --here we denote with the same symbol $h_{E, \ep}$ the metric on $E$ and the metric induced by it on $\O (1)$.

\noindent Then a direct computation shows that we have
\begin{equation}\label{sh60}
  \Theta(\O (r+k), h_\ep)\wedge \pi^\star g_\ep^{n-1}\geq (r-1) \delta_\ep
  \pi^\star g_\ep^{n}.
\end{equation}
Here we are using \eqref{sh57} combined with the Hermite-Einstein identity
\eqref{sh56} and the explicit expression of the curvature of $\O (1)$.
\smallskip

\noindent We therefore get a \emph{proper} subsheaf $\cG$ of $\Sym^k E^\star \otimes \det E^\star$ for which we have
\begin{equation}\label{sh61}
\int_{\wh X}c_1(\cG )\wedge (\pi^\star g_\ep)^{n-1}\geq
r (r-1)\delta_\ep\int_{\wh X}(\pi^\star g_\ep)^{n}  
\end{equation}
by Theorem \ref{thm01}. Moreover, $\cG$ is independent of $\ep$. As $\ep\to 0$ we obtain
\begin{equation}\label{sh62}
\int_{\wh X}c_1(\cG)\wedge \pi^\star \omega^{n-1}\geq
0
\end{equation}
since the volume of $\wh X$ with respect to $g_\ep$ is uniformly bounded.
\smallskip

\noindent As in the projective case, the stability hypothesis forbids the existence
of such subsheaf $\cG$. In conclusion, we have
\begin{equation}\label{sh63}
 \cI((k+1-\ep) h |_{\P(E)_x} )= \O_{\P(E)_x} ,
\end{equation}
for any $\ep >0$, where $x$ is a generic point.

\bigskip

\noindent Finally, we prove Theorem \ref{thm1} in the case when $X$ is compact K\"{a}hler. Thanks to the stability condition $(ii)$ of Theorem \ref{thm1}, \eqref{sh63} implies that
$$\cI(h^{\otimes (r+1)}  |_{\P(E)_x} )= \O_{\P(E)_x} .$$
By applying Theorem \ref{push} to 
$$ p_\star \left( (K_{\P (E) /X} + \mathcal{O}(r+1)) \otimes \cI(h^{\otimes (r+1)}) \right) = \cF^\star ,$$
$\cF^\star$ is positively curved. Together with $c_1 (\cF^\star) =0$, Proposition \ref{regsh} implies that $\cF^\star$ is hermitian flat. 
\medskip

\noindent In our previous arguments we have used the following statement, which is
a consequence of $L^2$-estimates. 
\begin{lemme}\label{l2est}
Let $\mathbb{P}^n$ be the $n$-dimensional projective space, and let $m\in\mathbb{N}^\star$. Let $h$ be a singular metric on $\mathcal{O}_{\mathbb{P}^n}(m)$ such that $i\Theta_{h} (\mathcal{O}_{\mathbb{P}^n} (m)) \geq 0$ on $X$.
Then the space $H^0 (\mathbb{P}^n , \mathcal{O}_{\mathbb{P}^n} (m-1)\otimes \mathcal{I} ((1-\ep)h))$ is non zero for every $\ep >0$.
\end{lemme}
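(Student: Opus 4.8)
The plan is to prove this by induction on $n$, the analytic heart being an application of the Ohsawa--Takegoshi--Manivel $L^2$ extension theorem to a generic hyperplane. This is cleaner than trying to estimate the multiplier ideal $\cI((1-\ep)h)$ directly, whose behaviour is delicate at the higher codimension strata of its non-klt locus.

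\emph{Base case $n=1$.} The curvature current $i\Theta_h\big(\cO_{\P^1}(m)\big)$ is a closed positive $(1,1)$--current cohomologous to $m$ times the Fubini--Study class, hence of total mass $m$, so the sum of its Lelong numbers is at most $m$. Writing $\varphi_h$ for the local weights, it follows that $\cI\big((1-\ep)\varphi_h\big)=\cO_{\P^1}(-D)$ with $D=\sum_p\lfloor(1-\ep)\nu(\varphi_h,p)\rfloor[p]$ and $\deg D\le\lfloor(1-\ep)m\rfloor\le m-1$; thus $\cO_{\P^1}(m-1)\otimes\cI((1-\ep)h)$ contains $\cO_{\P^1}(m-1-\deg D)$, which is effective.

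\emph{Inductive step, $n\ge 2$.} First I would choose a generic hyperplane $H\cong\P^{n-1}$; since $\varphi_h\not\equiv-\infty$, for generic $H$ the restriction $\varphi_h|_H$ is a genuine psh weight, so $h|_H$ is a singular metric on $\cO_{\P^{n-1}}(m)$ with $i\Theta_{h|_H}\ge 0$, and the inductive hypothesis furnishes a section $0\ne\sigma_H\in H^0\big(H,\cO_{\P^{n-1}}(m-1)\otimes\cI((1-\ep)(h|_H))\big)$. Next I would prepare the extension data: let $A:=\cO_{\P^n}(1)$ with its Fubini--Study metric $h_{FS}$, rescaled so that the section $w$ defining $H$ satisfies $|w|^2_{h_{FS}}\le 1$, and set $L:=\cO_{\P^n}(n+m-1)$, endowed with the singular metric $h_L$ whose curvature current equals $(1-\ep)\,i\Theta_h(\cO(m))+(n-1+\ep m)\,\om_{FS}$ (such $h_L$ exists since $(1-\ep)m+(n-1+\ep m)=n+m-1$, and $\cI(h_L)=\cI((1-\ep)h)$ because the added factor is smooth). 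This curvature is $\ge 0$ and even $\ge\om_{FS}=i\Theta_{h_{FS}}(A)$ since $n-1+\ep m\ge 1$. By the adjunction identity $K_{\P^n}+L+A=\cO_{\P^n}(m-1)$, whose restriction to $H$ is $\cO_{\P^{n-1}}(m-1)$, the section $\sigma_H$ lies in $H^0\big(H,(K_{\P^n}+L+A)|_H\otimes\cI(h_L|_H)\big)$. Applying the Ohsawa--Takegoshi--Manivel extension theorem on the projective manifold $\P^n$ then produces $\sigma\in H^0\big(\P^n,(K_{\P^n}+L+A)\otimes\cI(h_L)\big)=H^0\big(\P^n,\cO_{\P^n}(m-1)\otimes\cI((1-\ep)h)\big)$ restricting to $\sigma_H$ on $H$; in particular $\sigma\ne 0$, which closes the induction.

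\emph{Expected difficulty.} The estimates involved are routine once the framework is set. The two points deserving care are: verifying that for generic $H$ the restriction $h|_H$ is a well-defined pseudo-effective singular metric, so that the inductive hypothesis genuinely applies; and arranging the precise hypotheses of the extension theorem, which is exactly why one must absorb the auxiliary smooth factor of curvature $(n-1+\ep m)\om_{FS}$ into $h_L$, forcing $i\Theta_{h_L}(L)\ge i\Theta_{h_{FS}}(A)$.
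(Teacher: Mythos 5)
Your proposal is correct, but it follows a genuinely different route from the paper. The paper's proof is a single application of Nadel-type $L^2$ estimates: pick a point $x$ outside $V(\cI((1-\ep)h))$, endow $\cO(m+n)$ with the product metric $h_\varphi\cdot h^{1-\ep}\cdot h_{FS}^{\ep}$ where $\varphi=n\log(\sum_i|s_i|^2)$ has an isolated log pole of Lelong number $n$ at $x$, check that the resulting curvature is $\ge\ep\,\om_{FS}>0$, and conclude surjectivity of $H^0(\P^n,\cO(m-1)\otimes\cI((1-\ep)h))\to(\cO(m-1))_x$. Your proof instead runs an induction on $n$, with a one-dimensional base case via Siu decomposition plus a degree count, and an inductive step via Ohsawa--Takegoshi--Manivel extension from a generic hyperplane. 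Both are valid; the paper's argument is shorter and avoids the genericity bookkeeping for the restriction $h|_H$ and the absorption of the auxiliary smooth curvature $(n-1+\ep m)\om_{FS}$, while yours makes the mechanism of the $(1-\ep)$ loss fully explicit in the curve case.

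One small imprecision in your base case: the formula $\cI((1-\ep)\varphi_h)=\cO_{\P^1}(-D)$ with $D_p=\lfloor(1-\ep)\nu(\varphi_h,p)\rfloor$ is not an equality for arbitrary psh weights on a Riemann surface (the multiplier ideal there is not determined by Lelong numbers alone; e.g.\ weights differing from a pure log pole by terms of Lelong number zero can enlarge $\cI$). What you actually need, and what is true, is the inequality $D_p\le(1-\ep)\nu(\varphi_h,p)$: if $z^{D_p-1}\notin\cI((1-\ep)\varphi_h)_p$, then $e^{-((1-\ep)\varphi_h-(D_p-1)\log|z|^2)}$ is non-integrable, and Skoda's lemma in dimension one forces $(1-\ep)\nu_p-(D_p-1)\ge1$. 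Summing and using $\sum_p\nu_p\le m$ gives $\deg D\le(1-\ep)m<m$, hence $\deg D\le m-1$ since it is an integer. With that replacement the base case is sound, and the inductive step reads correctly as written.
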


\begin{proof}
Set $L:= \mathcal{O}_{\mathbb{P}^n} (m+n)$. Then we have
$$\mathcal{O}_{\mathbb{P}^n} (m-1) = K_{\mathbb{P}^n} +L .$$
Let $x\in \mathbb{P}^n \setminus V (\mathcal{I} ((1-\ep)h))$ and let $\{s_1, s_2,
\dots, s_n\}$ be a basis of $H^0 (\mathbb{P}^n, \O (1))$ vanishing on $x$.
The local weight 
$$\varphi := n \log (\sum_i |s_i|^2)$$
defines a metric on $\O (n)$ of isolated singularity at $x$ with Lelong number $n$.
We define the metric
$$h_L := h_\varphi + (1-\ep)h + \ep h_{FS}$$
on $L$, where $h_{FS}$ is the Fubini-Study metric. Then the $L^2$-estimates (cf. \cite[Cor 5.12]{Dem}) implies that
$$H^0 (\mathbb{P}^n , \mathcal{O}_{\mathbb{P}^n} (K_{\mathbb{P}^n} +L)\otimes \mathcal{I} ((1-\ep)h)) \rightarrow (\mathcal{O}_{\mathbb{P}^n} (K_{\mathbb{P}^n} +L))_x$$
is surjective. The lemma is proved.
\end{proof}

\section{A few comments about the general conjecture and other results}

\noindent We recall that the conjecture of Pereira-Touzet cf. \cite{PeTou} states that if $\cF\subset T_X$ is a holomorphic foliation such that $c_1(\cF)= 0$, $c_2(\cF)\neq 0$ and $\cF$ is $\omega_X$-stable and $K_X$ is pseudo-effective, then $\cF$ is algebraic. 
\smallskip
    
\noindent As we have already mentioned, under the conditions above $\cF$ is a sub-bundle of $T_X$. If we assume (by contradiction) that $\cF$ is not algebraic, then
the bundle $\O(1)\to \P(\cF)$ is pseudo-effective. We can therefore
find a singular metric $e^{-\varphi}$ on it such that
the curvature $\Theta =\ddc\varphi \geq 0$ in the sense currents. The Siu decomposition of this current writes as
\begin{equation}\label{eqn1}
  \Theta = \sum a_i [E_i] + T,
\end{equation}
where $ \sum a_i [E_i] $ is the divisorial part and $T$ is a positive current with the property that $\codim_{X} E_c (T) \geq 0$ for every $c>0$. Here $E_c (T)$ is the locus where the Lelong number of $T$ is at least $c$.

\noindent We show now the following result concerning the divisorial part of the current $\Theta$.

\begin{thm}\label{h_div}
Let $X$ be a smooth projective manifold, such that $K_X$ is pseudo-effective. We assume that $\cF$ is a holomorphic foliation on $X$ such that $c_1(F)= 0$. Assume moreover that $\cF$ is $H$--stable, where $H$ is an ample divisor
on $X$. If $\O(1)$ is pseudo-effective and let $E$ be one component of the divisorial part of \eqref{eqn1},  then the map
\begin{equation}\label{eqn-1}
p_E:E\to X
\end{equation}
induced by the restriction of $p$ to $E$ is a locally trivial fibration. 
\end{thm}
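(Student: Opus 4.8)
The plan is to prove that, for \emph{every} $x\in X$, the scheme–theoretic fibre $E_x:=E\cap p^{-1}(x)$ is a \emph{smooth} hypersurface of $p^{-1}(x)$; everything else is formal. First recall that by Pereira--Touzet the hypotheses force $\cF$ to be a subbundle of $T_X$, so $p:\P(\cF)\to X$ is a smooth, Zariski--locally trivial $\P^{r-1}$--bundle; write $F_x:=p^{-1}(x)\cong\P^{r-1}$. Since $\P(\cF)$ is smooth, $E$ is a Cartier divisor, hence each $E_x$ is a Cartier divisor of $F_x$, in particular pure of dimension $r-2$, and $p_E:E\to X$ is flat with connected fibres of constant degree $d$ with respect to $\cO(1)$. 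Now suppose we know that $E_x$ is smooth for every $x$. In a local trivialisation $p^{-1}(U)\cong U\times\P^{r-1}$ write $E\cap(U\times\P^{r-1})=\{f=0\}$, with $f$ a section of $\mathrm{pr}_2^\star\cO(d)$ (twisted by a line bundle on $U$, trivial after shrinking $U$); smoothness of $E_x$ means $f(x,\cdot)$ and its fibre--differential have no common zero, and a one--line Jacobian computation then shows that $E$ itself is smooth and that $dp_E$ is everywhere surjective. Thus $p_E$ is a proper holomorphic submersion, hence a locally trivial fibration (Ehresmann); and when $d=1$ it is even the sub-bundle $\P(\cK)\to X$ of a rank $(r-1)$ locally free $\cK\subset\cF$ (a degree $1$ hypersurface of $\P^{r-1}$ is a hyperplane, automatically smooth, so in that case the smoothness hypothesis is free). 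So it is enough to prove that (a) $E$ dominates $X$ and (b) $E_x$ is smooth for every $x$.

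For (a) and for the generic part of (b) the key input is the Kähler version of Lemma \ref{trivia} applied with $k=1$, which is legitimate because $\widehat{\Sym}^1\cF^\star=\cF^\star$ is $H$--stable: hence $\cI(e^{-(2-\ep)\vp}|_{F_x})=\cO_{F_x}$ for every $\ep>0$ and every $x$ outside a countable union of proper subvarieties. Restricting the class $c_1(\cO(1))$ to $F_x\cong\P^{r-1}$, where $\mathrm{Pic}=\Z$, gives $1=\sum_i a_i\deg(E_i\cap F_x)+\deg(T|_{F_x})$ with all summands $\ge 0$; moreover the log canonical threshold of $\vp|_{F_x}$ being $\ge 2$ bounds all the $a_i$ and all Lelong numbers of $T|_{F_x}$ by $1/2$ and forces each $E_i\cap F_x$ to be reduced. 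A vertical component $E_i=p^{-1}(D_i)$ is ruled out by restricting to $\P(\cF|_C)$ for a general complete intersection curve $C$: by Mehta--Ramanathan $\cF|_C$ is stable of slope $0$, hence unitary flat by Narasimhan--Seshadri, so $\cO(1)|_{\P(\cF|_C)}$ is nef with vanishing top self-intersection $s_1(\cF|_C)=0$, and the relation $\cO(1)=a_i\,p_C^\star[D_i\cap C]+(\text{psef})$, intersected with nef powers of $\cO(1)$, contradicts the strict positivity of $\cO(1)$ along the fibres of $\P(\cF|_C)\to C$. Thus every $E_i$, in particular $E$, dominates $X$; and generic smoothness of the flat family $p_E$ shows that $E_x$ is smooth for $x$ in a dense Zariski open $X^\circ\subset X$.

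The remaining step — propagating smoothness of $E_x$ to the points $x_0\in X\setminus X^\circ$ — is, I expect, the real obstacle, since numerical positivity alone does not a priori control the scheme structure of a special fibre. The strategy I would follow is once more restriction to general complete intersection curves $C$ through $x_0$: by Mehta--Ramanathan and Narasimhan--Seshadri, $\cO(1)|_{\P(\cF|_C)}$ is a nef class on the boundary of the nef cone (its top self-intersection being $s_1(\cF|_C)=0$) carrying a smooth semipositive metric; comparing it with $e^{-\vp}|_{\P(\cF|_C)}$ and using nefness forces $\cO(1)^{r-1}\cdot E_C=0$ for $E_C:=E\cap\P(\cF|_C)$, and since $\cO(1)$ restricts \emph{amply} to the $(r-2)$--dimensional fibres of $E_C\to C$ this numerical degeneracy should rigidify the family $E_C\to C$, so that in particular $E_{x_0}=E_C\cap F_{x_0}$ is smooth. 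An alternative, closer to the philosophy of the paper, is to feed a large twist $L:=\cO(N)\otimes\cO(-E)$ (with $N\ge 1/a_E$, so that $\Theta_{h_L}(L)\ge 0$ by the Siu bound $\Theta\ge a_E[E]$) into Theorems \ref{push} and \ref{thm01}: this pushes the numerical data of $E$ — and of the line bundle $M$ with $\cO(E)=\cO(d)\otimes p^\star M$ — onto the boundary of positivity, after which the $H$--stability of $\cF$ should close the argument and, in the optimal case, even yield $d=1$. Making either route rigorous, i.e. passing from the numerical vanishing to scheme-theoretic smoothness of every fibre, is the step I expect to require the most care; once (b) is secured, the theorem follows from the reduction explained in the first paragraph.
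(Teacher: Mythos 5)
Your proposal takes a genuinely different route from the paper, and it does not reach the finish line: you explicitly concede in the last paragraph that ``passing from the numerical vanishing to scheme-theoretic smoothness of every fibre, is the step I expect to require the most care,'' and neither of the two strategies you sketch for it is carried out. Phrases such as ``this numerical degeneracy should rigidify the family'' are hopes, not arguments, so what you have is a genuine gap in the key step.

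More structurally, your reduction is aimed at the wrong target. You reduce to showing that $E_x$ is a smooth hypersurface of $F_x\cong\P^{r-1}$ for \emph{every} $x$, and then invoke Ehresmann. Even granting (b), this only yields a $C^\infty$ fibre bundle when the fibres have positive dimension, whereas the flat structure in the paper gives holomorphic local triviality; and more importantly, the paper's proof never goes through smoothness of special fibres at all. Their argument is: restrict to a generic complete intersection curve $S=H_1\cap\cdots\cap H_{n-1}$; since $\cF|_S$ is stable of slope $0$ it carries a flat Hermite--Einstein metric $h$, giving a smooth semipositive metric on $\cO(1)|_{\P(\cF|_S)}$. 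Using the fact (established earlier in their proof) that $c_1(E)=c_1(\cO(m))$, one builds the singular metric $h_a=h^{a-m}\cdot e^{-\log|s_E|^2}$ on $\cO(a)|_S$ for $a\gg0$, whose curvature is semipositive, and then applies positivity of direct images to
$p_\star\bigl((K_{p^{-1}(S)/S}+\cO(a))\otimes\cI(h_a)\bigr)\subset \Sym^{a-2}\cF^\star|_S$.
The right-hand side is Hermitian flat, the left-hand side is Griffiths semipositive, so the left-hand side is Hermitian flat with compatible flat connection; its germs are the degree-$(a-r)$ polynomials vanishing on $E_s$, and for $a$ large their common zero locus is $E_s$, so $E|_S$ is invariant under parallel transport. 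This gives local triviality over $S$ directly, with no need to control the singularities of $E_x$. By contrast, what you would need to supply — a uniform Lelong/multiplier-ideal argument that forces the zero locus of $f(x_0,\cdot)$ to be reduced \emph{and} nonsingular at every point of a special fibre — is genuinely harder than the paper's route and is not present in your write-up.

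Two secondary remarks. First, you rule out vertical components by restricting $\cO(1)$ to $\P(\cF|_C)$, using Mehta--Ramanathan and Narasimhan--Seshadri; the paper does this more cheaply, directly from $L_j\cdot H^{n-1}\ge 0$, the numerical identity $\sum\nu_j L_j+L_T\equiv0$, and the observation that a vertical component would force a nonzero effective $L_j$ with $L_j\cdot H^{n-1}>0$. Second, your appeal to Lemma~\ref{trivia} with $k=1$ is used only to get a Lelong-number bound of $1/2$ along divisors of $F_x$; the paper's proof of Theorem~\ref{h_div} does not invoke Lemma~\ref{trivia} at this step at all.
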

\begin{proof}
\noindent We first remark that for each $j$ we have
\begin{equation}\label{eqn2}
E_j\equiv m_j\O(1)+ p^\star(L_j)
\end{equation}
where $m_j$ is a positive integer, $L_j$ is a line bundle on $X$ and $p:\P(T_\cF)\to X$ is the projection map. We equally have
\begin{equation}\label{eqn3}
T\equiv m_T\O(1)+ p^\star(L_T)
\end{equation}
where $m_T\geq 0$ is a real number and $L_T$ is a $\R$-bundle.

The hypothesis $c_1(\cF)= 0$ together with the stability condition
shows that for each $j$ we have 
\begin{equation}\label{eqn4}
L_j\cdot H^{n-1}\geq 0
\end{equation}  
as soon as $m_j\geq 1$, i.e. if $E_j$ is horizontal with respect to $p$. Now if
$E_j$ projects into a proper subvariety of $X$, then we also have
$\displaystyle L_j\cdot H^{n-1}\geq 0$ since it means that the corresponding $m_j$ equals zero. Moreover, $T$ is a closed positive current, so a quick approximation argument shows that $\displaystyle L_T\cdot H^{n-1}\geq 0$ as well.
On the other hand, the equality \eqref{eqn1} shows that numerically we have
\begin{equation}\label{eqn5}
\sum \nu_j L_j+ L_T\equiv 0 .
\end{equation}
Hence we infer that $L_j \cdot H^{n-1} = L_T\cdot H^{n-1} =0$ for every $j$.
Therefore, we obtain that $m_j \geq 1$ for every $j$, namely, every component $E_j$ is horizontal with respect to $p$. 

\medskip

\noindent Consider the exact sequence
\begin{equation}\label{eq99}
0\to L_j^{-1}\to \Sym^{m_j}\cF^\star\to Q_j\to 0
\end{equation}
induced by the section $E_j$. By taking the determinants and using the previous considerations we obtain $\det(Q_j)\wedge \omega^{n-1}= 0$. 
On the other hand, the foliation $\cF$ is smooth so it follows that
$\det Q_j$ is pseudo-effective, cf. \cite{CP19}. In conclusion, $c_1(Q_j)= 0$,
which in turn implies that $c_1(L_j)= 0$.
\smallskip



\noindent  
After these preliminary considerations, we are ready to prove that $p_E$ is locally trivial. Let
$$S:=H_1 \cap H_2 \cap \cdots \cap H_{n-1}$$ 
be the complete intersection of some smooth hypersurfaces $H_i \in |k H|$ for $k$ large enough. To prove that $p_E$ is locally trivial, it is sufficient  to prove that 
$$p_E |_{p_E ^{-1}(S)}: p^{-1}(S) \cap E \rightarrow S$$ 
is locally trivial.  

Thanks to \cite[Thm 5.2]{Lan04}, $\cF |_S$ is stable if $k$ is large enough (note that we don't need ask that $H_i$ is a generic hypersurface here). Since $c_1 (\cF |_S)=0$, it follows that $\O _{\P (\cF)}(1) |_S$ admits a smooth metric $h$ of semi-positive curvature.  As we proved that  $L_j \cdot H^{n-1}=0$ for every $j$,  we get
$$c_1 (E) = c_1 (\O (m)) \in H^{1,1} (\P (\cF), \R) $$
for some $m\in \mathbb{N}$.
Let $a\geq m$ be some number large enough.  $\O (a)  |_S$ be can equipped with the metric
$$h_{a} := h^{a-m} \cdot e^{-\log |s_{E}|^2}, $$
whose curvature is semipositive. 
We consider the direct image of $p: p^{-1}(S) \rightarrow S$ 
$$p_\star ( (K_{p^{-1}(S)/S} + \O_{\P (\cF)}(a) )\otimes \mathcal{I} (h_a)) \subset p_\star (K_{p^{-1}(S)/S} + \O _{\P (\cF)}(a) ) \qquad\text{on } S .$$
Then RHS is just $\Sym^{a-2} \cF^\star |_S$, which is hermitian flat. The LHS is Griffiths-semi-positive by \cite{PT}.
Therefore LHS is also hermitian flat, and the flat connection is compatible with the flat connection on $\cF^\star |_S$.
Note also that the germs of $p_\star (K_{p^{-1}(S)/S} + \O _{\P (\cF)}(a) \otimes \mathcal{I} (h_a) )_s$ is just the degree $(a-r)$-polynomials vanishing on $ E$.
For $a$ large enough, the flatness $p_\star (K_{p^{-1}(S)/S} + \O _{\P (\cF)}(a) \otimes \mathcal{I} (h_a) )$ implies that the common zero locus of these polynomials (which is $E$) is also 
invariant by the flat connection on $\cF |_S$. It means that $E |_S$ is invariant by the parallel tranport of the flat connection on $\cF |_S$. 
Then $p_E ^{-1}(S)  \rightarrow S$ is locally trivial.  As $S$ is the complete intersection of any smooth hypersurfaces in $|k H|$, $p_E$ is thus locally trivial.
\end{proof}  
\medskip

\noindent If the rank of $\cF$ is equal to two we obtain the following result, which is already known cf. \cite{Dr17}, but our arguments here are different.

\begin{cor} Let $X$ be a projective manifold such that $K_X$ is pseudo-effective and let $\cF\subset T_X$ be a holomorphic subsheaf of rank $2$ such that the following hold.
\begin{enumerate}

\item[(i)] The first Chern class of $\cF$ is zero, i.e. $c_1(\cF)= 0$.
  \smallskip

\item[(ii)] The sheaf $\cF^\star$ is $\omega_X$-strongly stable, meaning that
for any finite \'{e}tale cover $\pi: X' \rightarrow X$, $\pi^\star\cF^\star$ is $\pi^\star\omega_X$-stable.
\end{enumerate}  
Then the bundle $\O(1)$ on $\P(\cF)$ is not pseudo-effective, or $\cF$ is Hermitian flat.
\end{cor}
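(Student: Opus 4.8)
The plan is to deduce the Corollary from Theorem \ref{thm1proj}. Since $X$ is projective, the conclusion of \ref{thm1proj} is word for word the conclusion we want, and the only discrepancy between the hypotheses is that here we assume $\cF^\star$ itself to be strongly stable, rather than $\wh{\Sym}^k\cF^\star$ stable for some $k\ge r=2$. So the whole content is the following implication, after which Theorem \ref{thm1proj} applies with $k=r=2$: \emph{if $\cF$ is a rank two reflexive sheaf on $(X,\om_X)$ with $c_1(\cF)=0$ whose dual $\cF^\star$ is $\om_X$-strongly stable, then $\wh{\Sym}^2\cF^\star$ is $\om_X$-stable}. (As usual we replace $\cF$ by its saturation in $T_X$, which is reflexive and changes neither $\cF^\star$ nor $c_1(\cF)$; the bundle $\cO(1)$ on ``$\P(\cF)$'' is understood as $\cO(1)$ on $\P(E)$ in the sense of \S1, $E=\pi^\star\cF$ modulo torsion. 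The hypothesis that $K_X$ is pseudo-effective is not used, and is listed in the statement only for comparison with \cite{Dr17}.)

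To prove the implication I would first pass to the rank three reflexive sheaf $\cG:=\wh{\End}_0(\cF^\star)$ of trace-free endomorphisms. In characteristic zero there is a canonical isomorphism $\wh{\Sym}^2\cF^\star\cong\cG\otimes\det\cF^\star$ (over the locally free locus one has $\Sym^2 V\cong\End_0(V)\otimes\det V$ for a rank two $V$, because $V\otimes V=\Sym^2 V\oplus\det V$ and $V^\star\cong V\otimes(\det V)^{-1}$; then take reflexive hulls), and since twisting by a line bundle preserves $\om_X$-stability it suffices to show that $\cG$ is $\om_X$-stable. Now $\cG$ is $\om_X$-polystable of slope zero: by Bando--Siu, $\cF^\star$ carries an admissible Hermite--Einstein metric, which, because $c_1(\cF^\star)=0$ makes the Einstein factor vanish, induces an admissible Hermite--Einstein metric on $\cG$. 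Hence, if $\cG$ were \emph{not} stable, being polystable of rank three it would admit a rank one direct summand, i.e.\ a line bundle $M^\star\hookrightarrow\cG\subset\wh{\End}(\cF^\star)$ with $c_1(M)\cdot\om_X^{n-1}=0$; equivalently, a nonzero trace-free sheaf morphism $\phi:\cF^\star\to\cF^\star\otimes M$.

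The heart of the argument is then a Cayley--Hamilton dichotomy for $\phi$. As $\cF^\star$ has rank two and $\phi$ is trace-free, $\phi^2=-(\det\phi)\cdot\Id$ with $\det\phi\in H^0(X,M^{\otimes 2})$. If $\det\phi\equiv 0$, then $\phi$ is nilpotent and nonzero, so $\ker\phi$ is a proper nonzero rank one subsheaf of $\cF^\star$; from $\phi^2=0$ one gets $\mathrm{im}\,\phi\subseteq(\ker\phi)\otimes M$ with $\deg_{\om_X}M=0$, which forces $\deg_{\om_X}\ker\phi\ge 0$ and contradicts the $\om_X$-stability of $\cF^\star$. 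If $\det\phi\not\equiv 0$, then $M^{\otimes 2}$ carries a nonzero section, so $M^{\otimes 2}=\cO_X(Z)$ for an effective divisor $Z$ with $Z\cdot\om_X^{n-1}=0$; as $\om_X$ is a K\"ahler class on the projective manifold $X$ this forces $Z=0$, hence $M^{\otimes 2}\cong\cO_X$ and $\det\phi$ is a nonzero constant $c$. Thus $\phi^2=-c\cdot\Id$ with $c\ne 0$: if $M\cong\cO_X$, then $\phi$ is a genuine trace-free endomorphism of $\cF^\star$ with two distinct eigenvalues $\pm\sqrt{-c}$, so $\cF^\star$ splits as a direct sum of two line bundles, contradicting its stability; if $M$ is a nontrivial two-torsion line bundle, the same computation on the pullback along the associated connected \'etale double cover $\pi:X'\to X$, on which $\pi^\star M\cong\cO_{X'}$, shows that $\pi^\star\cF^\star$ splits into two line bundles and is therefore not $\pi^\star\om_X$-stable --- and \emph{this} is exactly where strong stability, rather than plain stability, is used. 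In every case we reach a contradiction, so $\cG$, hence $\wh{\Sym}^2\cF^\star$, is $\om_X$-stable, and Theorem \ref{thm1proj} applies. I expect the main (and only non-formal) obstacle to be precisely this ``reducible adjoint'' case: one must produce an \'etale cover of $X$ itself --- not of an auxiliary complete intersection curve --- along which $\cF^\star$ becomes unstable, which is what forces the \emph{strong} stability assumption in the statement; the rest is the standard reduction to \ref{thm1proj}.
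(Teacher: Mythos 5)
Your proof is correct and takes a genuinely different route from the paper's. The paper does \emph{not} reduce the corollary to Theorem \ref{thm1proj}; instead it takes the Siu decomposition of the positive current $\Theta$ on $\cO(1)$. If the divisorial part vanishes, the Lelong sets $E_c(T)$ (of codimension $\ge 2$ in $\P(\cF)$, which has relative dimension $1$) miss the generic fiber, so the multiplier ideal on the generic fiber is trivial and the arguments of Section~3 give that $\cF$ is Hermitian flat. If $\cF$ is not flat there is a divisorial component $E$; Theorem \ref{h_div} --- whose proof uses $K_X$ pseudo-effective, the foliation structure of $\cF$, and the restriction to complete-intersection curves with their flat Hermite--Einstein metrics --- shows that $p_E:E\to X$ is locally trivial, hence (the fibers being finite) a connected \'etale cover. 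Then the tautological sub-line-bundle $\cO(-1)|_E\hookrightarrow p_E^\star(\cF^\star)$ has $\int_E c_1(\cO(-1))\wedge p_E^\star\omega_H^{n-1}=0$ because $c_1(\cF)=0$, destabilizing $p_E^\star\cF^\star$ and contradicting strong stability. Your argument instead isolates the purely sheaf-theoretic lemma that, in rank two, strong stability of $\cF^\star$ forces $\omega_X$-stability of $\wh{\Sym}^2\cF^\star$, and then quotes Theorem~\ref{thm1proj} with $k=r=2$. The mechanism --- identify $\wh{\Sym}^2\cF^\star\cong\wh{\End}_0(\cF^\star)\otimes\det\cF^\star$, note that $\wh{\End}_0(\cF^\star)$ is polystable of slope zero by Bando--Siu, and run Cayley--Hamilton on a destabilizing trace-free $\phi:\cF^\star\to\cF^\star\otimes M$ to fall into the nilpotent case, the split case over $X$, or the split case over the \'etale double cover attached to a nontrivial $2$-torsion $M$ --- is sound, and you correctly identify the third branch as the place where strong (rather than plain) stability is essential. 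Comparing the two: your route avoids the pluripotential-theoretic machinery entirely and, as you observe, dispenses with the hypotheses that $K_X$ is pseudo-effective and that $\cF$ is closed under the Lie bracket (neither enters Theorem~\ref{thm1proj}); what Theorem~\ref{h_div} buys the authors in exchange is considerably more structural information (local triviality of every horizontal divisorial component, in any rank), of which the corollary is a low-rank byproduct. Two small caveats worth recording: the passage from the admissible Hermite--Einstein metric on $\cF^\star$ to polystability of $\wh{\End}_0(\cF^\star)$ is a genuine (if standard) input from Bando--Siu on reflexive sheaves, not just a formal tensor computation; and in invoking Theorem~\ref{thm1proj} one is implicitly taking $\omega_X$ to lie in a rational K\"ahler class, the same implicit normalization that the paper's own proof makes when it restricts to curves in $|H|$.
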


\begin{proof}
  \noindent If the bundle $\O(1)$ on $\P(\cF)$ is pseudo-effective, we consider Siu's decomposition \eqref{eqn1}. If there is no divisorial part, namely $\sum a_i [E_i] =0$, then the Lelong number of $T$ vanishes over the generic fiber on the projection $p$ (since the fibers are of dimension $1$). We have already explained that in this case it follows that $\cF$ is hermitian flat.

  \noindent If we assume moreover that $\cF$ is not hermitian flat, we have to have a component $E$ of the divisor-like part of the current $\Theta$. Thanks to Theorem \ref{h_div}, the map
$$p_E: E\to X$$ is an \'etale cover.
  We prove that  $p_E ^\star \cF^\star$ is not stable and this contradicts with our strongly stable condition.
  Indeed, we have the arrows
\begin{equation}\label{eqn33}
0\to \O(-1)|_E\to p_E^\star(\cF^\star)
\end{equation}    
and a quick computation shows that we have
\begin{equation}\label{eqn34}
\int_Ec_1(\O(-1))\wedge p_E^\star(\omega_H^{n-1})= 0,
\end{equation}
because $c_1(\cF)= 0$.  
This is the end of the proof.
\end{proof}

\medskip

\begin{remark}{\rm Actually Claim 4.1 (on page 7) concerns the
    positivity of the curvature form of $\cE$ on the current $\omega_X^{n-1}$. It
    would be nice to extend this as initiated by Berndtsson-Sibony in \cite{BeSib}
    i.e. replace $\omega_X^{n-1}$ with more general currents of bidimension $(1,1)$ on $X$. The following two results are pointing in this direction.
  }
\end{remark}    

\medskip

\begin{thm}\label{bermganprop}
Let $p:Y\to X$ be a holomorphic surjective and proper map, where $X$ and $Y$ are K\"ahler manifolds. 
Moreover, we assume that $p$ is locally projective. Let $L$ be a line bundle over $Y$ with a possible singular metric $h_L$ such that  $i\Theta_{h_L} (L)\geq 0$, and 
\begin{equation}\label{cor030}  
\Theta_{h_L}(L)\wedge p^\star\omega_X^{n-1}\geq \ep_0p^\star\omega_X^{n}
\end{equation}
where $\omega_X$ is a Hermitian metric on $X$, $n=\dim X$, and $\ep_0> 0$ is a positive real number. We assume that the space of fiberwise $\displaystyle L^{\frac{2}{m}}$ sections (with respect to $h_L$) of
$p_\star (m K_{Y/X} +L)$ is non zero. Then there exists a metric $h$ on the
bundle $m K_{Y/X} +L$ such that we have
\begin{equation}\label{fiberversion}
\Theta_{h} (m K_{Y/X} +L) \wedge p^\star \omega_X ^{n-1} \geq \ep_0 \cdot p^\star \omega_X ^n 
\end{equation}
in the sense of currents on $X$.
\end{thm}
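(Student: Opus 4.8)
The plan is to take for $h$ the relative $m$--Bergman kernel metric on $mK_{Y/X}+L$ and then to reuse, almost verbatim, the strategy of the proof of Theorem~\ref{thm0}: reduce to a local model $p:U\times V\to U$ by the regularization procedure of \cite{BP08}, \cite{Ber06} (this is where local projectivity is needed), and then carry out a direct computation on $U$. The only difference with Theorem~\ref{thm0} is that fibrewise Bergman kernels of $(d,0)$--forms get replaced by $L^{2/m}$--extremal functions of relative $m$--canonical forms. Concretely, let $X_0\subset X$ be the Zariski open subset over which $p$ is a submersion and over which the hypothesis on fibrewise $L^{2/m}$ sections holds, and for $y\in p^{-1}(X_0)$, $x:=p(y)$, set
\[
B_m(y):=\sup\Big\{\,|s(y)|^2\ :\ s\in H^0\big(Y_x,(mK_{Y/X}+L)|_{Y_x}\big),\ \int_{Y_x}(s\wedge\overline s)^{1/m}e^{-\varphi_L/m}\le 1\,\Big\},
\]
$|s(y)|^2$ being computed in a fixed local trivialisation. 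By hypothesis $B_m$ is finite and positive on a dense open set and $h:=B_m^{-1}$ is a singular Hermitian metric on $mK_{Y/X}+L$ over $p^{-1}(X_0)$; as for ordinary Bergman kernels $\log B_m$ is plurisubharmonic, so $\Theta_h(mK_{Y/X}+L)\ge0$ as a current over $p^{-1}(X_0)$, and being locally bounded above near $p^{-1}(X\setminus X_0)$ it extends to a closed positive current on $Y$, which defines $h$ globally.

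\textbf{Reduction to a local statement.} We must prove \eqref{fiberversion}. Since its left hand side is already a positive current, it suffices to establish \eqref{fiberversion} over $X_0$, even in the complement of an analytic subset; thus we may assume $p$ smooth and $L$ trivial. Arguing exactly as in \S4 (the discussion around \eqref{reg1} and the construction of $\phi_\delta$ and $\psi_j$), local projectivity of $p$ lets us invoke the regularization of \cite{BP08} to reduce \eqref{fiberversion} to the model situation $p:U\times V\to U$ with $U$ the unit ball in $\CC^n$, $V$ Stein, $L$ trivial with smooth strictly plurisubharmonic weight $\varphi$ satisfying $\ddc\varphi\wedge p^\star\omega^{n-1}\ge\ep_0\,p^\star\omega^n$, where $\omega:=\omega_X|_U$. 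The weight $\varphi_L$ and the base metric $\omega$ are untouched by the order $m$ of the forms, so every reduction step of \S4 survives unchanged; what is new is only the behaviour of the $L^{2/m}$--extremal function under base regularization, which is handled by the same arguments as in \cite{BP08}.

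\textbf{The local computation.} In the model situation it remains, by Lemma~\ref{LGG}, to prove $\ddc B_m\langle a,a\rangle\wedge\omega^{n-1}\ge\ep_0 B_m\langle a,a\rangle\,\omega^n$ on $U$, where $B_m\langle a,a\rangle$ is the $L^{2/m}$--extremal function built fibrewise from evaluations along sections $s_i:U\to V$ as in \eqref{sh11} (for a single point this is just the weight of $h$). This is the $L^{2/m}$--analogue of \eqref{sh14}--\eqref{sh51}. Differentiating the reproducing/extremality identity in the base variables, and using the $L^{2/m}$ Hörmander estimate in place of the one used in \eqref{sh33}, one obtains for the traced Hessian a lower bound of the form $\sum_\alpha\int_V|\Gamma|^2\big(\varphi_{t_\alpha\overline t_\alpha}-\varphi_{t_\alpha\overline\xi_k}\varphi_{\overline t_\alpha\xi_m}\varphi^{\overline k m}\big)e^{-\varphi}$ exactly as in \eqref{sh34}; the bracket is $\ge n\ep_0$ by \eqref{sh37}, which only uses that $\varphi$ is plurisubharmonic with $\ddc\varphi\wedge p^\star\omega^{n-1}\ge\ep_0 p^\star\omega^n$ and is therefore insensitive to $m$. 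This gives \eqref{fiberversion} in the model case, and undoing the reductions above, together with \eqref{reg1}, yields it in general.

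\textbf{Main obstacle.} The delicate point is the last step: the norm $\|\cdot\|_{L^{2/m}}$ is not Hilbertian for $m\ge 2$, so the variation formula and reproducing identity that make the $m=1$ computation of \S4 work have to be re-derived in the $L^{2/m}$ setting, following \cite{BP08}; one must also be careful, in the construction of $h$, with the finiteness of $B_m$ and the extension of $h$ across the non-submersion locus of $p$. (A cyclic covering $Y'\to Y$ branched along the divisor of a fibrewise $L^{2/m}$ section would in principle reduce everything to the case $m=1$ already treated for Theorem~\ref{thm0}, but the ramification divisor makes the relevant twisting line bundle on $Y'$ non-semipositive, so the direct $L^{2/m}$ computation above seems to be the honest route.) Granting the $L^{2/m}$ formalism of \cite{BP08}, everything else is parallel to the proof of Theorem~\ref{thm0}.
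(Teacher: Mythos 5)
Your treatment of the $m=1$ case agrees with the paper: it is deduced from the computation behind Theorem~\ref{thm01} (or Theorem~\ref{thm0}), applied to a single section, which yields the positivity of the relative Bergman kernel metric on $K_{Y/X}+L$ paired with $\omega_X^{n-1}$.

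For $m\geq 2$, however, there is a genuine gap, and you have in fact put your finger on it yourself. Your plan is to run the whole local computation of \S 4 with $L^{2/m}$--extremal functions in place of the Hilbert space Bergman kernel, ``re-deriving'' the reproducing identity, the orthogonality relation \eqref{sh17}, and the H\"ormander estimate \eqref{sh33} in the $L^{2/m}$ setting. But the entire chain \eqref{sh15}--\eqref{sh34} rests on the fact that $B_t$ is the reproducing kernel of a Hilbert space: \eqref{sh16} is the reproducing property, \eqref{sh17} is orthogonality to holomorphic sections, and \eqref{sh33} is the $L^2$ minimal-solution estimate for $\bar\partial$. None of these has a ready-made analogue for the $L^{2/m}$ norm, which is not Hilbertian for $m\geq 2$ -- there is no reproducing kernel, no orthogonal decomposition, and the $\bar\partial$-minimization step does not produce the same clean inequality. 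You flag this as ``the delicate point'' and ``the honest route,'' but you do not actually carry it out, and it is not a routine adaptation of \cite{BP08}.

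The paper avoids this obstacle entirely by an iteration that reduces $m\geq 2$ to the already proved $m=1$ case. Writing $mK_{Y/X}+L=K_{Y/X}+F$ with $F:=(m-1)K_{Y/X}+L$, one equips $F$ with the convex combination $\varphi_1=(1-\tfrac1m)\varphi_B+\tfrac1m\varphi_L$, where $\varphi_B$ is the weight of the relative $m$--Bergman kernel metric. This gives $\Theta_{h_1}(F)\geq 0$ and $\Theta_{h_1}(F)\wedge p^\star\omega_X^{n-1}\geq\tfrac{\ep_0}{m}\,p^\star\omega_X^n$, from the $\tfrac1m\Theta_{h_L}$ summand alone. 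Applying the $m=1$ result to $(K_{Y/X}+F,h_1)$ produces a new metric $h_{B,1}$ with the same two properties. Iterating -- feeding $h_{B,k}$ back into the convex combination -- improves the lower bound to $\tfrac1m\sum_{i=0}^{k-1}(1-\tfrac1m)^i\,\ep_0\,p^\star\omega_X^n$, which tends to $\ep_0\,p^\star\omega_X^n$. The limit metric $h$ satisfies \eqref{fiberversion}. Note that this bootstrap converges precisely because the conclusion carries the \emph{same} constant $\ep_0$ as the hypothesis; that sharpness is what makes the geometric series sum to one. This iteration is the key idea your proposal is missing, and it is what lets the paper sidestep any $L^{2/m}$ Bergman kernel calculus.
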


\begin{proof}
  We first show that this is true for $m=1$. In this case the metric $h$ in our statement \ref{bermganprop} is precisely the fiberwise Bergman metric, as we will now see.
\smallskip
  
\noindent Actually, this a consequence of the proof of Theorem \ref{thm01}. We follow the same approximation process for the map $p$ restricted to one of the balls
$B(x_i, \ep)$, on which the distortion between $\omega_X$ and the flat metric
in coordinates $(t_i)$ is $1+ \O(\delta_\ep)$. The inequality \eqref{reg14} applied for the sections $s_1 =s_2=\cdots =s_r =s$ shows
that the regularized version of \eqref{fiberversion} holds true. Indeed \eqref{reg14} holds true when restricted to the image of any section $s$, hence point-wise
by \cite[III Criteria 1.6]{Dem}.

\noindent In conclusion, the inequality 
\begin{equation}\label{nncurentpositive}
  \Theta_{h_B} (K_{Y/X} +L) \wedge p^\star \omega_X ^{n-1}\geq  \ep_0 p^\star \omega_X ^n
\end{equation}  
follows by taking the limit of several parameters involved.
  





\bigskip

\noindent If $m\geq 2$ we argue as follows. Let $h_B$ be the relative $m$-Bergman kernel metric on $m K_{Y/X} +L$. By our hypothesis, this is not identically $+\infty$. We define the bundle $F := (m-1) K_{Y/X} +L$. Then the local weights
\begin{equation}\label{rem1}
  \varphi_1 := \big(1-\frac{1}{m}) \varphi_B + \frac{1}{m} \varphi_L
\end{equation}
define a metric on $F$ satisfying
\begin{equation}\label{rem100}\Theta_{h_1} (F) \geq 0,\qquad \Theta_{h_1} (F) \wedge p^\star\omega_X^{n-1}\geq \frac{\ep_0}{m}p^\star\omega_X^{n} .
\end{equation}  
By applying the above $m=1$ case to $(K_{Y/X} +F, h_1)$, we obtain a Bergman type metric $h_{B,1}$ on $K_{Y/X} +F$ such that 
\begin{equation}\label{rem101}\Theta_{h_{B,1}} (K_{Y/X} +F) \geq 0, \qquad \Theta_{h_{B,1}} (K_{Y/X} +F) \wedge p^\star\omega_X^{n-1}\geq \frac{\ep_0}{m}p^\star\omega_X^{n}  .
\end{equation}  
\smallskip

\noindent We now iterate this process, namely we define the metric
\begin{equation}\label{rem2}
  \varphi_2 := \big(1-\frac{1}{m}) \varphi_{B,1} + \frac{1}{m} \varphi_L
\end{equation}
on $F$, and let $h_{B,2}$ be the relative Bergman kernel metric corresponding
to the data $(K_{Y/X} +F, h_2)$. Note that we have
\begin{equation}\label{rem3}
  \Theta_{h_2} (F) \wedge p^\star\omega_X^{n-1}\geq (\frac{1}{m}+\frac{1}{m} (1- \frac{1}{m})) \ep_0 p^\star\omega_X^{n},\end{equation}
we get thus $\displaystyle \Theta_{h_{B,2}} (K_{Y/X} +F) \geq 0$ and
\begin{equation}\label{rem4}
\Theta_{h_{B,2}} (K_{Y/X} +F) \wedge p^\star\omega_X^{n-1}\geq \Big(\frac{1}{m}+\frac{1}{m} (1- \frac{1}{m})\Big) \ep_0 p^\star\omega_X^{n}.
\end{equation}  
Then for every $k\in \mathbb{N}$, we obtain a metric $h_{B, k}$ on $K_{Y/X} +F$ such that $\displaystyle \Theta_{h_{B,k}} (K_{Y/X} +F) \geq 0$ together with
\begin{equation}\label{rem5}
\Theta_{h_{B,k}} (K_{Y/X} +F) \wedge p^\star\omega_X^{n-1}\geq \frac{1}{m} (\sum_{i=0}^{k-1} (1- \frac{1}{m})^i) \ep_0 p^\star\omega_X^{n}. \end{equation}
By normalization (and eventually taking a subsequence), $h_{B,k}$ converges to a metric denoted by $h$ on $K_{Y/X} +F = m K_{Y/X} + L$ which satisfies
\eqref{fiberversion}.   \end{proof}

\bigskip

\noindent We remark that although the limit metric $h$ does not exactly the $m$-Bergman kernel metric, but nevertheless it has the following important property.
Let $D$ be some divisor such that $p^\star (D) =\sum a_i E_i$. By the construction, for every $k\in \N$, we have  
\begin{equation}\label{rem6}
  \Theta_{h_{B,k}} (m K_{Y/X} +L) \geq \sum_i m (a_i -1)^+ [E_i]
\end{equation}  
Then the limit metric $h$ satisfies a similar inequality
\begin{equation}\label{rem7}\Theta_{h} (m K_{Y/X} +L) \geq \sum_i m (a_i -1)^+ [E_i]
  \end{equation}
as well as
\begin{equation}\label{nonreducontrol}
 (\Theta_{h} (m K_{Y/X} +L) - \sum_i m (a_i -1)^+ [E_i]) \wedge p^\star \omega_X ^{n-1} \geq \ep_0 \cdot p^\star \omega_X ^n .
 \end{equation}
\medskip

\noindent We also have the following version of Theorem \ref{bermganprop}.

\begin{cor}\label{coroberg}
Let $p:Y\to X$ be a holomorphic surjective and proper map, where $X$ and $Y$ are K\"ahler manifolds. 
Moreover, we assume that $p$ is locally projective. Let $L$ be a line bundle over $Y$ with a possible singular metric $h_L$ such that we have
\begin{equation}\label{cor130}  
\Theta_{h_L} (L)\geq -Cp^\star\omega_X,\qquad \Theta_{h_L}(L)\wedge p^\star\omega_X^{n-1}\geq 0
\end{equation}
where $\omega_X$ is a Hermitian metric on $X$, $n=\dim X$, and $C> 0$ is a positive real number. We assume that the space of fiberwise $\displaystyle L^{\frac{2}{m}}$ sections (with respect to $h_L$) of
$p_\star (m K_{Y/X} +L)$ is non zero. Then there exists a metric $h$ on the
bundle $m K_{Y/X} +L$ such that we have
\begin{equation}\label{fibversion}
\Theta_{h} (m K_{Y/X} +L) \wedge p^\star \omega_X ^{n-1} \geq 0
\end{equation}
in the sense of currents on $X$.  
\end{cor}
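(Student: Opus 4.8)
\noindent The plan is to deduce Corollary \ref{coroberg} from Theorem \ref{bermganprop} exactly the way Theorem \ref{thm01} was deduced from Theorem \ref{thm0}: by a local twist on a fine ball cover of $X$. Fix $\ep>0$ small and a cover $\big(B(x_i,\ep)\big)_i$ of $X$ by coordinate balls carrying flat metrics $\omega_i$ with $(1-\delta_\ep)\omega_i\le\omega_X\le(1+\delta_\ep)\omega_i$ and $\delta_\ep\to0$, as in \eqref{reg1}; let $t_i$ be the coordinates centred at $x_i$. On $Y_i:=p^{-1}\big(B(x_i,\ep)\big)$ we replace $h_L$ by
\[
h_{i,L}:=e^{-C(1+\delta_\ep)\,|t_i|^2\circ p}\,h_L .
\]
Then $\Theta_{h_{i,L}}(L)=\Theta_{h_L}(L)+C(1+\delta_\ep)\,p^\star\omega_i\ge0$ by \eqref{cor130} and \eqref{reg1}, and a short computation using the second inequality in \eqref{cor130}, the positivity $\Theta_{h_{i,L}}(L)\ge0$ just obtained, and \eqref{reg1} gives $\Theta_{h_{i,L}}(L)\wedge p^\star\omega_i^{\,n-1}\ge\ep_0'\,p^\star\omega_i^{\,n}$ with $\ep_0':=C\big(1-\cO(\delta_\ep)\big)$, which is $>0$ for $\ep$ small. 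The hypothesis that the space of fibrewise $L^{2/m}$ sections of $p_\star(mK_{Y/X}+L)$ is non zero is unaffected, since over the relatively compact ball the twisting factor is bounded away from $0$ and $\infty$.

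\noindent Applying Theorem \ref{bermganprop} to $p:Y_i\to B(x_i,\ep)$, the bundle $(L,h_{i,L})$, the metric $\omega_i$ and the constant $\ep_0'$, we obtain on each ball a metric $h_i$ on $mK_{Y/X}+L$ with
\[
\Theta_{h_i}(mK_{Y/X}+L)\wedge p^\star\omega_i^{\,n-1}\ge\ep_0'\,p^\star\omega_i^{\,n}
\]
in the sense of currents on $B(x_i,\ep)$, and with $\Theta_{h_i}(mK_{Y/X}+L)\ge0$ (this is the discussion following Theorem \ref{bermganprop}, i.e.\ \eqref{rem7} taken with $D=0$). The key observation is that the correcting weight $C(1+\delta_\ep)|t_i|^2$ is pulled back from the base, hence it passes through the fibrewise ($m$-)Bergman kernel construction — and through every step of the iteration used when $m\ge2$ — merely as an additive shift of the local weight. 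Consequently $e^{C(1+\delta_\ep)|t_i|^2\circ p}\,h_i$ agrees on overlaps with the object attached to $x_j$, and this family patches into a single global metric $h$ on $mK_{Y/X}+L$; concretely, $h$ is nothing but the metric furnished by the construction of Theorem \ref{bermganprop} applied directly to $(L,h_L)$, and it is in particular independent of $\ep$. For $m\ge2$ one chooses the subsequence and the normalisation appearing there uniformly over the cover.

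\noindent It remains to translate the ball-wise estimates into a global one. On $B(x_i,\ep)$ we have $\Theta_h(mK_{Y/X}+L)=\Theta_{h_i}(mK_{Y/X}+L)-C(1+\delta_\ep)\,p^\star\omega_i$, so the second display gives $\Theta_h(mK_{Y/X}+L)\wedge p^\star\omega_i^{\,n-1}\ge\big(\ep_0'-C(1+\delta_\ep)\big)p^\star\omega_i^{\,n}=-\cO(\delta_\ep)\,p^\star\omega_i^{\,n}$, while $\Theta_{h_i}(mK_{Y/X}+L)\ge0$ yields $\Theta_h(mK_{Y/X}+L)\ge-C'\,p^\star\omega_X$ for some $C'>0$. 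Writing $\Theta_h(mK_{Y/X}+L)+C'p^\star\omega_X$ as a positive current and using \eqref{reg1} to exchange $\omega_i$ for $\omega_X$ at the cost of a further $\cO(\delta_\ep)$, these combine into
\[
\Theta_h(mK_{Y/X}+L)\wedge p^\star\omega_X^{\,n-1}\ge-\cO(\delta_\ep)\,p^\star\omega_X^{\,n}
\]
on each $B(x_i,\ep)$, hence — the cover being finite and the inequality being local — on all of $X$. Since $h$ is independent of $\ep$, letting $\ep\to0$ yields \eqref{fibversion}.

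\noindent I expect the main obstacle to be this gluing step: verifying that the metrics produced ball by ball by Theorem \ref{bermganprop} genuinely assemble, once the explicit base-twist has been removed, into a single global metric on $mK_{Y/X}+L$, and that the curvature bounds — stated with respect to the flat model metrics $\omega_i$ — recombine, through the distortion \eqref{reg1}, into an estimate with respect to $\omega_X$ that survives the limit $\ep\to0$. The compatibility is forced by the fact that the twisting function $C(1+\delta_\ep)|t_i|^2$ is pulled back from $X$, so that it commutes with the (iterated) Bergman construction; the only genuine extra care is needed in the case $m\ge2$, where the subsequence and the normalisation in Theorem \ref{bermganprop} must be fixed uniformly over the cover.
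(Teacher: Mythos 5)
Your proposal is correct and takes exactly the route the paper indicates (the paper itself only says the corollary follows ``in the same way we have obtained Theorem \ref{thm01} as consequence of Theorem \ref{thm0}'' and gives no details). Your fleshing out of the gluing step — in particular the observation that a base-pulled-back twist passes through the $m$-Bergman kernel and the whole iteration as a pure shift of the weight, so the locally constructed metrics differ from a single global metric only by the explicit twist — is precisely the content implicit in the paper's one-line proof.
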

\noindent The proof of this statement follows in the same way we have obtained
Theorem \ref{thm01} as consequence of Theorem \ref{thm0}, so we provide no further explanations about it.
\smallskip

\noindent
We note that as a direct consequence of the arguments we use in the proof of
\ref{bermganprop}, we obtain the existence of a sequence of $m$-Bergman metrics on
$m K_{Y/X} +L$ such that
\begin{equation}\label{rem8}
\Theta_{h_{B,k}} (K_{Y/X} +F) \wedge p^\star\omega_X^{n-1}\geq -\delta_kp^\star\omega_X^{n}
\end{equation}
where the sequence $(\delta_k)$ converges to zero.
\medskip

\noindent Thanks to Corollary \ref{coroberg}, we can prove the following variant of \cite[Thm 3.4]{CP17} (which is using the fundamental contributions of Viehweg, Tsuji... \cite{V95, T10} ). 
\begin{cor}
Let $p:Y\to X$ be a holomorphic surjective map between two K\"ahler manifolds. 
We assume that $p$ is locally projective. Let $\Sigma \subset X$ be the singular locus of $p$ and we assume that both $\Sigma$ and $p^{-1} (\Sigma)$ are normal crossing. 
Let $(L, h_L)$ be a hermitian line bundle over $X$ such that
\begin{equation}\label{corr03}  
\Theta_{h_L}(L)\geq -C p^\star\omega_X, \qquad \Theta_{h_L}(L)\wedge p^\star\omega_X^{n-1}\geq 0
\end{equation}
where $\omega_X$ is a K\"ahler metric on $X$, $n=\dim X$, and $C> 0$ is a positive real number.
If $\cI (h_L |_{Y_x}) = \O_{Y_x}$ for a generic $x\in X$ and $\cE := p_\star (K_{Y/X} +L)$ is non zero, then there exists a divisor $F$ in $Y$ satisfying $\codim_X p_\star (F) \geq 2$, such that
\begin{equation}\label{sh133}
\int_Y c_1\left(K_{Y/X}+ L +F - \delta_0p^\star(\det \cE)\right)\wedge p^\star\omega_X ^{n-1}\geq 0.
\end{equation}
for some positive $\delta_0$.   
\end{cor}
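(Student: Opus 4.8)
Set $r:=\rk\cE$. We follow the scheme of \cite[Thm 3.4]{CP17}, with Corollary \ref{coroberg} and the divisorial refinement \eqref{nonreducontrol} in the role of the positivity input. First we pass to the $r$-fold fibered power $p_r\colon Y_r:=Y\times_X\cdots\times_X Y\to X$ and to a log-resolution $\mu\colon\widehat Y\to Y_r$ with composed map $q:=p_r\circ\mu$; since $\Sigma$ and $p^{-1}(\Sigma)$ are normal crossing we may arrange $K_{\widehat Y/X}=\mu^\star K_{Y_r/X}+R$ with $R$ effective, $\mu$-exceptional, and $\codim_X q_\star R\ge 2$. We equip $L_r:=\boxtimes_i\pr_i^\star L$ with $\boxtimes h_L$ and pull back to $\widehat Y$; as each $\pr_i$ is a morphism over $X$ the bounds \eqref{cor130} are inherited, $\Theta(L_r)\ge -rC\,q^\star\omega_X$ and $\Theta(L_r)\wedge q^\star\omega_X^{n-1}\ge 0$, and $\cI(h_{L_r}|_{\widehat Y_x})=\cO$ for generic $x$. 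Over $X\setminus\Sigma$ one has $q_\star(K_{\widehat Y/X}+\mu^\star L_r)=\cE^{\otimes r}$, and Viehweg's determinant trick — composing $q^\star\det\cE\hookrightarrow q^\star\cE^{\otimes r}$ with the evaluation morphism — produces an effective divisor $G$ on $\widehat Y$ with $\codim_X q_\star G\ge 2$ and $K_{\widehat Y/X}+\mu^\star L_r\equiv q^\star\det\cE+G$.

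Next we apply Corollary \ref{coroberg} with $m=1$ to $(K_{\widehat Y/X}+\mu^\star L_r,\ \boxtimes h_L)$ — the hypothesis $\cE\neq 0$ guarantees that the space of fibrewise sections is nonzero. This gives a metric $h$ on $K_{\widehat Y/X}+\mu^\star L_r$ with $\Theta_h\wedge q^\star\omega_X^{n-1}\ge 0$, and, by the Remark following Theorem \ref{bermganprop}, the sharper estimate \eqref{nonreducontrol} together with a divisorial lower bound $\Theta_h\ge\sum_i(a_i-1)^+[E_i]$ along $q^{-1}(\Sigma)$. Comparing $h$ with the $L^2$ metric $\det h$ on $\det\cE$ through the section $s_G$ of $\cO(G)=(K_{\widehat Y/X}+\mu^\star L_r)\otimes q^\star(\det\cE)^{-1}$ — and using, as in the work of Viehweg and Tsuji \cite{V95, T10}, that the fibrewise Bergman metric dominates a fixed positive multiple of the pull-back of $\det h$ — we obtain, for a small $\delta_0>0$,
$$\bigl(\Theta_h(K_{\widehat Y/X}+\mu^\star L_r)+[G]+[R]-\delta_0\,q^\star\Theta_{\det h}(\det\cE)\bigr)\wedge q^\star\omega_X^{n-1}\ \ge\ 0$$
as currents on $\widehat Y$. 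Finally we descend to $Y$: writing $q=p\circ\pr_1\circ\mu$ and using $\pr_{1\star}(K_{Y_r/Y}+L_r)=L\otimes p^\star(\cE^{\otimes(r-1)})$, which is weakly positive by Viehweg and hence contributes a nonnegative term, the inequality above unwinds to \eqref{sh133}, with $F$ the effective divisor obtained by pushing down $R$, $G$ and the Bergman exceptional part — all supported over a subset of codimension $\ge 2$, so $\codim_X p_\star F\ge 2$.

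The step we expect to be the main obstacle is the passage from $\Theta_h\wedge q^\star\omega_X^{n-1}\ge 0$ to the honest domination by $\delta_0\,q^\star\Theta_{\det h}(\det\cE)$ on the right: the section $s_G$ only controls the \emph{difference} of the two curvature currents, not its sign, so one genuinely needs the Viehweg--Tsuji comparison between the fibrewise Bergman metric and the $L^2$ metric on $\det\cE$ (this is exactly where \cite{V95, T10} enter). A second delicate point is checking that each divisor absorbed into $F$ — the discrepancy $R$, the Viehweg divisor $G$, and the exceptional part of the $m$-Bergman metrics — lies over a locus of codimension $\ge 2$ in $X$; this is the role of the normal-crossing hypotheses on $\Sigma$ and $p^{-1}(\Sigma)$.
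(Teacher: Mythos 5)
Your opening moves — passing to the $r$-fold fibered power, desingularizing, and invoking the determinant morphism $\det\cE\hookrightarrow\cE^{\otimes r}$ — match the paper's strategy, and your instinct to cite Corollary \ref{coroberg} and the divisorial refinement \eqref{nonreducontrol} is correct. But the proposal has several genuine gaps.

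\textbf{The $m=1$ Bergman step does not apply as stated.} You apply Corollary \ref{coroberg} with $m=1$ to the bundle $K_{\widehat Y/X}+\mu^\star L_r$ with the box metric, and only afterwards bring in the section $s_G$. The paper instead builds the determinant section $s$ into the metric from the outset, setting $F:=mL^{(r)}+(K_{Y^{(r)}/X}+L^{(r)}-(p^{(r)})^\star\det\cE+E_1+E_2)$ with weight $h_F:=e^{-\log|s|}h_{L^{(r)}}^{\otimes m}$, and then takes the $m$-relative Bergman metric on $mK_{Y^{(r)}/X}+F$. The reason $m$ must be chosen large (not $m=1$) is to guarantee $\cI(h_F^{1/m}|_{Y^{(r)}_x})=\cO_{Y^{(r)}_x}$ on a generic fiber — the $L^{(r)}$ factor must be raised to a high power to absorb the singularities of $\Div(s)$, otherwise the hypothesis of Corollary \ref{coroberg} (nonvanishing of the fibrewise $L^{2/m}$ space) fails. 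Your version has no mechanism for this.

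\textbf{The ``Viehweg--Tsuji comparison'' step is a placeholder, and the descent is wrong.} You acknowledge that passing from $\Theta_h\wedge q^\star\omega_X^{n-1}\ge 0$ to domination by $\delta_0\,q^\star\Theta_{\det h}(\det\cE)$ is the obstacle, and propose to fill it with an unspecified comparison of the Bergman metric with the $L^2$ metric on $\det\cE$. This is not what is needed: in the paper no such metric comparison is made. Instead, one pulls the curvature current $\pi^\star\Theta_{h_k}$ back along the \emph{fibrewise diagonal} $\pi:Y\to Y^{(r)}$ (which is legitimate precisely because $\Theta_{h_k}$ is quasi-psh and its pullback is not identically $-\infty$), and then reads off the cohomology class $\pi^\star c_1(mK_{Y^{(r)}/X}+F)=c_1\big((mr+r)(K_{Y/X}+L)-\pi^\star\det\cE+E_3+F\big)$. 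The coefficient $\delta_0=1/(mr+r)$ appears from this identity, not from any extremal comparison of metrics. Your proposed descent via $\pr_{1\star}(K_{Y_r/Y}+L_r)=L\otimes p^\star(\cE^{\otimes(r-1)})$ is a pushforward, not a restriction, and does not produce the needed inequality on $Y$.

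\textbf{The divisors over $\Sigma$ do not lie over a codimension-two locus, and your final sentence is false.} You claim every divisor absorbed into $F$ lies over a subset of $X$ of codimension $\ge 2$. But the multiple-fibre components $V_i$ with $a_i\ge 2$ sit over \emph{divisors} of $\Sigma$. The discrepancy $E_3$ along the diagonal is supported on $\sum V_i$ and cannot be swept into the codimension-two bucket. The paper handles this separately: the divisorial refinement \eqref{nonreducontrol} gives exactly the bound
$$\sum_i\int_Y c_1(V_i)\wedge p^\star\omega_X^{n-1}\ \le\ \int_Y c_1(K_{Y/X}+L)\wedge p^\star\omega_X^{n-1},$$
and it is this inequality that absorbs the $E_3$-contribution into the $(mr+r)(K_{Y/X}+L)$ term. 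Without this step the conclusion \eqref{sh133}, with $F$ lying over codimension $\ge 2$, cannot be reached.
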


\begin{proof}
The proof is a linear combination of \cite[Thm 3.4]{CP17} with the arguments above. We sketch the proof here for the convenience of readers.

By hypothesis, $p^\star \Sigma$ is normal crossing, and it can be written as
$$p^\star \Sigma = \sum W_i + \sum a_i V_i ,$$
where $ \sum W_i  + \sum V_i $ is snc and $a_i \geq 2$. 
Let $r$ be the rank of $\cE$. Let $Y^r$ be the fiberwise product of $p$, and let $\pr_i: Y^r \rightarrow Y$ be the $i$-directional projection.
Let $\varphi: Y^{(r)} \rightarrow Y^r$ be a desingularisation, and $p^{(r)} : Y^{(r)} \rightarrow X$ be the natural morphism. 
We set $L^{(r)}  := \varphi^\star (\sum_i \pr_i ^\star L)$. 
\smallskip

\noindent We have 
the canonical morphism
$\det \cE \rightarrow \otimes^r \cE$ over the locally free locus of $p$. This induces a section 
$$s\in H^0 (Y^{(r)},   K_{Y^{(r)} /Y} +L^{(r)} - (p^{(r)})^\star\det \cE + E_1 +E_2) ,$$
where $E_1$ and $E_2$ are effective divisors such that $\codim_Y  p^{(r)} _\star (E_2)\geq 2$ and 
$$E_1\leq C \sum_{i,k} (\varphi\circ\pr_i)^\star V_k $$
for some constant $C$. 
Let $m\in\mathbb{N}$ sufficiently large. We consider the line bundle
$$F:=  m L^{(r)} +  (K_{Y^{(r)} /X} +L^{(r)} -(p^{(r)})^\star\det \cE + E_1 +E_2) $$
with the metric $h_F := e^{-\log |s|}h_{L^{(r)}}^{\otimes m}.$
Let $h_k$ be the $m$-relative Bergman kernel metric on $m K_{Y^{(r)} /F} + F$
constructed in \eqref{rem8} with respect to $h_F$.
They satisfy the inequality
\begin{equation}\label{applcorr}
  \Theta_{h_k} (m K_{Y^{(r)} /X} + F)\wedge (p^{(r)})^\star \omega_X ^{n-1} \geq
-\delta_k(p^{(r)})^\star \omega_X ^{n} 
\end{equation}
on $Y^{(r)}$.

Let $\pi: Y \rightarrow Y^{(r)}$ be the fiberwise diagonal embedding.
Note that if $\cI (h_L) =\mathcal{O}_X$ and if $m$ large enough
with respect to $\Div (s)$, we have
$$\mathcal{I} \big(h_F^{\frac{1}{m}} |_{Y^{(r)} _x}\big) = \O_{Y^{(r)} _x}$$
for a generic $x\in X$.
As a consequence, $\pi^\star h_k$ \emph{is not} identically $+\infty$. Therefore 
$\pi^\star \Theta_{h_k} (m K_{Y^{(r)} /X} + F)$ is well defined (thus quasi-psh) on $Y$. 
We have
$$\pi^\star (\Theta_{h_k}) \wedge p^\star \omega_X^{n-1} = \pi^\star \left(\Theta_{h_k} \wedge (p^r)^\star  \omega_X^{n-1}\right) \geq 0 \qquad\text{on } Y,$$
as a consequence of \eqref{applcorr}.
Note that $\pi^\star (\Theta_{h_k})$ belongs to the class of 
$$\pi^\star (c_1 (m K_{Y^{(r)} /X} + F)) = c_1 ((mr+r) ( K_{Y/X} +L) -\pi^\star \det \cE +E_3 +F) ,$$
where $E_3$ is supported $\sum_i V_i$ and $\codim_X p_\star (F) \geq 2$.
Therefore we have
$$\int_Y c_1 ((mr+r) ( K_{Y/X} +L) -\pi^\star \det \cE +E_3 +F) \wedge p^\star
\omega_X^{n-1} \geq -\delta_k p^\star
\omega_X^{n}.$$
Finally, thanks to \eqref{nonreducontrol}, we know that 
$$\sum_i \int_Y  c_1 (V_i) \wedge p^\star \omega_X ^{n-1}  \leq  \int_Y c_1 (K_{Y/X} +L) \wedge p^\star \omega_X ^{n-1}$$
Therefore \eqref{sh133} is proved as $k\to \infty$.
\end{proof}

\end{document}